\renewcommand{\leq}{\leqslant}
\renewcommand{\geq}{\geqslant}
\newcommand{\ptl}{\partial}
\newcommand{\rr}{{\mathbb{R}}}
\newcommand{\la}{\lambda}
\newcommand{\nn}{{\mathbb{N}}}
\newcommand{\sub}{\subset}
\newcommand{\subeq}{\subseteq}
\newcommand{\escpr}[1]{\big<#1\big>}
\newcommand{\sg}{\sigma}
\newcommand{\Om}{\Omega}
\newcommand{\eps}{\varepsilon}
\newcommand{\var}{\varphi}
\DeclareMathOperator{\divv}{div}
\newtheorem{theorem}{Theorem}[section]
\newtheorem{corollary}[theorem]{Corollary}
\renewcommand{\thetheoremName}
\theoremstyle{definition}
\newtheorem{definition}[theorem]{Definition}
\newtheorem{remark}[theorem]{Remark}
\newtheorem{example}[theorem]{Example}
\theoremstyle{remark}
\numberwithin{equation}{section}
\newcommand{\Hess}{\operatorname{Hess}}
\newcommand{\Vol}{\operatorname{Vol}}
\newcommand{\C}{\operatorname{Cap}}
\newcommand{\kam}{\mathbb{K}^{m}(b)}
\numberwithin{equation}{section}
\newcommand{\ovh}{\overline{H}_P}
\begin{document}
\title[Comparison results in weighted manifolds]
{Intrinsic and extrinsic comparison results for isoperimetric quotients and capacities \\ in weighted manifolds}

\author[A. Hurtado]{A. Hurtado$^{\natural}$}
\address{Departamento de Geometr\'{\i}a y Topolog\'{\i}a and Excellence Research Unit ``Modeling Nature'' (MNat), Universidad de Granada, E-18071,
Spain.}
 \email{ahurtado@ugr.es}
 
\author[V. Palmer]{V. Palmer*}
\address{Departament de Matem\`{a}tiques, Universitat Jaume I, Castell\'o,
Spain.} \email{palmer@mat.uji.es}

\author[C. Rosales]{C. Rosales$^{\#}$}
\address{Departamento de Geometr\'{\i}a y Topolog\'{\i}a and Excellence Research Unit ``Modeling Nature'' (MNat) Universidad de Granada, E-18071,
Spain.} 
\email{crosales@ugr.es}

\date{\today}

\thanks{* Supported by MINECO grant MTM2017-84851-C2-2-P and UJI grant UJI-B2018-35. \\
\indent $^{\natural}$ $^{\#}$ Supported by MINECO grant MTM2017-84851-C2-1-P and Junta de Andaluc\'{\i}a grant FQM325.}

\subjclass[2010]{31C12, 53C42, 35J25}

\keywords{Weighted manifolds, Laplacian, isoperimetric quotient, capacity, parabolicity, submanifolds, extrinsic balls}

\begin{abstract}
Let $(M,g)$ be a complete non-compact Riemannian manifold together with a function $e^h$, which weights the Hausdorff measures associated to the Riemannian metric. In this work we assume lower or upper radial bounds on some weighted or unweighted curvatures of $M$ to deduce comparisons for the weighted isoperimetric quotient and the weighted capacity of metric balls in $M$ centered at a point $o\in M$. As a consequence, we obtain parabolicity and hyperbolicity criteria for weighted manifolds generalizing previous ones. A basic tool in our study is the analysis of the weighted Laplacian of the distance function from $o$. The technique extends to non-compact submanifolds properly immersed in $M$ under certain control on their weighted mean curvature.
\end{abstract}

\maketitle
\thispagestyle{empty}

\section{Introduction}
\label{secIntro}

A \emph{weighted manifold} (also known as a \emph{manifold with density} or \emph{smooth metric measure space}) is a triple $(M,g,e^h)$, where $(M,g)$ is a Riemannian manifold and $e^h$ a smooth function used to weight the Hausdorff measures associated to the Riemannian distance. In these manifolds, by combining the Riemannian structure with the derivatives of $h$, it is possible to introduce weighted curvatures and differential operators, see Morgan's book~\cite[Ch.~18]{gmt} and Section~\ref{secPrelim} for precise definitions. Hence, the framework of weighted manifolds provides an extension of Riemannian geometry where many classical questions are being analyzed in recent years. In particular, comparison geometry and topological obstructions for the different weighted curvatures have been considered by many authors, see \cite{qian2,qian,lott,MR2243959,morganmyers,WW,MR2567278,MRS,MR3197658,W,MR3641482,MR3897040} but this list is far from exhaustive.

Our aim in this paper is to establish comparison results for the weighted isoperimetric quotients and capacities of intrinsic and extrinsic balls in a complete non-compact weighted manifold. These will be derived from lower or upper radial (maybe non-constant) bounds on some of the curvatures of the manifold, in such a way that they become equalities in the corresponding comparison model. The \emph{weighted model spaces} are defined in Section~\ref{secModel} as rotationally symmetric manifolds with a pole together with a weight which is radial, i.e., it only depends on the Riemannian distance from the pole. Our capacity inequalities will allow to deduce parabolicity (resp. hyperbolicity) criteria for weighted manifolds and their proper submanifolds from the parabolicity (resp. hyperbolicity) of the associated comparison models. 

The starting point for our results is the analysis of $\Delta^h r$, where $\Delta^h$ is the \emph{weighted Laplacian} defined in \eqref{eq:flaplacian} and $r$ denotes the distance function in $M$ from a fixed point $o\in M$. This is accomplished in Section~\ref{distanceanalysis} by means of a standard method in Riemannian geometry \cite[Ch.~2]{GreW}, \cite{Zhu}. From inequality \eqref{eq:seque}, which relates the Hessian $\text{Hess}\,r$ and the sectional curvatures in $(M,g)$, we can control $\Delta^hr$ by assuming radial lower bounds on two kinds of weighted curvatures: the  \emph{Bakry-\'Emery Ricci curvatures} $\text{Ric}^h_q$ and the \emph{weighted sectional curvatures} $\text{Sec}^h_q$ with $q\in(0,\infty]$, see Section~\ref{subsec:wc} for precise definitions and references. Estimates for $\Delta^h r$ involving lower bounds on $\text{Ric}^h_q$ have appeared in many of the aforementioned works by following the same technique or by using a Bochner-Weitzenb\"ock formula in weighted manifolds. A similar approach was followed in \cite{W,MR3641482} to extend classical Riemannian statements to the case $\text{Sec}^h_q>0$. Our more general comparisons for $\text{Hess}\,r$ in Theorem~\ref{th:hessianineqr} depending on lower radial bounds on the sectional curvatures $\text{Sec}^h_q$ seem to be new.

Having the Laplacian inequalities for the distance in hand, we are ready to establish our main comparisons in a unified way. For more clarity we have divided the exposition into two sections where we treat separately the intrinsic case and the extrinsic one. 

Section~\ref{intcompan} is devoted to the intrinsic setting and contains two type of results. The first ones are estimates for isoperimetric quotients of a complete non-compact weighted manifold $(M,g,e^h)$. Given a point $o\in M$, the \emph{weighted isoperimetric quotient} $q_o(R)$ measures the ratio between the weighted volume of the open metric ball $B_R$ of radius $R$ centered at $o$ and the weighted area of $\ptl B_R$. In Theorems~\ref{th:isoper_infty}, \ref{volumeRiem1} and \ref{th:isoper} we compare $q_o(R)$ with the weighted isoperimetric quotient at the pole of a weighted model space, which is determined from eventual bounds on $\escpr{\nabla h,\nabla r}$ and radial lower bounds on weighted or unweighted Ricci curvatures of $(M,g,e^h)$. In Theorem~\ref{volumeRiem2} we show the opposite comparison under an upper bound on the Riemannian sectional curvatures. For the proofs we adapt to the weighted context the arguments employed in \cite{Pa,MaP1,MP5} for submanifolds in Riemannian manifolds with a pole. The main idea is to use the previous analysis of $\Delta^h r$ to compare the mean exit time function in $B_R$ with the function defined by transplanting to $B_R$, via the distance function $r$, the mean exit time function for the ball of the same radius in the weighted model space. As a consequence of our estimates for $q_o(R)$ we can compare separately the weighted volumes and areas of metric balls and spheres in $(M,g,e^h)$ with the ones in the corresponding model. Similar inequalities for weighted volumes and quotients of weighted volumes, but not for weighted isoperimetric quotients, were given in \cite{qian2,lott,morganmyers,WW,PRRS,MR3197658} under a lower bound on $\text{Ric}^h_\infty$, and in \cite{qian,MR2243959,MRS} under a lower bound on $\text{Ric}^h_q$ with $q\in(0,\infty)$. 

In Section~\ref{intcompan} we also establish comparisons for the capacities of metric balls, and deduce from them parabolicity and hyperbolicity criteria. To explain this in more detail we need to introduce some notation and definitions.

Following classical terminology in potential theory, a weighted manifold $(M,g,e^h)$ is \emph{weighted parabolic} or \emph{$h$-parabolic} if every function $u\in C^\infty(M)$ bounded from above and satisfying $\Delta^hu\geq 0$ must be constant. Otherwise, $(M,g,e^h)$ is \emph{weighted hyperbolic}. As in the unweighted setting, the $h$-parabolicity is characterized in terms of the \emph{$h$-capacities} defined in Section~\ref{sec:wcap}. More precisely, by Theorem~\ref{theorGrig}, the $h$-parabolicity is equivalent to the existence of a precompact open set $D\subseteq M$, and an exhaustion $\{\Om_i\}_{i\in\nn}$ of $M$ by smooth precompact open sets, such that $\lim_{i\to\infty}\C^h(D,\Om_i)=0$. When $\ptl D$ is smooth the value of $\text{Cap}^h(D,\Om_i)$ can be computed from equality \eqref{eq:capint} involving the \emph{$h$-capacity potential}, which is the solution to the weighted Laplace equation with Dirichlet boundary condition in \eqref{laplace}. This characterization leads to parabolicity and hyperbolicity criteria when we are able to bound the capacities $\C^h(D,\Om_i)$ for a suitable family of capacitors $(D,\Om_i)$ in terms of the geometry of the underlying weighted manifold. Following this idea with the capacitors $(B_\rho,B_R)$, the $h$-parabolicity of a weighted manifold comes from suitable growth properties of the weighted volume and boundary area of the balls $B_t$, see for instance \cite{grigoryan2,gri-masa}. In the case of a weighted model $(M_w,g_w,e^{f(r)})$, an explicit computation of $\C^f(B^w_\rho,B^w_R)$, where $B^w_t$ denotes a metric ball centered at the pole, shows that the $f$-parabolicity is equivalent to that $\int_\rho^\infty\Vol_f^{-1}(\ptl B^w_t)\,dt=\infty$ for some $\rho>0$, see \cite{grigoryan2,HPR1}. This is a weighted extension of a classical parabolicity criterion of Ahlfors for rotationally symmetric Riemannian manifolds \cite{Ahl,grigoryan}.

In Theorems~\ref{comp:inftycapacity}, \ref{parabolicity-h''} and \ref{comp:qcapacity} we prove, for almost every radii $\rho<R$, a comparison for the ratio between the capacity $\C^h(B_\rho,B_R)$ and the weighted area of $\ptl B_\rho$ with respect to the same ratio in a model space with weight depending on an eventual bound on $\escpr{\nabla h,\nabla r}$, and with curvature determined from a lower bound on a weighted or unweighted Ricci curvature in $(M,g,e^h)$. In Theorem \ref{hyperbolicity-h''} we deduce the opposite comparison from an upper bound on the Riemannian sectional curvature. The proofs employ the analysis of $\Delta^h r$ to compare the $h$-capacity potential of $(B_\rho,B_R)$ with the function obtained by transplanting to the annulus $B_R-\overline{B}_\rho$, via the distance function $r$, the capacity potential of the capacitor $(B^w_\rho,B^w_R)$ in the weighted model. By passing to the limit when $R\to\infty$ the resulting capacity estimates imply the $h$-parabolicity (resp. $h$-hyperbolicity) of $(M,g,e^h)$ from the parabolicity (resp. hyperbolicity) of the corresponding model. In particular, we generalize to weighted manifolds a parabolicity (resp. hyperbolicity) criterion of Ichihara~\cite{I1} for Riemannian manifolds with Ricci curvature bounded from below (resp. sectional curvature bounded from above). Also, by combining these capacity estimates with our previous inequalities for the weighted area of metric spheres, we achieve a direct comparison between $\C^h(B_\rho)$ and $\C^f(B^w_\rho)$.

The proof technique in these results can also be employed to study the parabolicity of a Riemannian manifold $(M,g)$. It is easy to observe that the $h$-parabolicity of $(M,g,e^h)$ does not imply the parabolicity of $(M,g)$. For instance, Euclidean space $\rr^m$ is hyperbolic for $m\geq 3$, whereas it is parabolic with respect to the Gaussian weight. In this sense, it is interesting to provide sufficient conditions on a weight $e^h$ ensuring the Riemannian parabolicity of $(M,g)$. In Theorem~\ref{comp:capacity} we give a criterion in this line which involves a lower bound on some Bakry-\'Emery Ricci curvature $\text{Ric}^h_q$ with $q\in(0,\infty)$.

In Section~\ref{extrinsic} we gather our comparisons in the extrinsic setting. This arises when a submanifold $P$ immersed in the manifold $M$ is endowed with the structure of weighted manifold inherited from the Riemannian metric and weight in $(M,g,e^h)$. More precisely, we suppose that $M$ has a pole and $P$ is a non-compact submanifold with empty boundary properly immersed in $M$. By following similar arguments as in the intrinsic case we deduce, from the weighted geometry of $M$ and the extrinsic weighted geometry of $P$, sharp estimates for the weighted isoperimetric quotients and capacities of extrinsic balls. By an \emph{extrinsic ball} in $P$ we mean any connected component in the intersection of $P$ with the metric ball $B_R$ in $M$ centered at the pole.

Our results here rely on the analysis of the \emph{extrinsic distance} $r_{|P}$ developed in Section~\ref{extLapcomp}. In this situation, the Hessian in $P$ depends on the Hessian in $M$ and the second fundamental form of $P$. As in the Riemannian setting, see for instance \cite{Pa2}, we may assume suitable bounds on weighted or unweighted sectional curvatures in $(M,g,e^h)$ to control the weighted Laplacian $\Delta^h_P$ of radial functions in $P$ by means of our previous analysis on $\text{Hess}\,r$, and from radial bounds on the term $\escpr{\nabla h+\ovh^h,\nabla r}$, which involves the radial derivatives of $h$ and the \emph{weighted mean curvature vector} $\ovh^h$ defined in Section~\ref{submanifolds}. In the particular case of weighted rotationally symmetric manifolds with a pole, the sectional curvature is a radial function from the pole, and the Laplacian $\Delta^h_P$ can be explicitly computed as we did in \cite{HPR1}.

In Theorem~\ref{th:simpson} we show how a radial bound on the sectional curvature $\text{Sec}$ in $(M,g)$ implies an inequality for the weighted isoperimetric quotient of extrinsic balls in $P$. Under a lower bound on $\text{Sec}$ we generalize a result of Markvorsen and the second author in the Riemannian setting~\cite{MP5}. Under an upper bound on $\text{Sec}$, our inequality provides a genuine comparison with respect to the isoperimetric quotient of a weighted model space, thus extending a result of the second author~\cite{Pa} for minimal submanifolds of Cartan-Hadamard manifolds. 

In Theorems~\ref{parabolicity-sub-h''} and \ref{parabolicity-sub-gen-q}, we consider capacitors associated to concentric extrinsic balls and estimate their capacities to obtain, by means of Theorem~\ref{theorGrig}, parabolicity and hyperbolicity criteria for submanifolds under certain control on the weighted or unweighted sectional curvatures of the ambient manifold, and the weighted mean curvature vector of the submanifold. As in the intrinsic setting, we can conclude the $h$-parabolicity (resp. $h$-hyperbolicity) of $P$ from the parabolicity (resp. hyperbolicity) of the associated comparison model. Our Theorem~\ref{parabolicity-sub-h''} extends to arbitrary weighted manifolds previous criteria of the authors \cite{HPR1} for rotationally symmetric manifolds with weights. In the unweighted case $h=0$ we recover previous statements by Esteve and the second author~\cite{esteve-palmer}, and by Markvorsen and the second author~\cite{MP3}. In Corollary~\ref{hiperbolicity-MP} we show a weighted version of the hyperbolicity result of Markvorsen and the second author~\cite{MP2} for minimal submanifolds of a Cartan-Hadamard manifold.

To finish this introduction we should mention that, due to the relation between the weighted and the unweighted curvatures, all the results of the paper depending on a certain bound on the Ricci curvature $\text{Ric}$ or the sectional curvature $\text{Sec}$ are valid when we assume the same bound on $\text{Ric}^h_\infty$ or $\text{Sec}^h_\infty$ together with an additional concavity or convexity condition on $h$. 

The paper is organized into five sections. Section~\ref{secPrelim} contains some preliminary material about weighted manifolds, potential theory and weighted model spaces. In Section~\ref{distanceanalysis} we derive estimates for the weighted Laplacian of the distance function in terms of lower bounds on the weighted curvatures of the manifold. In Section~\ref{intcompan} we prove our comparison results and our parabolicity / hyperbolicity criteria for weighted manifolds. Section~\ref{extrinsic} includes the analysis of the weighted Laplacian and the comparisons for submanifolds with controlled weighted mean curvature vector.

\section{Preliminaries}
\label{secPrelim}

In this section we recall some notions and results that will be
instrumental in the sequel.

\subsection{Weighted capacities and parabolicity} 
\label{sec:wcap}\

In a complete Riemannian manifold $(M^m,g)$ with $m\geq 2$ and $\ptl M=\emptyset$ we consider a {\em weight} or {\em density}, i.e., a smooth positive function $e^h$ on $M$ which is used to weight the Hausdorff measures associated to the Riemannian metric. In particular, for any Borel set $E\subeq M$, and any $C^1$ hypersurface $P\sub M$, the \emph{weighted volume} of $E$ and the \emph{weighted area} of $P$ are given by
\begin{equation*}
\Vol_h(E):=\int_E dv_h=\int_E e^h\,dv,\quad \Vol_h(P):=\int_P da_h=\int_P e^h\,da,
\end{equation*}
where $dv$ and $da$ denote the Riemannian elements of volume and area, respectively.

In weighted manifolds there are generalizations not only of volume and area, but also of some differential operators of Riemannian manifolds. The \emph{weighted divergence} of a smooth vector field $X$ on $M$ is the function
\[
\divv^hX:=\divv X+\escpr{\nabla h,X},
\]
where $\divv$ is the Riemannian divergence, $\nabla$ is the Riemannian gradient and $\escpr{\cdot\,,\cdot}$ denotes the Riemannian metric in $M$. Following \cite[Sect.~3.6]{gri-book} we define the \emph{weighted Laplacian} or \emph{$h$-Laplacian} of a function $u\in C^2(M)$ by
\begin{equation}
\label{eq:flaplacian}
\Delta^hu:=\divv^h\nabla u=\Delta u+\escpr{\nabla h,\nabla u},
\end{equation}
where $\Delta$ is the Laplacian in $(M,g)$. The $h$-Laplacian is a second order linear operator, which is self-adjoint with respect to $dv_h$ since
\[
\int_Mu\,\Delta^hw\,dv_h=\int_Mw\,\Delta^hu\,dv_h,
\]
for any two functions $u,w\in C^2_0(M)$.

Given a domain (connected open set) $\Om$ in $M$, a function $u\in C^2(\Om)$ is \emph{$h$-harmonic} (resp. \emph{$h$-subharmonic}) if $\Delta^h u=0$ (resp. $\Delta^h u\geq 0$) on $\Om$. As in the unweighted setting there is a strong maximum principle and a Hopf boundary point lemma for $h$-subharmonic functions. We gather both results in the next statement, see \cite[Sect.~8.3]{gri-book} and \cite[Sect.~3.2]{gilbarg-trudinger}.

\begin{theorem}
\label{th:mp}
Let $\Om$ be a smooth domain of a weighted manifold $(M^m,g,e^h)$. Consider an $h$-subharmonic function $u\in C^2(\Om)\cap C(\overline{\Om})$. Then, we have:
\begin{itemize}
\item[(i)] if $u$ achieves its maximum in $\Om$ then $u$ is constant,
\item[(ii)] if there is $p_0\in\ptl\Om$ such that $u(p)<u(p_0)$ for any $p\in\Om$ then $\frac{\ptl u}{\ptl\nu}(p_0)>0$, where $\nu$ denotes the outer unit normal along $\ptl\Om$.
\end{itemize}
\end{theorem}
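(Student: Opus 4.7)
The plan is to show that Theorem~\ref{th:mp} is a direct consequence of the classical Hopf strong maximum principle and boundary point lemma for uniformly elliptic linear second-order operators without zeroth-order term, as presented for instance in \cite[Sect.~3.2]{gilbarg-trudinger}. The entire proof therefore reduces to checking that $\Delta^h$ belongs to the class of operators to which these classical results apply, and then invoking them.

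First I would unpack the expression $\Delta^h u = \Delta u + \escpr{\nabla h,\nabla u}$ in a local chart $(x^1,\dots,x^m)$ on $\Om$. Writing the Riemannian Laplacian as $\Delta u = g^{ij}\,\partial_{ij}^2 u + b^i\,\partial_i u$, with $b^i$ smooth functions built from the Christoffel symbols, and writing $\escpr{\nabla h,\nabla u}=g^{ij}\,(\partial_i h)\,\partial_j u$, one sees that
\begin{equation*}
\Delta^h u = g^{ij}(x)\,\partial_{ij}^2 u + \tilde b^i(x)\,\partial_i u,
\end{equation*}
where the principal part has symbol $g^{ij}$, and $\tilde b^i$ is smooth on $\Om$. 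Since $g$ is a Riemannian metric, $(g^{ij})$ is positive definite, so on any precompact subdomain of $\Om$ the operator $\Delta^h$ is uniformly elliptic with bounded coefficients; crucially, there is no zeroth-order coefficient.

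For part~(i), I would then invoke the strong maximum principle verbatim: an interior maximum of an $h$-subharmonic $u\in C^2(\Om)$ forces $u$ to be constant on its connected component of $\Om$. For readers who want to see why, I would sketch the standard argument: assuming $u$ is not constant, pick a ball $B\subset\Om$ on whose boundary $u$ first attains its maximum value at some point $q$ and strictly inside which $u$ is below the maximum, then consider on a small annulus centered at the center of $B$ the Hopf auxiliary function $v(x)=e^{-\alpha|x|^2}-e^{-\alpha R^2}$ and choose $\alpha>0$ so large that $\Delta^h v>0$ on the annulus (possible since the principal part of $\Delta^h$ is uniformly elliptic and the lower-order term is bounded). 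The comparison function $u+\eps v$ then violates the weak maximum principle at $q$, a contradiction. For part~(ii), the same barrier $v$ together with the hypothesis $u(p)<u(p_0)$ on $\Om$ and the smoothness of $\ptl\Om$ (which provides an interior tangent ball at $p_0$) gives, by the usual calculation, $\tfrac{\ptl u}{\ptl\nu}(p_0)\geq \eps\,\tfrac{\ptl v}{\ptl\nu}(p_0)>0$.

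The main work is thus not an obstacle at all but a bookkeeping matter: one has only to check that the extra drift term $\escpr{\nabla h,\nabla u}$ produced by the weight is a smooth first-order term and that $\Delta^h$ carries no zeroth-order coefficient, so that the standard Hopf barrier construction goes through without modification. Consequently the results can simply be quoted from \cite[Sect.~8.3]{gri-book} and \cite[Sect.~3.2]{gilbarg-trudinger}.
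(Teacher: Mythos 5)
Your proposal is correct and follows exactly the route the paper takes: the paper states Theorem~\ref{th:mp} without proof, simply citing \cite[Sect.~8.3]{gri-book} and \cite[Sect.~3.2]{gilbarg-trudinger}, which is precisely the reduction you carry out (observing that $\Delta^h$ is uniformly elliptic with smooth first-order drift and no zeroth-order term, so the classical Hopf strong maximum principle and boundary point lemma apply verbatim). Your additional local-coordinate bookkeeping and barrier sketch are accurate and just make explicit what the paper leaves to the cited references.
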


From the maximum principle it is clear that any $h$-subharmonic function on a compact manifold $M$ must be constant. In general, a weighted manifold is \emph{weighted parabolic} or \emph{$h$-parabolic} if any $h$-subharmonic function which is bounded from above must be constant. Otherwise we say that $M$ is \emph{weighted hyperbolic} or \emph{$h$-hyperbolic}. 

Next, we will recall how the $h$-parabolicity of manifolds can be characterized by means of weighted capacities. For more details about the definitions and results below we refer to \cite{grigoryan,grigoryan-escape,gri-masa}.

Let $\Om\subeq M$ be an open set and $K\sub\Om$ a compact set. The \emph{weighted Newtonian capacity} or {\em $h$-capacity} of the \emph{capacitor} $(K,\Om)$ is defined by
\begin{equation*}
\text{Cap}^{h}(K,\Om):=\inf\left\{\int_\Om|\nabla\phi|^2\,dv_h\,;\,\phi\in
H^1_0(\Om,dv_h) \text{ with }0\leq\phi\leq 1\text{ and
}\phi=1\text{ on }K\right\}.
\end{equation*}
Here $H^1_0(\Om,dv_h)$ denotes the closure of $C^\infty_0(\Om)$
with respect to the norm $\|u\|:=(\int_\Om
u^2\,dv_h+\int_\Om |\nabla u|^2\,dv_h)^{1/2}$ in the weighted Sobolev space $H^1(\Om,dv_h)$ of functions $u\in L^2(\Om,dv_h)$ with distributional gradient satisfying $|\nabla u|\in L^2(\Om,dv_h)$. By a
standard approximation argument it follows that
\begin{equation*}
\label{eq:capacity2}
\text{Cap}^{h}(K,\Om)=\inf\left\{\int_\Om|\nabla\phi|^2\,dv_h\,;\,\phi\in
C^\infty_0(\Om) \text{ with }0\leq\phi\leq 1\text{ and
}\phi=1\text{ on }K\right\}.
\end{equation*}
For a precompact open set $D$ with $\overline{D}\sub\Om$ we denote $\text{Cap}^h(D,\Om):=\text{Cap}^h(\overline{D},\Om)$. For $\Omega=M$ we write $\C^h(D):=\C^h(D,M)$ and we call it the \emph{$h$-capacity of $D$ at infinity}. It can be proved that equality
\begin{equation}
\label{eq:sensible}
\C^h(D)=\lim_{i \to \infty}\C^h(D, \Omega_i)
\end{equation}
holds for any exhaustion $\{\Omega_i\}_{i=1}^{\infty}$ of $M$ by precompact open sets. This means that $\cup_{i=1}^\infty\Om_i=M$ and $\overline{\Om}_i\sub\Om_{i+1}$ for any $i\in\nn$.

When $\Omega\subset M$ is a precompact open set and $K\subset\Om$ has smooth boundary, it can be proved (see \cite[Sect.~4.3]{grigoryan}) that
\begin{equation}
\label{eq:capint}
\C^h(K,\Omega)=\int_{\Omega} |\nabla u|^{2}\,dv_h=\int_{\partial K}|\nabla u|\,da_h=\int_{\ptl K}\frac{\ptl u}{\ptl\nu}\,da_h,
\end{equation}
where $\nu$ is the outer unit normal along $\ptl(\Om-K)$, i.e., the unit normal along $\ptl K$ pointing into $K$, and $u$ is the solution of the following Dirichlet problem for the weighted Laplace equation
\begin{equation}
\label{laplace}
\begin{cases}
\Delta^h u=0 \quad \text{in } \Om-K, \\
u\!\mid_{\partial K}=1,\\
u\mid_{\partial\Omega}=0.
\end{cases}
\end{equation}
Hence, the infimum in the definition of $\C^h(K,\Omega)$ is attained
by the solution to \eqref{laplace}. This function $u$ is called the
 \emph{$h$-capacity potential} of the capacitor $(K,\Omega)$. 

The relation between weighted capacities and the $h$-parabolicity (resp. $h$-hyperboli\-city) of a weighted manifold is shown in the next result, see \cite{Gri3}.

\begin{theorem}
\label{theorGrig}
Let $(M^m,g,e^h)$ be a weighted manifold. Then, $M$ is
$h$-parabolic $($resp. $h$-hyperbolic$)$ if and only if $M$ has null $($resp. positive$)$ $h$-capacity, i.e., there exists a precompact open set $D\subseteq M$ such that $\C^h(D)=0$ $($resp. $\C^h(D) >0$$)$.
\end{theorem}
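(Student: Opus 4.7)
The plan is to adapt the classical proof by Grigor'yan~\cite{Gri3,grigoryan} to the weighted setting, exploiting that $\Delta^h$ is a uniformly elliptic self-adjoint operator of divergence form with respect to $dv_h$, so that the maximum principle in Theorem~\ref{th:mp} and standard interior $C^{2,\alpha}$ elliptic regularity apply without change.

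The central object is the \emph{equilibrium potential at infinity}. First I would fix a precompact open set $D\sub M$ with smooth boundary and a smooth exhaustion $\{\Om_i\}_{i\in\nn}$ with $\overline D\sub\Om_i$, and let $u_i$ denote the $h$-capacity potential of $(D,\Om_i)$, extended by $1$ on $\overline D$ and by $0$ outside $\Om_i$. Applying Theorem~\ref{th:mp} to $u_{i+1}-u_i$ on $\Om_i\setminus\overline D$ yields $0\leq u_i\leq u_{i+1}\leq 1$, and interior elliptic estimates produce a monotone limit $u_\infty:=\lim_i u_i$ which is $h$-harmonic on $M\setminus\overline D$, identically $1$ on $\overline D$, and continuous on $M$. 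A short integration by parts using \eqref{eq:capint} gives the Dirichlet orthogonality identity $\int_M|\nabla(u_j-u_i)|^2\,dv_h=\C^h(D,\Om_i)-\C^h(D,\Om_j)$ for $j>i$.

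For the direction $\C^h(D)=0\Rightarrow h$-parabolicity, this identity together with \eqref{eq:sensible} shows that $\{u_i\}$ is $H^1$-Cauchy with $\nabla u_\infty\equiv 0$, so $u_\infty\equiv 1$ on $M$. Given any $h$-subharmonic $v$ bounded from above I may assume $\sup_Mv=0$ after subtracting a constant, and set $M_0:=\max_{\overline D}v\leq 0$. The function $v-M_0u_i$ is $h$-subharmonic on $\Om_i\setminus\overline D$ with boundary values $v-M_0\leq 0$ on $\ptl D$ and $v\leq 0$ on $\ptl\Om_i$, so Theorem~\ref{th:mp} forces $v\leq M_0u_i$ there; passing $i\to\infty$ gives $v\leq M_0$ on all of $M$. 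Thus $v$ attains its supremum at an interior point of $\overline D$, and the strong maximum principle in Theorem~\ref{th:mp} forces $v\equiv M_0$, proving $h$-parabolicity.

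Conversely, if $\C^h(D)>0$ for every precompact $D\sub M$, passing to the limit in \eqref{eq:capint} and \eqref{eq:sensible} yields $\int_M|\nabla u_\infty|^2\,dv_h=\C^h(D)>0$, so $u_\infty$ is non-constant. A short distributional computation together with the Hopf lemma across $\ptl D$ (the interior and exterior normal derivatives of $u_\infty$ at $\ptl D$ have opposite signs) shows $u_\infty$ is weakly $h$-superharmonic on $M$; hence $-u_\infty$ is a bounded, non-constant, weakly $h$-subharmonic function, contradicting $h$-parabolicity. The main technical obstacle is the limited regularity of $u_\infty$ at $\ptl D$: globally $u_\infty$ lies only in $H^1_{\mathrm{loc}}\cap C^0(M)$, so to conclude one must either smooth $u_\infty$ by convolution-type approximation or work directly with the weak formulation of $h$-sub/super-harmonicity, for which an analogue of Theorem~\ref{th:mp} still applies to functions in $H^1_{\mathrm{loc}}\cap L^\infty_{\mathrm{loc}}$.
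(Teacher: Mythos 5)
Your argument is sound and is essentially the standard Grigor'yan--Saloff-Coste proof; the paper itself offers no proof of this theorem, only the citation \cite{Gri3}, so you are reconstructing the argument of the cited source rather than competing with an argument in the text. The key steps all check out: the monotonicity $u_i\leq u_{i+1}$ via the maximum principle, the Dirichlet orthogonality identity $\int_M|\nabla(u_j-u_i)|^2\,dv_h=\C^h(D,\Om_i)-\C^h(D,\Om_j)$ (which follows from \eqref{eq:capint} exactly as you indicate), and the comparison $v\leq M_0u_i$ on $\Om_i-\overline{D}$ that settles the parabolic direction. This identity is also what legitimately upgrades the locally uniform convergence $u_i\to u_\infty$ to $\int_M|\nabla u_\infty|^2\,dv_h=\C^h(D)$, which you use in both directions.

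The one point you flag but leave open is the only place where real care is needed, because the paper's definition of $h$-parabolicity in Section~\ref{sec:wcap} quantifies over $C^2$ subharmonic functions: producing $-u_\infty$, which is merely continuous and weakly subharmonic across $\ptl D$, does not yet contradict that definition. Rather than mollifying or re-proving Theorem~\ref{th:mp} for $H^1_{\mathrm{loc}}\cap L^\infty_{\mathrm{loc}}$ functions, the cheapest repair is a convex reparametrization. Put $v:=1-u_\infty$, so that $v\equiv 0$ on $\overline{D}$, $v$ is $h$-harmonic and (by the strong maximum principle, since $u_\infty$ is non-constant) strictly positive on $M-\overline{D}$, with $0<\sup_Mv\leq 1$. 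Choose $0<\delta<\sup_Mv$ and a smooth non-decreasing convex $\Phi:\rr\to\rr$ with $\Phi\equiv 0$ on $(-\infty,\delta]$ and $\Phi''>0$ somewhere on $(\delta,\sup_Mv)$. Then $\Phi\circ v$ vanishes identically on the open neighborhood $\{v<\delta\}$ of $\overline{D}$ and satisfies $\Delta^h(\Phi\circ v)=\Phi''(v)\,|\nabla v|^2\geq 0$ on $M-\overline{D}$; hence $\Phi\circ v$ is a smooth, bounded, non-constant $h$-subharmonic function on all of $M$, which is the counterexample you need. (Incidentally, for the weak superharmonicity of $u_\infty$ you do not need Hopf's lemma: $\escpr{\nabla u_\infty,\nu}\geq 0$ along $\ptl D$, with $\nu$ pointing into $D$, already follows from $u_\infty\leq 1=u_\infty|_{\ptl D}$.) Finally, note that your two implications together already yield the dichotomy implicit in the ``resp.'' formulation, since no manifold could simultaneously contain a precompact set of null capacity and one of positive capacity.
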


In view of Theorem~\ref{theorGrig} and equality \eqref{eq:sensible}, in order to determine the $h$-parabolicity or $h$-hyperbolicity of a weighted manifold it suffices to find bounds on $\C^h(D,\Om_i)$ for some set $D\subset M$ and some exhaustion $\{\Om_i\}_i^\infty$. This will be done by assuming suitable bounds on the weighted or unweighted curvatures of the manifold.

\subsection{Weighted curvatures}\
\label{subsec:wc}

Let us consider a complete weighted manifold $(M^m,g,e^h)$. In this subsection we recall different notions of curvature in this context. 

The most extended generalizations of the Ricci curvature tensor are the Bakry-\'Emery Ricci tensors. These were introduced by Lichnerowicz \cite{lich1,lich2} and later employed by Bakry and \'Emery \cite{be} in the framework of diffusion generators.

\begin{definition}
The \emph{$\infty$-Bakry-\'Emery Ricci tensor} in $(M,g,e^h)$ is the $2$-tensor
$${\rm Ric}^h_\infty:={\rm Ric} -{\rm Hess}\, h,$$
where $\text{Ric}$ and $\text{Hess}$ denote the Ricci tensor and the Hessian in $(M,g)$. For any $q>0$, the \emph{$q$-Bakry-\'Emery Ricci tensor} in $(M,g,e^h)$ is defined as
$${\rm Ric}^h_q:={\rm Ric} -{\rm Hess}\, h -\frac{1}{q}\,\nabla h\otimes \nabla h.$$
\end{definition}
Observe that
$${\rm Ric}^h_\infty={\rm Ric}^h_q +\frac{1}{q}\,\nabla h\otimes \nabla h,$$ so that a lower bound on ${\rm Ric}^h_q$ implies the same lower bound on ${\rm Ric}^h_\infty$.

In this work the Bakry-\'Emery Ricci tensors will be used to deduce  inequalities for the weighted Laplacian of the distance function $r$ from a fixed point $o\in M$. On the other hand, comparison results for the Hessian of $r$ will be obtained by assuming bounds on the weighted sectional curvatures, that we now introduce.

\begin{definition}
\label{Def-Sec-h}
Fix a point $o \in M$. We denote by $r:M\to [0,\infty[$ the distance function from $o$ in $(M,g)$, and by $\text{cut}(o)$ the cut locus of $o$ in $(M,g)$. It is well known that $r$ is smooth on $M-(\text{cut}(o)\cup\{o\})$. For any point $p\in M-(\text{cut}(o)\cup\{o\})$, and any plane $\sg_p\subeq T_pM$ containing the radial direction $(\nabla r)_p$, we define the \emph{$\infty$-weighted sectional curvature} of $\sg_p$ as
\begin{displaymath}
{\rm Sec}_{\infty}^h(\sigma_p):={\rm
Sec}(\sigma_p)-\frac{1}{m-1}\,(\Hess h)_p\big((\nabla r)_p,(\nabla r)_p\big),
\end{displaymath}
where \text{Sec} stands for the sectional curvature in $(M^m,g)$. On the other hand, for any $q>0$, we define the {\it $q$-weighted sectional curvature} of $\sigma_p$ by
\begin{displaymath}
{\rm Sec}_{q}^h(\sigma_p):=\text{Sec}^h_\infty(\sg_p)-\frac{1}{(m-1)\,q}\,(\nabla h\otimes\nabla h)\big((\nabla r)_p,(\nabla r)_p\big).
\end{displaymath}
\end{definition}

We remark that, up to some constants, the previous definitions coincide with the ones introduced by Wylie~\cite{W}. Note also that, if $\{x_1,\ldots,x_{m-1}\}\subset T_pM$ is any orthonormal basis orthogonal to $(\nabla r)_p$, then for $q>0$ or $q=\infty$, we have
\[
\sum_{i=1}^{m-1}\text{Sec}^h_q(\sg_i)=(\text{Ric}^h_q)_p((\nabla r)_p,(\nabla r)_p),
\]
where $\sg_i$ denotes the plane spanned by $\{x_i,(\nabla r)_p\}$. Hence, a lower bound on $\text{Sec}^h_q$ implies  the same lower bound multiplied by $m-1$ on $\text{Ric}^h_q(\nabla r,\nabla r)$.

\subsection{Weighted model spaces}\ 
\label{secModel}

Here we introduce the model spaces that we will use to establish our comparison theorems.
\begin{definition}(see {\cite[Ch.~2]{GreW}, \cite[Sect.~3]{grigoryan}, \cite[Ch.~3]{Pe}}).
\label{model}
A $w$-model space is a smooth warped product $(M^m_w,g_w):=B^1\times_w F^{m-1}$ with base $B^{1}:=[0,\Lambda[\,\subset\mathbb{R}$ (where $0 < \Lambda \leq \infty$), fiber $F^{m-1}:=\mathbb{S}^{m-1}_{1}$ (the unit $(m-1)$-sphere with standard metric), and warping function $w:[0,\Lambda[\,\,\to [0,\infty[$ such that $w(r)>0$ for all $r>0$, whereas $w(0) = 0$, $w'(0) = 1$, and $w^{k)}(0) = 0$ for all even derivation orders. The point $o_{w}:=\pi^{-1}(0)$, where $\pi$ denotes the projection onto $B^1$, is called the {\em{center point}} of the
model space. If $\Lambda = \infty$, then $o_{w}$ is a pole of the manifold (recall that a \emph{pole} of a complete Riemannian manifold $M$ is a point $o\in M$ such that the exponential map $\exp_o:T_oM\to M$ is a diffeomorphism).
\end{definition}

\begin{remark}
The analytical conditions for $w$ at $0$ ensure that the warped metric is $C^\infty$ at the pole. However, for most of our comparison results it suffices to require $w(0)=0$ and $w'(0)=1$. In this case we get a model space with less regularity at the pole that we still denote $(M^m_w,g_w)$.
\end{remark}

\begin{example}
\label{propSpaceForm}
The simply connected space forms $\kam$ of constant sectional curvature $b$
can be constructed as $w$-models with any given point as center
point using the warping functions
\begin{equation*}
w_{b}(r):=
\begin{cases} 
\frac{1}{\sqrt{b}}\sin(\sqrt{b}\, r) &\text{if $b>0$},
\\
\phantom{\frac{1}{\sqrt{b}}} r &\text{if $b=0$},
\\
\frac{1}{\sqrt{-b}}\sinh(\sqrt{-b}\,r) &\text{if $b<0$}.
\end{cases}
\end{equation*}
Note that, for $b > 0$, the function $w_{b}(r)$ admits a smooth
extension to  $r = \pi/\sqrt{b}$. For $\, b \leq 0\,$ any center
point is a pole.
\end{example}

In \cite{GreW,grigoryan,MP3,MP4,oneill} we have a complete
description of the $w$-model spaces. In particular, the
sectional curvatures for planes containing the radial direction from the
center point are determined by the radial function
\[
-\frac{w''(r)}{w(r)}.
\] 
Moreover, the mean curvature of the metric sphere of radius $r$ from the center $o_w$ is
\begin{equation*}
\eta_{w}(r):=\frac{w'(r)}{w(r)}=\frac{d}{dr}\ln(w(r)) .
\end{equation*}

A \emph{weighted $(w,f)$-model space} is a triple $(M^m_w, g_w, e^{f(r)})$ where $e^{f(r)}$ is a radial weight in the $w$-model ($M^m_w,g_w)$. In this situation, the weighted volumes of the open metric ball $B^w_R$ of radius $R>0$ centered at $o_w$, and of the sphere $\ptl B^w_R$ are computed as follows
\begin{equation*}
\begin{split}
\Vol_f(B^w_R)&=V_0\,\int_0^R w^{m-1}(t)\,e^{f(t)}\,dt,
\\
\Vol_f(\partial B^w_R)&=V_0\,w^{m-1}(R)\,e^{f(R)},
\end{split}
\end{equation*}
where $V_0$ is the Riemannian volume of the unit sphere $\mathbb{S}^{m-1}_1$. We will denote by $q_{w,f}$ the \emph{weighted
isoperimetric quotient} for balls around the center, defined by
\[
q_{w,f}(R):=\frac{\Vol_f(B^w_R)}{\Vol_f(\ptl B^w_R)}=\frac{\int_0^R w^{m-1}(t)\,e^{f(t)}\,dt}{w^{m-1}(R)\,e^{f(R)}}.
\]

In \cite{HPR1} we computed the weighted capacity $\C^f(B^w_\rho,B^w_R)$ for any two radii $\rho,R>0$ with $R>\rho$. By equations \eqref{eq:capint} and \eqref{laplace} this is determined by the associated $f$-capacity potential, i.e., the solution to the following weighted Dirichlet problem
\begin{equation}
\label{eqfDirModel2}
\begin{cases}
\Delta_{M_w}^f u = 0\,\,\,&\text{in\, $A^w_{\rho,R}$},\\
\phantom{\Delta }u = 1\,\,\,&\text{in\, $\partial B^w_\rho$}, \\
\phantom{\Delta }u = 0\,\,\,&\text{in\, $\partial B^w_R$},
\end{cases}
\end{equation}
where $A^w_{\rho,R}$ is the annulus $B^w_R-\overline{B}^w_\rho$ in $M^m_w$. For later use we must mention that a radial function $\phi(r)$ defined on $A^w_{\rho,R}$ satisfies the first equation in \eqref{eqfDirModel2} if and only if
\begin{equation}
\label{eq:sabika}
\phi''(r)+\phi'(r)\,\bigg((m-1)\,\frac{w'(r)}{w(r)}+f'(r)\bigg)=0.
\end{equation}

\begin{theorem}[\cite{HPR1}]
\label{capacityweighted} 
In a weighted $(w,f)$-model space $(M^m_w,g_w,e^{f(r)})$, the solution to the Dirichlet problem \eqref{eqfDirModel2} in the annulus $A^w_{\rho,R}$ is given by the radial function
\begin{equation}
\label{fsolmodel}
\phi_{\rho,R,f}(r):=\left(\int_r^R w^{1-m}
(s)\,e^{-f(s)}\,ds\right)\,\left(\int_\rho^R w^{1-m}
(s)\,e^{-f(s)}\,ds\right)^{-1}.
\end{equation}
Therefore, we have
\begin{equation}
\begin{split}
\label{fcapacitymodel}
\C^f(B^w_\rho,B^w_R)&=|\phi_{\rho,R,f}'(\rho)|\,\Vol_f
(\partial B^w_\rho
)\\
&=V_0\,\left(\int_\rho^R w^{1-m}(s)\,e^{-f(s)}\,ds\right)^{-1}.
\end{split}
\end{equation}
\end{theorem}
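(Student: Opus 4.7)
The plan is to reduce the Dirichlet problem \eqref{eqfDirModel2} to an ODE by looking for a radial solution, solve that ODE explicitly, and then compute the capacity directly from the formula \eqref{eq:capint}.

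First I would search for a radial solution $\phi(r)$ on $A^w_{\rho,R}$. By \eqref{eq:sabika}, such a $\phi$ is $f$-harmonic if and only if
\[
\phi''(r)+\phi'(r)\left((m-1)\,\frac{w'(r)}{w(r)}+f'(r)\right)=0.
\]
This is a first order linear ODE for $\phi'$. Separating variables gives $(\ln|\phi'(r)|)'=-(m-1)(\ln w(r))'-f'(r)$, hence
\[
\phi'(r)=C_1\,w^{1-m}(r)\,e^{-f(r)}
\]
for some constant $C_1$. Integrating once more and imposing the boundary conditions $\phi(\rho)=1$ and $\phi(R)=0$ determines both constants of integration. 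A routine computation yields precisely the formula \eqref{fsolmodel} for $\phi_{\rho,R,f}(r)$. By standard uniqueness for the Dirichlet problem associated with the elliptic self-adjoint operator $\Delta^f_{M_w}$, this radial $\phi_{\rho,R,f}$ is the unique $f$-capacity potential of $(B^w_\rho,B^w_R)$.

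For the capacity formula I would apply \eqref{eq:capint} to the capacitor $(\overline{B}^w_\rho,B^w_R)$. Since $u=\phi_{\rho,R,f}(r)$ is radial and $|\nabla r|=1$, we have $|\nabla u|=|\phi_{\rho,R,f}'(r)|$, which is constant on $\partial B^w_\rho$. Hence
\[
\C^f(B^w_\rho,B^w_R)=\int_{\partial B^w_\rho}|\nabla u|\,da_f=|\phi_{\rho,R,f}'(\rho)|\,\Vol_f(\partial B^w_\rho).
\]
Differentiating \eqref{fsolmodel} at $r=\rho$ and using the explicit expression $\Vol_f(\partial B^w_\rho)=V_0\,w^{m-1}(\rho)\,e^{f(\rho)}$ recalled in Section~\ref{secModel}, the factors $w^{m-1}(\rho)\,e^{f(\rho)}$ cancel, leaving the stated expression \eqref{fcapacitymodel}.

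There is no real obstacle in this argument; it is essentially an explicit integration together with an appeal to uniqueness. The only point that deserves mild care is justifying that the unique solution to \eqref{eqfDirModel2} must be the radial one. This follows either from the rotational symmetry of the warped metric $g_w$ and the radial character of the weight $e^{f(r)}$ (so that for any isometry $\Phi$ of $(M^m_w,g_w)$ fixing the center $o_w$, both $u$ and $u\circ\Phi$ solve \eqref{eqfDirModel2}, forcing $u=u\circ\Phi$ by uniqueness), or directly from the fact that the radial $\phi_{\rho,R,f}$ constructed above is a bona fide solution of the Dirichlet problem, which is unique in $C^2(A^w_{\rho,R})\cap C(\overline{A^w_{\rho,R}})$ by the strong maximum principle in Theorem~\ref{th:mp}.
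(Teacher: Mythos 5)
Your proposal is correct and follows exactly the route the paper intends (and that the cited reference \cite{HPR1} carries out): reduce to the radial ODE \eqref{eq:sabika}, integrate twice to get \eqref{fsolmodel}, invoke uniqueness for the Dirichlet problem via the maximum principle, and then evaluate \eqref{eq:capint} on $\partial B^w_\rho$ using $\Vol_f(\partial B^w_\rho)=V_0\,w^{m-1}(\rho)\,e^{f(\rho)}$. Nothing is missing.
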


\begin{remark} 
\label{Alhfors} 
The last equality in equation ~\eqref{fcapacitymodel} can be written in terms of the weighted area of the geodesic spheres, so that we get
\begin{displaymath}
\C^f(B^w_\rho,B^w_R)=\left(\int_\rho^R\frac{dt}{\Vol_f(\partial B^w_t)}\right)^{-1}.
\end{displaymath}
As a direct consequence, it can be obtained a weighted version of the Ahlfors criterion: a weighted $(w,f)$-model space is $f$-parabolic if and only if $\int_\rho^\infty \Vol_f(\ptl B^w_t)^{-1}\,dt=\infty$ for some $\rho>0$, see \cite{Gri2,HPR1}. 
\end{remark}

\section{Analysis of the distance function}
\label{distanceanalysis}

Let $(M^m,g,e^h)$ be a weighted manifold such that $M$ is complete and noncompact. In this section, depending on lower or upper bounds for some of the weighted curvatures, we provide Laplacian and Hessian comparisons for the distance function $r:M\to[0,\infty[$ from a fixed point $o\in M$. Then, we will deduce estimates outside the cut locus $\text{cut}(o)$ for the Hessian and Laplacian of a modification $F\circ r$ associated to a smooth function $F:(0,\infty)\to\rr$.

While the Laplacian comparisons in Subsection \ref{subLap} will lead us to intrinsic comparison results for the volume and the capacity of balls centered at $o$, which eventually provide an intrinsic description of parabolicity, the Hessian comparisons in Subsection \ref{subHess} will allow us to deduce analogous consequences in the extrinsic setting, i.e., when we consider a submanifold $P$ of the ambient weighted manifold $M$.

The starting point for our estimates is an inequality which comes from the Index Lemma and the relation between the Hessian of the distance function $r$ and the index form over Jacobi vector fields.

Fix a point $p\in M-({\rm cut}(o)\cup \{o\})$ and denote $r_p:=r(p)$. Let
$\gamma:[0,r_p]\longrightarrow M$ be the minimizing geodesic parameterized by arc-length joining $o$ with $p$. Take a unit vector $x\in T_p M$ with $x\perp (\nabla r)_p$. It is well known, see \cite[Ch.~2]{GreW}, that
\[
(\Hess r)_p(x,x)=I_\gamma(J,J):=\int_0^{r_p} \left(|J'|^2-\langle {\rm R}(J,\gamma')J,\gamma'\rangle\right)(t)\,dt,
\]
where $J$ is the Jacobi vector field along $\gamma$ with
$J(0)=0$ and $J(r_p)=x$. Here $J'$ stands for the covariant derivative of $J$, and $\text{R}$ is the Riemann curvature tensor in $(M,g)$. On the other hand, the Index Lemma \cite[Sect.~10.2]{dcriem} implies that
\[
I_\gamma(J,J)\leq I_\gamma (X,X),
\]
for any vector field $X$ along $\gamma$ such that $X(0)=J(0)$ and $X(r_p)=J(r_p)$. As a consequence
\begin{displaymath}
(\Hess r)_p(x,x)\leq \int_0^{r_p} \left(|X'|^2-\langle{\rm R}(X,\gamma')X,\gamma'\rangle\right)(t)\,dt,
\end{displaymath}
for any vector field $X$ along $\gamma$ with $X(0)=0$ and $X(r_p)=x$.

Let $w(s)$ be a function such that $w(0)=0$ and $w(s)>0$ for all $s>0$. We define $X(t):=\frac{w(t)}{w(r_p)}\,Y(t)$, where $Y$ is the unique parallel vector
field along $\gamma$ with $Y(r_p)=x$. Since $|X(t)|^2=\frac{w^2(t)}{w^2(r_p)}$, $\langle X(t),\gamma'(t)\rangle=0$ and $X'(t)=\frac{w'(t)}{w(r_p)}\,Y(t)$, we deduce the inequality
\begin{equation}
\label{eq:seque}
\begin{aligned}
(\Hess r)_p(x,x) &\leq \frac{1}{w^2(r_p)}\, \int_0^{r_p}
\left((w')^2(t)-{\rm
Sec}(\sigma_{\gamma(t)})\,w^2(t)\right)dt,
\end{aligned}
\end{equation}
where $\sigma_{\gamma(t)}\subseteq T_{\gamma(t)}M$ is the plane spanned by $\{\gamma'(t),X(t)\}$.

\subsection{Laplacian comparisons under lower bounds for the Ricci curvatures}\
\label{subLap}

In the next result we obtain inequalities for the weighted Laplacian of $r$ that generalize previous estimates for the unweighted case $h=0$ given in \cite[Ch.~2]{GreW}, see also \cite{Pa2,Zhu}.

\begin{theorem}[\cite{qian,WW,MRS,PRRS}]
\label{th:laplaceineqr} 
Let $(M^m,g, e^h)$ be a weighted manifold, $r:M\rightarrow[0,\infty[$ the distance function from a fixed point $o\in M$, and $w(s)$ a smooth function such that $w(0)=0$ and $w(s)>0$ for all $s>0$.
\begin{itemize}
\item[a) ] If there is $q>0$ such that the $q$-Bakry-\'Emery Ricci curvature in the radial direction is bounded from below in $M-(\rm{cut}(o)\cup\{o\})$ as 
 $${\rm Ric}^h_q (\nabla r,\nabla r) \geq
-(m+q-1)\,\frac{w''(r)}{w(r)},$$
then
\begin{equation}
\label{lap1}
\Delta^hr\leq (m+q-1)\,\frac{w'(r)}{w(r)}
\end{equation}
on $M-(\rm{cut}(o)\cup\{o\})$. 

As a consequence, for every smooth function $F:(0,\infty)\to\rr$ with $F'\geq 0$ $($respectively $F'\leq 0$$)$, we have
\begin{equation}
\label{lapF1} 
\Delta^h (F\circ r)\leq (\geq)\,F''(r)+F'(r)\,(m+q-1)\,\frac{w'(r)}{w(r)}
\end{equation}
on $M-(\rm{cut}(o)\cup\{o\})$. 

\medskip

\item [b) ] If the $\infty$-Bakry-\'Emery Ricci curvature in the radial direction is bounded from below in $M-(\rm{cut}(o)\cup\{o\})$ as 
$${\rm Ric}^h_\infty(\nabla r,\nabla r) \geq
-(m-1)\,\frac{w''(r)}{w(r)},$$ 
and there exists a non-decreasing $C^1$ function $\theta:[0,\infty[\to\rr$ such that
$$\langle\nabla h,\nabla r\rangle\,w'(r)\leq
\theta(r)\,w'(r)$$
on $M-(\rm{cut}(o)\cup\{o\})$, then
\begin{equation}
\label{lap2}
\Delta^h r\leq (m-1)\,\frac{w'(r)}{w(r)}+\theta(r)
\end{equation}
on $M-(\rm{cut}(o)\cup\{o\})$.

As a consequence, for every smooth function $F:(0,\infty)\to\rr$ with $F'\geq 0$ $($respectively $F'\leq 0$$)$, we have
\begin{equation}
\label{lapF2}
\Delta^h (F\circ r)\leq (\geq)\,F''(r)+\,F'(r)\left((m-1)\,\frac{w'(r)}{w(r)}+\theta(r)\right)
\end{equation}
on $M-(\rm{cut}(o)\cup\{o\})$. 
\end{itemize}
\end{theorem}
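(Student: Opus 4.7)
The plan is to sum inequality \eqref{eq:seque} over an orthonormal basis $\{e_1,\dots,e_{m-1}\}$ of $(\nabla r)_p^\perp$. Letting each $\sigma_i(t)$ be the plane spanned by $\gamma'(t)$ and the parallel transport of $e_i$ along $\gamma$, the identity $\sum_i\text{Sec}(\sigma_i(t))=\text{Ric}(\gamma'(t),\gamma'(t))$ yields
\begin{equation*}
\Delta r(p)\leq\frac{1}{w^2(r_p)}\int_0^{r_p}\Bigl((m-1)(w')^2(t)-\text{Ric}(\gamma',\gamma')\,w^2(t)\Bigr)dt.
\end{equation*}
Since $|\nabla r|=1$ off the cut locus, one has $\Delta^h r=\Delta r+\langle\nabla h,\nabla r\rangle_p$, so it remains to estimate the integral above. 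Setting $\psi(t):=h(\gamma(t))$ along the unit-speed geodesic, one has $\psi'(t)=\langle\nabla h,\nabla r\rangle_{\gamma(t)}$ and $\psi''(t)=(\Hess h)(\gamma',\gamma')$, which lets us convert the lower bounds on $\text{Ric}^h_q$ or $\text{Ric}^h_\infty$ into pointwise bounds on $-\text{Ric}(\gamma',\gamma')\,w^2$ in terms of $\psi$ and $w$.

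For part (b), substituting $\text{Ric}=\text{Ric}^h_\infty+\psi''$ and using the hypothesis produces the integrand $(m-1)((w')^2+w''w)-\psi'' w^2=(m-1)(w'w)'-\psi'' w^2$. The first term integrates explicitly thanks to $w(0)=0$, yielding $(m-1)w'(r_p)/w(r_p)$ after division by $w^2(r_p)$. The second is handled by integration by parts, producing $-\psi'(r_p)w^2(r_p)+2\int_0^{r_p}\psi'ww'\,dt$. After passing from $\Delta r$ to $\Delta^h r$ by adding $\psi'(r_p)$, the $\psi'(r_p)$ boundary term cancels and there remains $(2/w^2(r_p))\int_0^{r_p}\psi' w w'\,dt$. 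The hypothesis $\langle\nabla h,\nabla r\rangle w'\leq\theta(r)w'$ together with $w\geq 0$ gives $\psi'(t)w(t)w'(t)\leq\tfrac{1}{2}\theta(t)(w^2)'(t)$, and a second integration by parts, using $\theta'\geq 0$, bounds this remainder by $\theta(r_p)$, producing \eqref{lap2}.

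For part (a), substitute instead $\text{Ric}=\text{Ric}^h_q+\psi''+(\psi')^2/q$. The hypothesis then leads to the integrand $(m-1)(w'w)'+q w''w-\psi'' w^2-(\psi')^2 w^2/q$. Integrating by parts the pieces $qw''w$ and $\psi''w^2$ produces the boundary contributions $qw'(r_p)w(r_p)-\psi'(r_p)w^2(r_p)$ together with the volume remainder $-q(w')^2+2\psi'ww'-(\psi')^2 w^2/q$, which is exactly the negative of the perfect square $(\sqrt{q}\,w'-\psi' w/\sqrt{q})^2$ and therefore nonpositive. Dividing by $w^2(r_p)$ and adding $\psi'(r_p)$ gives \eqref{lap1}. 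The completion of squares here is the only delicate step and is precisely what forces the coefficient $(m+q-1)$ in the conclusion; everything else is bookkeeping.

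Finally, the inequalities for $F\circ r$ follow from the chain rule $\Delta^h(F\circ r)=F''(r)|\nabla r|^2+F'(r)\Delta^h r=F''(r)+F'(r)\,\Delta^h r$, valid on $M-(\text{cut}(o)\cup\{o\})$. Multiplying \eqref{lap1} or \eqref{lap2} by $F'(r)$ preserves the inequality when $F'\geq 0$ and reverses it when $F'\leq 0$, yielding \eqref{lapF1} and \eqref{lapF2}.
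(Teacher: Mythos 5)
Your proposal is correct. For part~(b) it reproduces the paper's own argument verbatim: sum \eqref{eq:seque} over an orthonormal frame orthogonal to $\nabla r$, substitute $\text{Ric}=\text{Ric}^h_\infty+\text{Hess}\,h$, integrate $f''w^2$ by parts so that the boundary term $-f'(r_p)$ cancels against the $\escpr{\nabla h,\nabla r}$ contribution to $\Delta^h r$, and absorb the remaining $\int (w^2)'f'$ via the hypothesis $f'w'\leq\theta w'$ and a second integration by parts using $\theta'\geq 0$. For part~(a) the paper gives no proof at all; it simply cites \cite{qian,MRS}, where the inequality is derived from the Bochner--Weitzenb\"ock formula. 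Your self-contained index-form proof of \eqref{lap1} is a genuine (and welcome) alternative: after writing $\text{Ric}=\text{Ric}^h_q+\psi''+(\psi')^2/q$ and integrating by parts, the remainder $-q(w')^2+2\psi'ww'-(\psi')^2w^2/q=-\bigl(\sqrt{q}\,w'-\psi'w/\sqrt{q}\bigr)^2\leq 0$ is exactly the completion of squares that the paper itself uses, in the form of inequality \eqref{newton}, to prove the Hessian analogue in Theorem~\ref{th:hessianineqr}~a); your argument is in effect the trace of that Hessian proof, so it is fully consistent with the paper's methodology while avoiding the external Bochner-type machinery. All the bookkeeping checks out (the boundary terms $w(0)=0$, the cancellation of $\psi'(r_p)$, and the chain rule $\Delta^h(F\circ r)=F''(r)+F'(r)\,\Delta^h r$ off the cut locus), so nothing needs to be repaired.
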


\begin{proof}
The proof of \eqref{lap1} can be found in \cite{qian} when the lower bound on $\text{Ric}^h_q$ is constant, and in \cite{MRS} for the general
case. The proof of \eqref{lap2} appears in \cite{WW} for constant bounds, and in \cite{PRRS} when $\frac{w''(r)}{w(r)}$ is a positive function and $\theta:[0,\infty[\to\rr$ is continuous and nondecreasing. The starting point for both proofs is the Bochner-Weitzenb\"ock formula in weighted manifolds. We provide below a complete proof of \eqref{lap2} based on the inequality \eqref{eq:seque}.

Take a point $p \in M - ({\rm cut}(o) \cup \{o\})$ and denote $r_p:=r(p)$. Let $\gamma:[0,r_p]\to M$ be the minimizing geodesic parameterized by arc-length joining $o$ with $p$. It is well known that $\gamma'(t)=(\nabla r)_{\gamma(t)}$ for any $t\neq 0$. We apply \eqref{eq:seque} to an orthonormal family $\{x_1,\ldots,x_{m-1}\}$ in $T_pM$ orthogonal to $(\nabla r)_p$. By summing up, and taking into account that $(\text{Hess}\,r)_p((\nabla r)_p,(\nabla r)_p)=0$, it follows that
\[
(\Delta r)(p)\leq\frac{1}{w^2(r_p)}\int_0^{r_p}\big((m-1)\,(w')^2(t)-\text{Ric}_{\gamma(t)}((\nabla r)_{\gamma(t)},(\nabla r)_{\gamma(t)})\,w^2(t)\big)\,dt.
\]

Define the function $f(t):=(h\circ\gamma)(t)$. It is clear that $f'(t)=\escpr{\nabla h,\nabla r}(\gamma(t))$ and $f''(t)=(\text{Hess}\,h)_{\gamma(t)}((\nabla r)_{\gamma(t)},(\nabla r)_{\gamma(t)})$. Since $\text{Ric}=\text{Ric}^h_\infty+\text{Hess}\,h$, by using the lower bound on $\text{Ric}^h_\infty$ and integration by parts, we get
\begin{align*}
(\Delta r)(p)&\leq\frac{m-1}{w^2(r_p)}\int_0^{r_p}\big((w')^2(t)+w(t)\,w''(t)\big)\,dt-\frac{1}{w^2(r_p)}\int_0^{r_p}w^2(t)\,f''(t)\,dt
\\
&=(m-1)\,\frac{w'(r_p)}{w(r_p)}-\frac{1}{w^2(r_p)}\int_0^{r_p}w^2(t)\,f''(t)\,dt.
\end{align*}
On the other hand, the definition of weighted Laplacian in \eqref{eq:flaplacian} gives us
\[
(\Delta^h\,r)(p)=(\Delta r)(p)+\escpr{\nabla h,\nabla r}(p)=(\Delta r)(p)+f'(r_p).
\]
By substituting this information above and integrating by parts, we obtain
\begin{align*}
(\Delta^h r)(p)&\leq (m-1)\,\frac{w'(r_p)}{w(r_p)}+\frac{1}{w^2(r_p)}\int_0^{r_p}
(w^2)'(t)\,f'(t)\,dt
\\
&\leq (m-1)\,\frac{w'(r_p)}{w(r_p)}+\frac{1}{w^2(r_p)}\int_0^{r_p} (w^2)'(t)\,\theta(t)\,dt
\\
&=(m-1)\,\frac{w'(r_p)}{w(r_p)}+\theta(r_p)-\frac{1}{w^2(r_p)}\int_0^{r_p} w^2(t)\,\theta'(t)\,dt
\\
&\leq (m-1)\,\frac{w'(r_p)}{w(r_p)}+\theta(r_p),
\end{align*}
where we have used the hypothesis $f'(t)\,w'(t)\leq \theta(t)\,w'(t)$ and that $\theta(t)$ is a non-decreasing function. This shows \eqref{lap2}.

Finally, to deduce \eqref{lapF1} and \eqref{lapF2} it suffices to have in mind the estimates in \eqref{lap1} and \eqref{lap2} together with equality
\begin{equation}
\label{eq:bullin}
\Delta\,(F\circ r)=F''(r)+F'(r)\,\Delta r,
\end{equation}
which holds on $M-(\text{cut}(o)\cup\{o\})$.
\end{proof}

\begin{remark} 
When we assume additional conditions on the derivatives $w^{k)}(0)$, we can identify the lower bounds in Theorem \ref{th:laplaceineqr} for the radial Bakry-\'Emery Ricci curvatures and the upper bounds deduced for $\Delta^h r$ as the radial Ricci curvatures and the Laplacian of the distance function from $o_w$ in some $w$-model space. 

More precisely, if $q \in \mathbb{N}$ in statement a), then $-(m+q-1)\,\frac{w''(r)}{w(r)}$ is the Ricci curvature in the radial direction of the model $(M^{m+q}_w,g_w)$, whereas $(m+q-1)\,\frac{w'(r)}{w(r)}$ is the Laplacian in $(M^{m+q}_w,g_w)$ of the distance function from the center point $o_w$. 

In a similar way, the bound in statement b) for $\text{Ric}_\infty^h$ coincides with the radial Ricci curvature of the model space $(M^{m}_w,g_w)$, whereas the quantity $(m-1)\,\frac{w'(r)}{w(r)} + \theta(r)$ coincides with the weighted
Laplacian of the distance function in the weighted $(w,f)$-model space
$(M^m_w,g_w,e^{f(r)})$ with $f(r):=\int_0^r\theta(s)\,ds$. 
\end{remark}

\subsection{Hessian comparisons under lower bounds for the sectional curvatures}
\label{subHess}\

Here we follow the ideas employed in the unweighted case \cite[Ch.~2]{GreW} to establish comparisons for the Hessian of the distance function involving the radial derivatives of the weight and the weighted sectional curvatures introduced in Definition~\ref{Def-Sec-h}.

\begin{theorem}
\label{th:hessianineqr} 
Let $(M^m,g, e^h)$ be a weighted manifold, $r:M\rightarrow
[0,\infty[$ the distance function from a point $o\in M$, and $w(s)$ a smooth function such that $w(0)=0$ and $w(s)>0$ for all $s>0$.
\begin{itemize}
\item[a) ] Suppose that there is $q>0$ such that, for any $p\in M-({\rm cut}(o)\cup \{o\})$ and any plane $\sigma_p\subseteq T_pM$ containing the radial direction $(\nabla r)_p$, the $q$-weighted sectional curvature is bounded from below as
\begin{displaymath}
{\rm Sec}_{q}^h(\sigma_p)\geq -\,\frac{m+q-1}{m-1}\,\frac{w''(r)}{w(r)}.
\end{displaymath}
Then, the inequality
\begin{equation}
\label{hess1}
(\Hess r)(x,x)+\frac{1}{m-1}\,\langle \nabla h,\nabla
r\rangle \leq \frac{m+q-1}{m-1}\,\frac{w'(r)}{w(r)}
\end{equation}
holds on $M-({\rm cut}(o)\cup \{o\})$ for any unit tangent vector $x$ orthogonal to $\nabla r$.

As a consequence, for every smooth function $F:(0,\infty)\to\rr$ with $F'\geq 0$ $($respectively $F'\leq 0$$)$, we obtain
\begin{equation}
\label{hessF1}
\Delta^h (F\circ r)\leq (\geq)\,F''(r)+F'(r)\,(m+q-1)\,\frac{w'(r)}{w(r)}
\end{equation}
on $M-({\rm cut}(o)\cup \{o\})$.

\medskip

\item[b) ] If, for any $p\in M-({\rm cut}(o)\cup \{o\})$ and any plane $\sigma_p\subseteq T_pM$ containing $(\nabla r)_p$, the $\infty$-weighted sectional curvature is bounded from below as
\begin{equation*}
{\rm Sec}_{\infty}^h(\sigma_p) \geq -\,\frac{w''(r)}{w(r)},
\end{equation*}
and there exists a non-decreasing $C^1$ function $\theta:[0,\infty[\to\rr$ such that
$$\langle\nabla h,\nabla r\rangle\,w'(r)\leq
\theta(r)\,w'(r)$$
on $M-({\rm cut}(o)\cup \{o\})$, then the inequality
\begin{equation}
\label{hessianineqinfty}
(\Hess r)(x,x) +\frac{1}{m-1}\,\langle \nabla h,\nabla r\rangle\leq\frac{w'(r)}{w(r)}+\frac{1}{m-1}\,\theta(r)
\end{equation}
holds on $M-({\rm cut}(o)\cup \{o\})$ for any unit tangent vector $x$ orthogonal to $\nabla r$.

As a consequence, for every smooth function $F:(0,\infty)\to\rr$ with $F'\geq 0$ $($respectively $F'\leq 0$$)$, we obtain
\begin{equation}
\label{hessF2}
\Delta^h (F\circ r)\leq (\geq)\,F''(r)+F'(r)\left((m-1)\,\frac{w'(r)}{w(r)}+\theta(r)\right)
\end{equation}
on $ M-({\rm cut}(o)\cup \{o\})$.
\end{itemize}
\end{theorem}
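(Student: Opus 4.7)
The plan is to derive both Hessian comparisons directly from the Index Lemma estimate \eqref{eq:seque} by converting the hypothesis on $\text{Sec}^h_q$ or $\text{Sec}^h_\infty$ into a pointwise lower bound on the Riemannian $\text{Sec}$ and integrating by parts along the minimizing geodesic, and then to obtain the $\Delta^h(F\circ r)$ consequences by summing the resulting pointwise bounds over an orthonormal basis of $(\nabla r)^\perp_p$. Throughout, fix $p\in M-(\text{cut}(o)\cup\{o\})$, let $\gamma:[0,r_p]\to M$ be the unit-speed minimizing geodesic from $o$ to $p$, and set $f(t):=h(\gamma(t))$, so that $f'(t)=\escpr{\nabla h,\nabla r}_{\gamma(t)}$ and $f''(t)=(\Hess h)_{\gamma(t)}(\nabla r,\nabla r)$.

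For part b), unwrapping the hypothesis via Definition \ref{Def-Sec-h} yields the pointwise bound $\text{Sec}(\sg_{\gamma(t)})\geq -w''(t)/w(t)+f''(t)/(m-1)$ along $\gamma$. Inserting this into \eqref{eq:seque} and using $(ww')'=(w')^2+ww''$, the geometric piece integrates to $w'(r_p)/w(r_p)$, while a single integration by parts on the $f''$-term produces $-f'(r_p)/(m-1)$ plus the remainder $\frac{1}{(m-1)w^2(r_p)}\int_0^{r_p}(w^2)'(t)f'(t)\,dt$. Since $w\geq 0$, the hypothesis $f'(t)w'(t)\leq\theta(t)w'(t)$ forces $(w^2)'(t)f'(t)\leq(w^2)'(t)\theta(t)$ pointwise; a further integration by parts together with the monotonicity of $\theta$ then bounds this remainder by $\theta(r_p)/(m-1)$, exactly mirroring the device used in the proof of Theorem \ref{th:laplaceineqr}~b) already given in the excerpt, and \eqref{hessianineqinfty} follows.

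For part a), expanding $\text{Sec}^h_q$ produces the strictly stronger pointwise bound
\[
\text{Sec}(\sg_{\gamma(t)})\geq -\frac{m+q-1}{m-1}\frac{w''(t)}{w(t)}+\frac{1}{m-1}f''(t)+\frac{1}{(m-1)q}(f'(t))^2.
\]
Writing $a:=(m+q-1)/(m-1)$, the algebraic identity $(w')^2+a\,ww''=-\frac{q}{m-1}(w')^2+a\,(ww')'$ lets the warping part integrate to $a\,w'(r_p)/w(r_p)$ at the cost of a negative integral of $(w')^2$. After integrating the $f''$-piece by parts as in part b), the three remaining integrands involving $(w')^2$, $ww'f'$ and $w^2(f')^2$ assemble, up to the factor $\frac{1}{(m-1)w^2(r_p)}$, into the perfect square
\[
-\int_0^{r_p}\Bigl(\sqrt{q}\,w'(t)-\tfrac{1}{\sqrt{q}}\,w(t)f'(t)\Bigr)^2\,dt\;\leq\;0,
\]
leaving exactly $a\,w'(r_p)/w(r_p)-f'(r_p)/(m-1)$ on the right, which is \eqref{hess1}. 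Recognizing this completing-the-square cancellation is, I expect, the main algebraic obstacle of the proof.

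Finally, summing \eqref{hess1} (respectively \eqref{hessianineqinfty}) over any orthonormal basis of $(\nabla r)^\perp_p$ of cardinality $m-1$ replaces $(\Hess r)(x,x)$ by $\Delta r$ and multiplies the $\escpr{\nabla h,\nabla r}$-term by $m-1$, producing the Laplacian bound $\Delta^h r\leq (m+q-1)w'(r)/w(r)$ in case a) and $\Delta^h r\leq (m-1)w'(r)/w(r)+\theta(r)$ in case b). Substituting into the identity $\Delta^h(F\circ r)=F''(r)+F'(r)\,\Delta^h r$, which is immediate from \eqref{eq:bullin} together with the definition of $\Delta^h$, and distinguishing the cases $F'\geq 0$ and $F'\leq 0$, yields \eqref{hessF1} and \eqref{hessF2}. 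Apart from the perfect-square observation in part a), the whole argument is a line-by-line adaptation of the integration-by-parts template used in the proof of Theorem \ref{th:laplaceineqr}.
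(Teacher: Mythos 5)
Your proposal is correct and follows essentially the same route as the paper: both parts start from \eqref{eq:seque}, unwrap the weighted sectional curvature hypothesis, integrate the $f''$-term by parts, and in part b) exploit the monotonicity of $\theta$ exactly as in the paper's proof. Your ``perfect square'' cancellation in part a) is precisely the paper's Young-type inequality \eqref{newton}, $2\,f'(t)\,w(t)\,w'(t)\leq\frac{1}{q}\,(f'\,w)^2(t)+q\,(w')^2(t)$, written in completed-square form, and your derivation of \eqref{hessF1}--\eqref{hessF2} by tracing over an orthonormal basis is the same computation the paper performs via the identity for $\Hess(F\circ r)$.
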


\begin{proof} 
Fix a point $p \in M - ({\rm cut}(o) \cup \{o\})$ and denote $r_p:=r(p)$. Let $\gamma:[0,r_p]\to M$ be the minimizing geodesic parameterized by arc-length joining $o$ with $p$. It is well known that $\gamma'(t)=(\nabla r)_{\gamma(t)}$ for any $t\neq 0$. Define the function $f(t):=(h\circ\gamma)(t)$. Note that $f'(t)=\escpr{\nabla h,\nabla r}(\gamma(t))$ and $f''(t)=(\text{Hess}\,h)_{\gamma(t)}((\nabla r)_{\gamma(t)},(\nabla r)_{\gamma(t)})$.

Let us prove \eqref{hess1}. Starting from \eqref{eq:seque}, and having in mind the lower bound for $\text{Sec}^h_q(\sigma_{\gamma(t)})$, we arrive at
\begin{equation}
\label{ineq:indexform}
\begin{split}
(\Hess r)_p(x,x)&=\frac{1}{w^2(r_p)} \int_0^{r_p} (w')^2(t)\,dt
\\
&-\frac{1}{w^2(r_p)} \int_0^{r_p}\left({\rm Sec}_{q}^h(\sigma_{\gamma(t)})+\frac{f''(t)}{m-1}+\frac{(f')^2(t)}{(m-1)\,q}\right)\,w^2(t)\,dt 
\\ 
&\leq\frac{1}{w^2(r_p)}\,\int_0^{r_p}\left((w')^2(t)
+\frac{m+q-1}{m-1}\,w''(t)\,w(t)\right)dt
\\
&-\frac{1}{w^2(r_p)}\,\int_0^{r_p}\left(\frac{f''(t)\,w^2(t)}{m-1}+\frac{(f')^2(t)\,w^2(t)}{(m-1)\,q}\right)dt. 
\end{split}
\end{equation}
On the other hand, applying integration by parts, we get
\begin{equation}
\label{int_parts}
\int_0^{r_p} f''(t)\,w^2(t)\,dt=f'(r_p)\,w^2(r_p)-\int_0^{r_p} 2\,f'(t)\,w(t)\,w'(t)\,dt.
\end{equation}
Moreover
\begin{equation}
\label{newton}
2\,f'(t)\,w(t)\,w'(t)\leq\frac{1}{q}\,(f'\,w)^2(t)+q\,(w')^2(t).
\end{equation}
Replacing the information of \eqref{int_parts} and \eqref{newton} into \eqref{ineq:indexform}, we obtain 
\[
(\Hess r)_p(x,x)\leq\frac{m+q-1}{(m-1)\,w^2(r_p)}\,\int_0^{r_p}\left((w')^2(t)+w''(t)\,w(t)\right)dt-\frac{1}{m-1}\,f'(r_p).
\]
Finally, integrating by parts again, it follows that
\begin{equation*}
 (\Hess r)_p(x,x) + \frac{1}{m-1}\,f'(r_p)\leq\frac{m+q-1}{m-1}\,\frac{w'(r_p)}{w(r_p)},
\end{equation*}
which is the desired inequality at the point $p$.

Let us prove \eqref{hessianineqinfty}. Starting from \eqref{eq:seque}, and taking into account the lower bound for $\text{Sec}^h_\infty(\sigma_{\gamma(t)})$, we get
\begin{align*}
(\text{Hess}\,r)_p(x,x)&\leq\frac{1}{w^2(r_p)}\int_0^{r_p}\left((w')^2(t)+w''(t)\,w(t)\right)dt
\\
&-\frac{1}{(m-1)\,w^2(r_p)}\int_0^{r_p}f''(t)\,w^2(t)\,dt.
\end{align*}
Using integration by parts, the hypothesis $f'(t)\,w'(t)\leq
\theta(t)\,w'(t)$, and the fact that $\theta'\geq 0$, we obtain
\begin{equation*}
\label{hessian-ineq}
\begin{split}
(\Hess r)_p(x,x)&\leq \frac{w'(r_p)}{w(r_p)}-\frac{f'(r_p)}{(m-1)}+\frac{1}{(m-1)\,w^2(r_p)}\int_0^{r_p}(w^2)'(t)\,f'(t)\,dt
\\
&\leq\frac{w'(r_p)}{w(r_p)}-\frac{f'(r_p)}{m-1}+\frac{1}{(m-1)\,w^2(r_p)}\int_0^r (w^2)'(t)\,\theta(t)\,dt
\\
&=\frac{w'(r_p)}{w(r_p)}+\frac{\theta(r_p)-f'(r_p)}{m-1}-\frac{1}{(m-1)\,w^2(r_p)}\int_0^r w^2(t)\,\theta'(t)\,dt
\\
&\leq \frac{w'(r_p)}{w(r_p)}-\frac{1}{m-1}\,f'(r_p)+\frac{1}{m-1}\,\theta(r_p).
\end{split}
\end{equation*}
This is equivalent to inequality \eqref{hessianineqinfty} at the point $p$.

Finally, inequalities \eqref{hessF1} and \eqref{hessF2} follow from \eqref{hess1}, \eqref{hessianineqinfty} and the definition of weighted Laplacian in \eqref{eq:flaplacian}, with the help of the identities
\begin{align*}
\text{Hess}\,(F\circ r)(X,X)&=F''(r)\,\escpr{\nabla r,X}^2+F'(r)\,(\text{Hess}\,r)(X,X),
\\
(\text{Hess}\,r)(\nabla r,\nabla r)&=0,
\end{align*}
where $X$ is any vector field on $M-(\text{cut}(o)\cup\{o\})$.
\end{proof}

\begin{remark}
\label{invhyp1}
By assuming additional conditions on the derivatives $w^{k)}(0)$ the lower bounds in Theorem~\ref{th:hessianineqr} for the weighted sectional curvatures of radial planes and the upper bounds for $\text{Hess}\,r$ are related to the sectional curvatures of the radial planes and the Hessian of the distance function from $o_w$ in some $w$-model space.  Indeed, in  $(M^m_w,g_w)$ the radial sectional curvatures equal $-\frac{w''(r)}{w(r)}$, whereas the value of $\text{Hess}\,r$ along any unit direction orthogonal to the radial one is $\frac{w'(r)}{w(r)}$.
\end{remark}

\begin{remark}
The comparisons for $\text{Hess}\,r$ in Theorem~\ref{th:hessianineqr} extend in $M-(\text{cut}(o)\cup\{o\})$ for any tangent vector $y$. Write $y=x+\la\,(\nabla r)_p$, where $x\in T_pM$ is orthogonal to $(\nabla r)_p$ and $\la:=\escpr{y,(\nabla r)_p}$. Since $(\text{Hess}\,r)_p(u,(\nabla r)_p)=0$ for any vector $u\in T_pM$, it follows that
\[
(\text{Hess}\,r)_p(y,y)=(\text{Hess}\,r)_p(x,x),
\]  
so that an estimate for $(\text{Hess}\,r)_p(x,x)$ leads to an estimate for $(\text{Hess}\,r)_p(y,y)$. In the particular case of \eqref{hess1}, we deduce that the inequality
\begin{equation}
\label{eq:setenil}
(\text{Hess}\,r)(y,y)\leq\big(|y|^2-\escpr{y,\nabla r}^2\big)\bigg(\frac{m+q-1}{m-1}\,\frac{w'(r)}{w(r)}-\frac{1}{m-1}\,\escpr{\nabla h,\nabla r}\bigg)
\end{equation}
holds on $M-(\text{cut}(o)\cup\{o\})$ for any tangent vector $y$.
\end{remark}

\section{Intrinsic comparison results}
\label{intcompan}

In this section we consider a complete non-compact weighted manifold and present three series of results where, assuming lower or upper bounds for some of the weighted or unweighted curvatures of the manifold, we provide estimates for the weighted volumes and capacities of metric balls about a given point. From the capacity estimates we will deduce conclusions about the parabolicity or hyperbolicity of the manifold.

Along this section we will denote by $B_R$ (resp. $B^w_R$) the open metric ball of radius $R>0$ centered at a fixed point $o\in M^m$ (resp. at the pole $o_w\in M^m_w$).

\subsection{Comparisons under a lower bound on the $\infty$-Bakry-\'Emery Ricci curvatures}
\label{infiniteBakry}\

The first result of this section shows that, as happens in the Riemannian setting, estimates for the weighted Laplacian of the distance function allow to establish bounds for weighted isoperimetric quotients and volumes of metric balls. Our proof goes in the line of  \cite{Pa,MaP1,MP5}, where comparisons for the unweighted isoperimetric quotient of extrinsic balls of submanifolds were obtained. Previous comparisons involving weighted volumes and quotients of weighted volumes (but not weighted isoperimetric quotients) when $\text{Ric}^h_\infty$ is bounded from below can be found in \cite{qian2,lott,morganmyers,WW,PRRS,MR3197658}. 

\begin{theorem} 
\label{th:isoper_infty}
Let $(M^m,g,e^h)$ be a complete and non-compact weighted manifold, $r:M\rightarrow [0,\infty[$ the distance function from a point $o\in M$, and $w(s)$ a smooth function with $w(0)=0$, $w'(0)=1$ and $w(s)>0$ for all $s>0$.  Suppose that the following conditions are fulfilled:
\begin{itemize}
\item [a) ] Every radial $\infty$-Bakry-\'Emery Ricci
curvature is bounded as
\[
{\rm Ric}^h_\infty(\nabla r,\nabla r) \geq -(m-1)\,\frac{w''(r)}{w(r)} \ \text{ on }M-(\rm{cut}(o)\cup\{o\}).
\]
\item [b) ] There exists a non-decreasing $C^1$ function $\theta:[0,\infty[\to\rr$ such that  
$$\langle\nabla h,\nabla r\rangle\,w'(r)\leq \theta(r)\,w'(r) \ \text{ on }M-(\rm{cut}(o)\cup\{o\}).$$ 
\end{itemize}
Then, we have
\begin{equation} 
\label{ineq:quotient}
\frac{\Vol_h(B_R)}{\Vol_h(\partial B_R)} \geq
\frac{\Vol_{f}(B^w_R)}{\Vol_{f}(\partial B^w_R)}, \ \text{ for almost any }R>0,
\end{equation}
where $\Vol_{f}$ stands for the weighted volume in the $(w,f)$-model space $(M^m_w,g_w,e^{f(r)})$ with $f(r):=\int_0^r\theta(s)\,ds$. As a consequence
\begin{align}
\label{ineq:volume}
\Vol_h(B_R)&\leq e^{h(o)}\,\Vol_{f} (B^w_R), \ \text{ for any }R>0,
\\
\label{ineq:volume2}
\Vol_h(\partial B_R)&\leq e^{h(o)}\,\Vol_{f} (\partial B^w_R), \ \text{ for almost any }R>0.
\end{align}
In particular, if $\Vol_f(M_w)<\infty$, then $\Vol_h(M)<\infty$.
\end{theorem}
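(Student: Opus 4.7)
The plan is to derive \eqref{ineq:quotient} by transplanting to $B_R$ the weighted mean exit time of the comparison ball in the model, and then to obtain \eqref{ineq:volume}--\eqref{ineq:volume2} from \eqref{ineq:quotient} by a standard volume-integration argument. Set $f(r):=\int_0^r\theta(s)\,ds$, so that $(M^m_w,g_w,e^{f(r)})$ is the model relevant to the statement. Using \eqref{eq:sabika} with right-hand side $-1$ in place of $0$, the radial mean exit time function $\psi:[0,R]\to\mathbb{R}$ of the ball $B^w_R$ is characterized by
\begin{equation*}
\psi''(r)+\psi'(r)\left((m-1)\,\frac{w'(r)}{w(r)}+\theta(r)\right)=-1,\qquad \psi(R)=0,
\end{equation*}
together with a regularity condition at $r=0$. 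Multiplying by $w^{m-1}(r)\,e^{f(r)}$ and integrating from $0$ to $r$ (using $w^{m-1}(0)=0$) gives
\begin{equation*}
\psi'(r)=-\frac{\int_0^r w^{m-1}(s)\,e^{f(s)}\,ds}{w^{m-1}(r)\,e^{f(r)}}\leq 0,
\end{equation*}
and in particular $\psi'(R)=-\Vol_f(B^w_R)/\Vol_f(\partial B^w_R)$.

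I would next transplant $\psi$ to $M$ as the radial function $\psi\circ r$ on $\overline{B}_R$. On $B_R-(\operatorname{cut}(o)\cup\{o\})$ the chain rule gives the identity $\Delta^h(\psi\circ r)=\psi''(r)+\psi'(r)\,\Delta^h r$, and since $\psi'\leq 0$, the Laplacian estimate \eqref{lap2} of Theorem~\ref{th:laplaceineqr}(b) multiplied by $\psi'(r)$ reverses sign, yielding
\begin{equation*}
\Delta^h(\psi\circ r)\geq\psi''(r)+\psi'(r)\left((m-1)\,\frac{w'(r)}{w(r)}+\theta(r)\right)=-1.
\end{equation*}
Integrating this over $B_R$ against $dv_h$ and applying the weighted divergence theorem to $\nabla(\psi\circ r)=\psi'(r)\,\nabla r$, whose inner product with the outer unit normal $\nabla r$ along $\partial B_R$ equals $\psi'(R)$, one arrives at
\begin{equation*}
-\Vol_h(B_R)\leq\psi'(R)\,\Vol_h(\partial B_R)=-\frac{\Vol_f(B^w_R)}{\Vol_f(\partial B^w_R)}\,\Vol_h(\partial B_R),
\end{equation*}
which is precisely \eqref{ineq:quotient}.

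The main obstacle is that $\psi\circ r$ is only $C^2$ off the cut locus of $o$, so the divergence theorem must be justified with some care. Two classical remedies work: either Calabi's trick of working with a slightly shifted base point and passing to the limit, or the observation that $\psi\circ r$ is locally Lipschitz and the one-sided bound $\Delta^h(\psi\circ r)\geq -1$ persists as a distributional (Radon-measure) inequality because $\operatorname{cut}(o)$ has zero Hausdorff $m$-measure. Both routes are by now standard in Riemannian Laplacian-comparison arguments and adapt verbatim to the weighted setting, the only change being the replacement of $\Delta$ and $dv$ by $\Delta^h$ and $dv_h$.

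Finally, set $v(R):=\Vol_h(B_R)$, so that $v'(R)=\Vol_h(\partial B_R)$ for almost every $R>0$ by the coarea formula. Inequality \eqref{ineq:quotient} then reads $(\log v)'(R)\leq (\log\Vol_f(B^w_R))'$ almost everywhere. Since $w(t)\sim t$ and $f(0)=0$ near the origin, the leading-order Taylor expansion of $e^h$ around $o$ gives $\lim_{\rho\to 0^+} v(\rho)/\Vol_f(B^w_\rho)=e^{h(o)}$, so integrating the preceding differential inequality from $\rho$ to $R$ and letting $\rho\to 0^+$ produces \eqref{ineq:volume}. Substituting \eqref{ineq:volume} back into \eqref{ineq:quotient} yields \eqref{ineq:volume2}, and letting $R\to\infty$ in \eqref{ineq:volume} proves the final assertion $\Vol_h(M)<\infty$ whenever $\Vol_f(M_w)<\infty$.
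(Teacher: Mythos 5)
Your proposal is correct and follows essentially the same route as the paper: your $\psi$ is exactly the paper's transplanted function $\phi_R(s)=\int_s^R q_{w,f}(t)\,dt$ (note $\psi'=-q_{w,f}$), the bound $\Delta^h(\psi\circ r)\geq -1$ comes from the same comparison \eqref{lap2}, and the passage to \eqref{ineq:volume}--\eqref{ineq:volume2} via the a.e.\ differential inequality for $\log\Vol_h(B_R)$ and the limit $e^{h(o)}$ at the origin matches the paper's monotonicity argument for $\Vol_h(B_R)/\Vol_f(B^w_R)$. The only difference is the treatment of $\operatorname{cut}(o)$, where the paper runs an explicit weak-formulation argument (exhaustion of $M-\operatorname{cut}(o)$ with $\escpr{\nabla r,\nu_n}>0$ plus trapezoidal test functions) while you invoke Calabi's trick or the distributional persistence of the one-sided bound; both are standard, but be aware that the persistence rests on the sign $\psi'\leq 0$ together with the barrier structure of $r$ at the cut locus, not merely on $\operatorname{cut}(o)$ having measure zero.
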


\begin{proof}
We will prove the comparison for the weighted isoperimetric quotient by using the function solution of a Dirichlet-Poisson problem. 

Let $q_{w,f}:(0,\infty)\to\rr$ be the weighted isoperimetric quotient in $(M_w^m,g_w,e^{f(r)})$, which is given by
\[
q_{w,f}(t):=\frac{\Vol_f(B^w_t)}{\Vol_f(\ptl B^w_t)}=
\frac{\int_0^t w^{m-1}(s)\,e^{f(s)}\,ds}{w^{m-1}(t)\,e^{f(t)}}.
\]
Note that $q_{w,f}$ extends to $0$ as a $C^1$ function with $q_{w,f}(0)=0$ and $q'_{w,f}(0)=\frac{1}{m}$.

For a fixed number $R>0$, we define the $C^2$ function $\phi_R:[0,\infty[\to\rr$ by
\begin{equation}
\label{eq:function}
\phi_{R}(s):=\int_s^R q_{w,f}(t)\,dt.
\end{equation}
It is easy to check that
\begin{equation}
\label{poisson}
\begin{cases}
\phi_R''(s)+\phi_R'(s)\left((m-1)\,\frac{w'(s)}{w(s)}+\theta(s)\right)=-1 \ \text{ in } [0,\infty[,
\\
\phi_R(R)=0.
\end{cases}
\end{equation}

If we consider $v:=\phi_{R}\circ r$, then we obtain a radial function $v\in C^2(M-\text{cut}\{o\})$. Moreover, since $\phi_{R}'(s)\leq 0$, we infer from inequality (\ref{lapF2}) in Theorem \ref{th:laplaceineqr} that
\begin{equation}
\label{eq:strong}
\Delta^hv \geq -1 \ \text{ on } M-\text{cut}(o).
\end{equation}
Let us see that this implies
\begin{equation}
\label{eq:weak}
\int_M\escpr{\nabla v,\nabla\var}\,dv_h\leq\int_M\var\,dv_h, \ \text{ for any } \var\in H^1_0(M,dv_h) \ \text{with } \var\geq 0.
\end{equation}
For that we follow the approximation argument in \cite[Lem.~2.5]{prs-book}. Let $\{M_n\}_{n\in\nn}$ be an exhaustion of $M-\text{cut}(0)$ by smooth precompact (and nested) open sets such that $o\in M_n$ and the radial derivative with respect to the outer conormal $\nu_n$ along $\ptl M_n$ satisfies $\escpr{\nabla r,\nu_n}>0$. 
For a function $\var\in C^\infty_0(M)$ with $\var\geq 0$, equation \eqref{eq:strong} implies
\[
\int_{M_n}(\Delta^hv)\,\var\,dv_h\geq-\int_{M_n}\var\,dv_h, \ \text{ for any }n\in\nn.
\]
By taking into account that $\divv^h(\var\,\nabla v)=\var\,\Delta^hv+\escpr{\nabla v,\nabla\var}$, and applying the divergence theorem as in Lemma 2.1 of \cite{homostable}, we obtain
\[
\int_{M_n}(\Delta^hv)\,\var\,dv_h=\int_{\ptl M_n}\var\,\escpr{\nabla v,\nu_n}\,da_h-\int_{M_n}\escpr{\nabla v,\nabla\var}\,dv_h.
\]
The first integral at the right hand side is nonpositive since $\escpr{\nabla v,\nu_n}=\phi'_R(r)\,\escpr{\nabla r,\nu_n}$ along $\ptl M_n$. This shows that
\[
-\int_{M_n}\escpr{\nabla v,\nabla\var}\,dv_h\geq-\int_{M_n}\var\,dv_h, \ \text{ for any }n\in\nn.
\]
By passing to the limit we get \eqref{eq:weak} for $\var\in C^\infty_0(M)$ by using the dominated convergence theorem and the fact that $\text{cut}(o)$ has null weighted volume. By standard approximation the inequality also holds for any $\var\in H^1_0(M,dv_h)$ with $\var\geq 0$.

Next, for any $\eps>0$ small enough, we define the function $\var_\eps:=\rho_\eps\circ r$, where
\[
\rho_\eps(t):=
\begin{cases}
\frac{t}{\eps}\,\,\,&\text{if $0\leq t\leq\eps$},
\\
1\,\,\,&\text{if $\eps\leq t\leq R-\eps$}, 
\\
\frac{R-t}{\eps}\,\,\,&\text{if $R-\eps\leq t\leq R$},
\\
0\,\,\,&\text{if $t\geq R$}.
\end{cases}
\]
Clearly $\var_\eps\in H^1_0(M,dv_h)$ with $\var_\eps\geq 0$. Inequality \eqref{eq:weak} and some computations yield
\[
\frac{1}{\eps}\int_{B_R-B_{R-\eps}}q_{w,f}(r)\,dv_h-\frac{1}{\eps}\int_{B_\eps}q_{w,f}(r)\,dv_h\leq\int_{B_R}\var_\eps\,dv_h, \ \text{ for any }\eps>0.
\]
This inequality can be written as
\[
\frac{\eta(R)-\eta(R-\eps)}{\eps}-\frac{\eta(\eps)}{\eps}\leq\int_{B_R}\var_\eps\,dv_h, \ \text{ for any }\eps>0,
\]
where
\[
\eta(s):=\int_{B_s}q_{w,f}(r)\,dv_h=\int_0^s q_{w,f}(t)\Vol_h(\ptl B_t)\,dt,
\]
and we have used the coarea formula. Note that $\eta'(0)=0$ and $\eta'(s)=q_{w,f}(s)\Vol_h(\ptl B_s)$ for almost any $s>0$. By taking limits above when $\eps\to 0^+$ we conclude that inequality
\[
\Vol_h(B_R)\geq q_{w,f}(R)\Vol_h(\ptl B_R)=\frac{\Vol_f(B^w_R)}{\Vol_f(\ptl B^w_R)}\,\Vol_h(\ptl B_R)
\]
holds for almost any $R>0$. This proves \eqref{ineq:quotient}.

Now, consider the function
\[
F(R):=\frac{\Vol_h(B_R)}{\Vol_{f}(B^w_R)}, \ \ \text{ for any } R>0.
\]
Taking into account that $\frac{d}{dR} \Vol_h(B_R)=\Vol_h
(\partial B_R)$ for almost any $R>0$, that $\frac{d}{dR} \Vol_{f}(B^w_R)=\Vol_{f}(\partial B^w_R)$ for any $R>0$, and inequality \eqref{ineq:quotient}, we easily deduce that $F(R)$ is non-increasing, and so
\[
F(R)\leq \lim_{R\rightarrow 0^+} F(R)=e^{h(o)}, \ \text{ for any }R>0,
\]
which proves \eqref{ineq:volume}. The last equality above comes from the asymptotic expansion of weighted volumes for small geodesic balls \cite[Ch.~3]{bayle-thesis}, which implies that
\[
\Vol_h(B_t)\sim c_m\,t^m\, e^{h(o)} \ \text{ and } \  \Vol_f(B^w_t)\sim c_m\,t^m,
\]
where $c_m$ is a positive dimensional constant and we have used that $f(o_w)=0$. 
Finally, from \eqref{ineq:quotient} and \eqref{ineq:volume}, we conclude that
\begin{equation*}
\Vol_h(\partial B_R)\leq \frac{\Vol_h(B_R)}{\Vol_{f}(B^w_R)}\,\Vol_{f} (\partial
B^w_R)\leq  e^{h(o)}\,\Vol_{f} (\partial B^w_R),
\end{equation*}
for almost any $R>0$. This proves \eqref{ineq:volume2} and completes the proof.
\end{proof}

\begin{remark}
\label{re:paluego}
When the point $o\in M$ is a pole then inequalities \eqref{ineq:quotient} and \eqref{ineq:volume2} holds for any $R>0$. In this case $\text{cut}(o)=\emptyset$ and the proof of \eqref{ineq:quotient} is easier. Indeed, by integrating in \eqref{eq:strong} and applying the divergence theorem, it follows that
\begin{align*}
\Vol_h(B_R)&\geq -\int_{B_R}
\Delta^h v\,dv_h=-\int_{B_R}\divv^h\nabla v\,dv_h
\\
&=-\int_{\partial B_R} \langle\nabla v,\nabla r\rangle\,d
a_h=q_{w,f}(R)\,\Vol_h(\partial B_R),
\end{align*}
which proves \eqref{ineq:quotient} for any $R>0$. From here we deduce \eqref{ineq:volume2} as in the general case.
\end{remark}

In the next result we show a capacity comparison and a parabolicity criterion for weighted manifolds under a lower bound on $\text{Ric}^h_\infty$.

\begin{theorem}
\label{comp:inftycapacity} 
Let $(M^m,g,e^h)$ be a complete and non-compact weighted manifold, $r:M\rightarrow [0,\infty[$ the distance function from a point $o\in M$, and $w(s)$ a smooth function such that $w(0)=0$, $w'(0)=1$ and $w(s)>0$ for all $s>0$.  Suppose that the following conditions are fulfilled:
\begin{itemize}
\item [a) ] Every radial $\infty$-Bakry-\'Emery Ricci curvature is bounded as
\[
{\rm Ric}^h_\infty(\nabla r,\nabla r) \geq -(m-1)\,\frac{w''(r)}{w(r)} \ \text{ on }M-(\rm{cut}(o)\cup\{o\}).
\]
\item [b) ] There exists a non-decreasing $C^1$ function $\theta:[0,\infty[\to\rr$ such that  
$$\langle\nabla h,\nabla r\rangle\,w'(r)\leq \theta(r)\,w'(r) \ \text{ on }M-(\rm{cut}(o)\cup\{o\}).$$ 
\end{itemize}
Then, for almost any $\rho>0$, we have  
\begin{equation}
\label{isopCap1}
\frac{\C^h (B_\rho)}{{\rm Vol}_h(\partial B_\rho)}\leq \frac{\C^f(B^w_\rho)}{{\rm Vol}_f(\partial B^w_\rho)}.
\end{equation}
As a consequence
\begin{equation}
\label{Capcomp}
\C^h(B_\rho) \leq e^{h(o)}\,\C^{f} (B^w_\rho),
\end{equation}
where $\C^{f} (B^w_\rho)$ denotes the weighted capacity of the metric ball $B^w_\rho$ in the weighted $(w,f)$-model space $(M^m_w,g_w,e^{f(r)})$ with $f(r):=\int_0^r \theta (s)\,ds$.

Moreover, if
\begin{equation}
\label{eq:integral}
\int_\rho^\infty w^{1-m} (s)\, e^{-f(s)}\,ds=\infty,
\end{equation}
for some $\rho>0$, then $M$ is $h$-parabolic.
\end{theorem}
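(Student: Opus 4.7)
The strategy adapts the maximum-principle argument from Theorem~\ref{th:isoper_infty}, but applies it to the transplanted capacity potential rather than to a mean exit time. For fixed radii $0<\rho<R$ set $A_{\rho,R}:=B_R-\overline{B}_\rho$, and consider the $f$-capacity potential $\phi_{\rho,R,f}$ of $(B^w_\rho,B^w_R)$ furnished by Theorem~\ref{capacityweighted}; it is strictly decreasing on $[\rho,R]$, equals $1$ at $\rho$ and $0$ at $R$, and satisfies the ODE~\eqref{eq:sabika} with $f'=\theta$. I would transplant it to $M$ by defining $v:=\phi_{\rho,R,f}\circ r$ on $A_{\rho,R}$. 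Since $\phi'_{\rho,R,f}\leq 0$, inequality~\eqref{lapF2} combined with this ODE yields the pointwise bound $\Delta^h v\geq 0$ on $A_{\rho,R}-\mathrm{cut}(o)$. Let $u$ be the $h$-capacity potential of $(B_\rho,B_R)$ in $M$, which is smooth in $A_{\rho,R}$ and satisfies $u=v$ on the whole of $\partial A_{\rho,R}$.

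The crux is the comparison $u\geq v$ on $A_{\rho,R}$, and this is where the main obstacle lies: since $v$ is only $C^2$ outside $\mathrm{cut}(o)$, the pointwise inequality must be promoted to the distributional one $\int_{A_{\rho,R}}\escpr{\nabla v,\nabla\var}\,dv_h\leq 0$ for every nonnegative $\var\in H^1_0(A_{\rho,R},dv_h)$. My plan is to overcome this with the exhaustion trick already used in the proof of Theorem~\ref{th:isoper_infty}: take smooth precompact sets $\{M_n\}$ exhausting $M-\mathrm{cut}(o)$ with $\escpr{\nabla r,\nu_n}>0$ on $\partial M_n$, apply the divergence theorem to $\Delta^h v$ on $M_n\cap A_{\rho,R}$ against a nonnegative test function, and use that the boundary contribution is nonpositive because $\escpr{\nabla v,\nu_n}=\phi'_{\rho,R,f}(r)\,\escpr{\nabla r,\nu_n}\leq 0$. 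Passing to the limit and invoking that $\mathrm{cut}(o)$ has null weighted volume gives the distributional inequality. A Stampacchia-type truncation with $\var=(v-u)^+\in H^1_0(A_{\rho,R},dv_h)$, together with $h$-harmonicity of $u$, will then force $|\nabla(v-u)|=0$ on $\{v>u\}$, and hence $u\geq v$ throughout $A_{\rho,R}$.

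Once $u\geq v$ is secured, with equality on $\partial B_\rho$, a Hopf-type argument gives $\escpr{\nabla(u-v),\nabla r}\geq 0$ almost everywhere on $\partial B_\rho$; since both radial derivatives are nonpositive, this upgrades to $|\escpr{\nabla u,\nabla r}|\leq|\phi'_{\rho,R,f}(\rho)|$ on $\partial B_\rho$. Plugging this into~\eqref{eq:capint} and using the explicit identity~\eqref{fcapacitymodel} for the model capacity yields
\[
\frac{\C^h(B_\rho,B_R)}{\Vol_h(\partial B_\rho)}\leq|\phi'_{\rho,R,f}(\rho)|=\frac{\C^f(B^w_\rho,B^w_R)}{\Vol_f(\partial B^w_\rho)}.
\]
Letting $R\to\infty$ and invoking~\eqref{eq:sensible} proves~\eqref{isopCap1}, while combining~\eqref{isopCap1} with the area estimate~\eqref{ineq:volume2} from Theorem~\ref{th:isoper_infty} yields~\eqref{Capcomp}. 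For the parabolicity assertion, hypothesis~\eqref{eq:integral} forces $\C^f(B^w_\rho)=0$ via~\eqref{fcapacitymodel}; then \eqref{Capcomp} gives $\C^h(B_\rho)=0$, and Theorem~\ref{theorGrig} concludes that $M$ is $h$-parabolic.
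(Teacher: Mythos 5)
Your transplanted function $v=\phi_{\rho,R,f}\circ r$, the inequality $\Delta^h v\geq 0$ via \eqref{lapF2}, and the exhaustion argument promoting it to $\int\escpr{\nabla v,\nabla\var}\,dv_h\leq 0$ all match the paper. The divergence occurs afterwards, and it is where your argument has a genuine gap. You compare the $h$-capacity potential $u$ pointwise with $v$ (which is fine: the Stampacchia truncation with $(v-u)^+$ does give $u\geq v$), and then you extract the estimate from a Hopf-type comparison of normal derivatives along $\ptl B_\rho$ together with the flux formula \eqref{eq:capint}. But \eqref{eq:capint} is only valid when $\ptl K$ is a \emph{smooth} hypersurface, and in a general complete manifold the metric spheres $\ptl B_\rho$ and $\ptl B_R$ meet $\text{cut}(o)$ and need not be smooth --- not even for almost every radius. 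On such spheres $\nabla r$ is undefined at cut points, $u$ need not be $C^1$ up to the boundary, and neither the identity $\C^h(B_\rho,B_R)=\int_{\ptl B_\rho}|\nabla u|\,da_h$ nor the normal-derivative comparison is available. This is precisely the obstruction the paper's proof is designed to avoid: instead of comparing $u$ with $v$, it extends $v$ by $1$ on $B_\rho$ to get an admissible competitor $\overline{v}$, so that $\C^h(B_\rho,B_R)\leq\int_{A_{\rho,R}}|\nabla v|^2\,dv_h$ by the variational definition of capacity, and then bounds this energy by $|\phi_{\rho,R,f}'(\rho)|\,\Vol_h(\ptl B_\rho)$ using the weak inequality \eqref{eq:weak2} tested against Lipschitz radial cutoffs and the coarea formula --- no regularity of the metric spheres enters, and the ``almost every $\rho$'' comes only from Lebesgue differentiation of $s\mapsto\Vol_h(B_s)$.

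Your route is not wrong in spirit --- it is exactly the argument the paper uses in the remark following the theorem for the case where $o$ is a pole (so $\text{cut}(o)=\emptyset$ and all spheres are smooth), and again in Theorem~\ref{hyperbolicity-h''} for the reverse inequality, where a pole is assumed for this very reason. To make your proof work in the stated generality you would have to replace the flux/Hopf step by the energy-competitor step; as written, the final inequality is not justified. The remaining conclusions (letting $R\to\infty$ via \eqref{eq:sensible}, deducing \eqref{Capcomp} from \eqref{ineq:volume2}, and the parabolicity claim via \eqref{fcapacitymodel} and Theorem~\ref{theorGrig}) agree with the paper.
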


\begin{proof}
We first prove inequality (\ref{isopCap1}). For any numbers $\rho,R>0$ with $\rho<R$, let $\phi_{\rho,R,f}:[0,\infty[\to\rr$ be the function defined in \eqref{fsolmodel}. This provides the solution to the problem \eqref{eqfDirModel2} in the $(w,f)$-model space $(M^m_{w},g_w,e^{f(r)})$. In particular, equality \eqref{eq:sabika} is satisfied on $(0,\infty)$. Compose this function with the distance $r$ to obtain a radial function $v:=\phi_{\rho,R,f}\circ r$. It is clear that $v\in C^2(M-\text{cut}(o))$ such that $v=1$ in $\ptl B_\rho$, $v=0$ in $\ptl B_R$ and $0\leq v\leq 1$ in the annulus $A_{\rho,R}:=B_R-\overline{B}_\rho$. Since $\phi_{\rho,R,f}'\leq 0$, we can use inequality (\ref{lapF2}) in Theorem \ref{th:laplaceineqr} to deduce
\[
\Delta^h v \geq\phi_{\rho,R,f}''(r)+\phi_{\rho,R,f}'(r)\left((m-1)\,\frac{w'(r)}{w(r)}+\theta(r)\right)=0 \ \text{ on }M-\rm{cut}(o).
\]
From here, we can reproduce the approximation argument in the proof of Theorem~\ref{th:isoper_infty} to infer that
\[
\int_M\escpr{\nabla v,\nabla\var}\,dv_h\leq 0, \ \text{ for any } \var\in H^1_0(M,dv_h) \ \text{with } \var\geq 0.
\]
In particular, this inequality holds when $\var:=\psi\,v$, for any function $\psi\in H^1_0(M,dv_h)$ such that $\psi\geq 0$ and $\psi=0$ in $M-A_{\rho,R}$. This implies that
\begin{equation}
\label{eq:weak2}
\int_{M}\psi\,|\nabla v|^2\,dv_h\leq-\int_{M}v\,\escpr{\nabla v,\nabla\psi}\,dv_h.
\end{equation}

Now, we define a function $\overline{v}\in H^1_0(B_R,dv_h)$ by
\[
\overline{v}:=
\begin{cases}
v\,\,\,&\text{in $A_{\rho,R}$},
\\
1\,\,\,&\text{in $B_\rho$}. 
\end{cases}
\]
From the definition of weighted capacity, it follows that
\begin{equation}
\label{eq:fusila}
\C^h(B_\rho,B_R)\leq\int_{B_R}|\nabla\overline{v}|^2\,dv_h=\int_{A_{\rho,R}}|\nabla v|^2\,dv_h.
\end{equation}
Next, we will estimate the last integral with the help of \eqref{eq:weak2}.

For any $\eps>0$ small enough, we take the function $\psi_\eps:=\rho_\eps\circ r$, where
\[
\rho_\eps(t):=
\begin{cases}
0\,\,\,&\text{if $0\leq t\leq\rho$},
\\
\frac{t-\rho}{\eps}\,\,\,&\text{if $\rho\leq t\leq\rho+\eps$},
\\
1\,\,\,&\text{if $\rho+\eps\leq t\leq R-\eps$}, 
\\
\frac{R-t}{\eps}\,\,\,&\text{if $R-\eps\leq t\leq R$},
\\
0\,\,\,&\text{if $t\geq R$}.
\end{cases}
\]
It is clear that $\psi_\eps\in H^1_0(M,dv_h)$ with $\psi_\eps\geq 0$ and $\psi_\eps=0$ on $M-A_{\rho,R}$. Hence, inequality \eqref{eq:weak2} and some computations lead to
\begin{align*}
\int_{M}\psi_\eps\,|\nabla v|^2\,dv_h&\leq
\frac{1}{\eps}\int_{B_R-B_{R-\eps}}\phi_{\rho,R,f}(r)\,\phi_{\rho,R,f}'(r)\,dv_h
\\
&-\frac{1}{\eps}\int_{B_{\rho+\eps}-B_\rho}\phi_{\rho,R,f}(r)\,\phi_{\rho,R,f}'(r)\,dv_h, \ \text{ for any }\eps>0.
\end{align*}
This inequality can be written as
\[
\int_{M}\psi_\eps\,|\nabla v|^2\,dv_h\leq\frac{\eta(R)-\eta(R-\eps)}{\eps}-\frac{\eta(\rho+\eps)-\eta(\rho)}{\eps}, \ \text{ for any }\eps>0,
\]
where $\eta:[0,\infty[\to\rr$ is given by
\[
\eta(s):=\int_{B_s}\phi_{\rho,R,f}(r)\,\phi_{\rho,R,f}'(r)\,dv_h=\int_0^s \phi_{\rho,R,f}(t)\,\phi_{\rho,R,f}'(t)\Vol_h(\ptl B_t)\,dt,
\]
and we have used the coarea formula. By taking limits above when $\eps\to 0^+$, and having in mind the equalities $\phi_{\rho,R,f}(\rho)=1$, $\phi_{\rho,R,f}(R)=0$ together with equations \eqref{eq:fusila} and \eqref{fcapacitymodel}, we conclude that inequality
\begin{align*}
\C^h(B_\rho,B_R)&\leq\int_{A_{\rho,R}}|\nabla v|^2\,dv_h\leq |\phi'_{\rho,R,f}(\rho)|\,\Vol_h(\ptl B_\rho)
\\
&=\frac{\text{Cap}^f(B^w_\rho,B^w_R)}{\Vol_f(\ptl B^w_\rho)}\,\Vol_h(\ptl B_\rho),
\end{align*}
holds for almost every $\rho,R>0$ with $\rho<R$. Thanks to \eqref{eq:sensible} the inequality \eqref{isopCap1} follows by taking limits in the previous estimate when $R$ goes to infinity. On the other hand, the comparison in \eqref{Capcomp} comes from \eqref{isopCap1} and \eqref{ineq:volume2}.

Finally, suppose that \eqref{eq:integral} is satisfied for some $\rho>0$. This implies by equation \eqref{fcapacitymodel} that $\C^f(B^w_\rho)=0$. We can admit that \eqref{isopCap1} holds for the value $\rho$, so that $\C^h(B_\rho)=0$. From Theorem~\ref{theorGrig} we conclude that $M$ is $h$-parabolic.
\end{proof}

\begin{remark}
If the point $o\in M$ is a pole, then $\text{cut}(o)=\emptyset$, and the inequalities \eqref{isopCap1} and \eqref{Capcomp} are satisfied for any $\rho>0$. Indeed, by using the divergence theorem, the inequality $\Delta^hv\geq 0$ and the equalities $v=1$ in $\ptl B_\rho$ and $v=0$ in $\ptl B_R$, we get
\begin{equation*}
\begin{split}
\C^h(B_\rho,B_R)&\leq\int_{B_R}|\nabla\overline{v}|^2\,dv_h=\int_{A_{\rho,R}}|\nabla v|^2\,dv_h
\\
&=\int_{A_{\rho,R}}\divv^h(v\,\nabla v)\,dv_h-\int_{A_{\rho,R}}v\,\Delta^h v\,dv_h
\\
&\leq\int_{\ptl A_{\rho,R}}v\,\escpr{\nabla v,\nu}\,da_h=|\phi_{\rho,R,f}'(\rho)|\,\Vol_h(\ptl B_\rho).
\end{split}
\end{equation*}
From here we can deduce \eqref{isopCap1} and \eqref{Capcomp} as in the general case.
\end{remark}

\subsection{Comparisons under bounds on the Riemannian curvatures}\label{Riemsec}\

Here we establish estimates for the isoperimetric quotient of balls together with parabolicity and hyperbolicity criteria by assuming radial bounds on some Riemannian curvatures of the ambient manifold and on the radial derivatives of the weight.

We first obtain for the isoperimetric quotient of balls the same inequality as in \eqref{ineq:quotient}  by means of different hypotheses.

\begin{theorem} 
\label{volumeRiem1} 
Let $(M^m,g,e^h)$ be a complete and non-compact weighted manifold, $r:M\rightarrow [0,\infty[$ the distance function from a point $o\in M$, and $w(s)$ a smooth function such that $w(0)=0$, $w'(0)=1$, and $w(s)>0$ for all $s>0$.  Suppose that the following conditions are fulfilled:
\begin{itemize}
\item[a) ] Every radial Ricci curvature in $(M,g)$ is bounded as
$${\rm Ric} (\nabla r,\nabla r) \geq -(m-1)\,\frac{w''(r)}{w(r)} \ \text{ on }M-(\emph{cut}(o)\cup\{o\}).$$
\item[b) ] There exists a continuous function $\theta:[0,\infty[\to\rr$ such that
$$\langle \nabla h,\nabla r\rangle \leq \theta (r) \ \text{ on }M-(\emph{cut}(o)\cup\{o\}).$$
\end{itemize}
Then, we have
\begin{equation*} 
\frac{\Vol_h(B_R)}{\Vol_h(\partial B_R)} \geq\frac{\Vol_{f}(B^w_R)}{\Vol_{f}(\partial B^w_R)}, \ \text{ for almost any }R>0,
\end{equation*}
where $\Vol_{f}$ stands for the weighted volume in the $(w,f)$-model space $(M^m_w,g_w,e^{f(r)})$ with $f(r):=\int_0^r\theta(s)\,ds$. As a consequence
\begin{align*}
\Vol_h(B_R)&\leq e^{h(o)}\,\Vol_{f} (B^w_R), \ \text{ for any }R>0,
\\
\Vol_h(\partial B_R)&\leq e^{h(o)}\,\Vol_{f} (\partial B^w_R), \ \text{ for almost any }R>0.
\end{align*}
In particular, if $\Vol_f(M_w)<\infty$, then $\Vol_h(M)<\infty$.
\end{theorem}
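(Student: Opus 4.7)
The plan is to parallel the proof of Theorem \ref{th:isoper_infty}, substituting its weighted Laplacian comparison with one tailored to the present hypotheses. The essential observation is that (a) is precisely the classical Riemannian Ricci lower bound, which by the standard Greene--Wu / Zhu comparison (the unweighted case $h=0$ of Theorem \ref{th:laplaceineqr}) yields $\Delta r \leq (m-1)\,\frac{w'(r)}{w(r)}$ on $M-(\text{cut}(o)\cup\{o\})$. Combining this with (b) through $\Delta^h r = \Delta r + \escpr{\nabla h,\nabla r}$ produces the pointwise estimate
\[
\Delta^h r \leq (m-1)\,\frac{w'(r)}{w(r)} + \theta(r)
\]
on $M-(\text{cut}(o)\cup\{o\})$. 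In contrast with Theorem \ref{th:laplaceineqr}(b), this derivation uses no integration by parts involving $h$; the only requirements on $\theta$ are continuity and the bound in (b). In particular, neither monotonicity nor $C^{1}$ regularity of $\theta$ is needed.

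Next, I would promote this to an inequality for $\Delta^h (F\circ r)$ via the identity $\Delta^h(F\circ r) = F''(r) + F'(r)\,\Delta^h r$, so that for $F' \leq 0$ the sign reverses and I obtain $\Delta^h(F\circ r) \geq F''(r) + F'(r)\big((m-1)\,\frac{w'(r)}{w(r)} + \theta(r)\big)$. I then take
\[
\phi_R(s) := \int_s^R q_{w,f}(t)\,dt, \qquad q_{w,f}(t) = \frac{\Vol_f(B^w_t)}{\Vol_f(\ptl B^w_t)},
\]
exactly as in \eqref{eq:function}, so that $\phi_R'\leq 0$ and $\phi_R$ solves the Poisson-type ODE \eqref{poisson}. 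Setting $v := \phi_R \circ r \in C^2(M-\text{cut}(o))$, the previous Laplacian inequality together with \eqref{poisson} gives $\Delta^h v \geq -1$ on $M-\text{cut}(o)$.

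From this point onwards the argument is a direct transcription of the one in Theorem \ref{th:isoper_infty}. Using an exhaustion $\{M_n\}$ of $M-\text{cut}(o)$ by smooth precompact sets with $\escpr{\nabla r, \nu_n} > 0$ on $\ptl M_n$, the divergence theorem and the sign of $\phi_R'$ promote the pointwise bound on $\Delta^h v$ to the weak inequality
\[
\int_M \escpr{\nabla v, \nabla \var}\,dv_h \leq \int_M \var\,dv_h \ \text{ for every nonnegative } \var \in H^1_0(M,dv_h).
\]
Testing against the bump functions $\var_\eps = \rho_\eps \circ r$ of that proof and letting $\eps \to 0^+$ via the coarea formula yields $\Vol_h(B_R) \geq q_{w,f}(R)\,\Vol_h(\ptl B_R)$ for a.e.\ $R>0$, which is exactly the isoperimetric quotient comparison.

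The three remaining assertions then follow from the same monotonicity argument as in Theorem \ref{th:isoper_infty}: the quotient $F(R):=\Vol_h(B_R)/\Vol_f(B^w_R)$ is non-increasing by the isoperimetric comparison and the a.e.\ identities for $\frac{d}{dR}\Vol_h(B_R)$ and $\frac{d}{dR}\Vol_f(B^w_R)$, while the asymptotic expansions $\Vol_h(B_t)\sim c_m\,t^m\,e^{h(o)}$ and $\Vol_f(B^w_t)\sim c_m\,t^m$ as $t\to 0^+$ give $\lim_{R\to 0^+}F(R) = e^{h(o)}$. Combining these two facts produces the ball and sphere inequalities, and letting $R\to\infty$ yields $\Vol_h(M)\leq e^{h(o)}\Vol_f(M_w)$. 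The only step that differs from the proof of Theorem \ref{th:isoper_infty} is the derivation of the Laplacian comparison, and under the present hypotheses that step is actually shorter rather than harder, so I do not expect any substantive obstacle.
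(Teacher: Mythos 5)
Your proposal is correct and follows essentially the same route as the paper: both reduce to the classical unweighted Laplacian comparison $\Delta r\leq (m-1)\,w'(r)/w(r)$ under the Ricci lower bound, add the term $\escpr{\nabla h,\nabla r}\leq\theta(r)$ (using $\phi_R'\leq 0$ to preserve the inequality) to obtain $\Delta^h v\geq -1$ for $v=\phi_R\circ r$, and then repeat verbatim the weak-inequality, coarea and monotonicity arguments from Theorem~\ref{th:isoper_infty}. Your observation that this avoids the integration by parts in $h$, and hence needs only continuity of $\theta$ rather than the monotone $C^1$ hypothesis, is exactly the point of stating this theorem separately.
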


\begin{proof}
For any $R>0$, we consider the function $\phi_R:[0,\infty[\to\rr$ defined in \eqref{eq:function}. This provides the solution to the Poisson problem in \eqref{poisson}. Since $\phi_{R}'(r)\leq 0$, by applying the inequalities for the Laplacian of radial functions in a Riemannian manifold with Ricci curvature bounded from below \cite{GreW, Zhu, Pa2}, or by using the inequality \eqref{lapF2} with $\theta=0$ and $h=0$, we get that the radial function $v:=\phi_{R}\circ r$ satisfies
\[
\Delta v\geq\phi_{R}''(r)+\phi'_{R}(r)\,(m-1)\,\frac{w'(r)}{w(r)} \ \text{ on }M-(\text{cut}(o)\cup\{o\}).
\]
If we combine this with the hypothesis in b), then we deduce that
$$\Delta^h v \geq -1 \ \text{ on } M-\text{cut}(o).$$
From here we can proceed as in the proof of Theorem \ref{th:isoper_infty} to show the claim.
\end{proof} 

In the next result we replace the Ricci curvature with the sectional curvature so that, by reversing the hypotheses in Theorem~\ref{volumeRiem1}, we deduce the opposite comparisons. In this way, we are able to show an upper bound for the weighted isoperimetric quotient of geodesic balls, which extends a result of Markvorsen and the second author for the unweighted case~\cite[Sect.~8]{MP5}.

\begin{theorem} 
\label{volumeRiem2} 
Let $(M^m,g,e^h)$ be a complete and non-compact weighted manifold, $r:M\rightarrow [0,\infty[$ the distance function from a point $o\in M$ and $w(s)$ a smooth function such that $w(0)=0$, $w'(0)=1$, and $w(s)>0$ for all $s>0$.  Suppose that the following conditions are fulfilled:
\begin{itemize}
\item[a) ] For any $p\in M-(\emph{cut}(o)\cup\{o\})$ and any plane $\sigma_p\subseteq T_pM$ containing $(\nabla r)_p$, we have
$${\rm Sec} (\sigma_p)\leq-\,\frac{w''(r)}{w(r)}.$$
\item[b) ] There exists a continuous function $\theta:[0,\infty[\to\rr$ such that
$$\langle \nabla h,\nabla r\rangle \geq \theta (r) \ \text{ on }M-(\emph{cut}(o)\cup\{o\}).$$
\end{itemize}
Then, for any $R>0$ such that $\overline{B}_R\cap\emph{cut}(o)=\emptyset$, we get
\begin{equation*} 
\frac{\Vol_h(B_R)}{\Vol_h(\partial B_R)} \leq
\frac{\Vol_{f}(B^w_R)}{\Vol_{f}(\partial B^w_R)},
\end{equation*}
and therefore
\begin{align*}
\Vol_h(B_R)&\geq e^{h(o)}\,\Vol_{f} (B^w_R),
\\
\Vol_h(\partial B_R)&\geq e^{h(o)}\,\Vol_{f} (\partial B^w_R),
\end{align*}
where $\Vol_{f}$ stands for the weighted volume in the $(w,f)$-model space $(M^m_w,g_w,e^{f(r)})$ with $f(r):=\int_0^r \theta(s)\,ds$.
\end{theorem}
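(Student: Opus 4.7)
The plan is to mirror the argument of Theorem \ref{volumeRiem1} with all inequalities reversed. The hypothesis $\overline{B}_R\cap\text{cut}(o)=\emptyset$ removes all regularity issues inside $B_R$, so the approximation machinery from the proof of Theorem \ref{th:isoper_infty} is not needed: one can apply the divergence theorem directly.

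First, I would establish a lower Laplacian comparison $\Delta^h r\ge(m-1)\,\frac{w'(r)}{w(r)}+\theta(r)$ on $B_R-\{o\}$. Since $\overline{B}_R$ avoids the cut locus, the standard Hessian comparison theorem in the direction opposite to \eqref{eq:seque} applies: the upper bound $\mathrm{Sec}(\sigma_p)\le-w''(r)/w(r)$ along radial planes gives, via a Sturm--Riccati comparison for $\mathrm{Hess}\,r$ along radial geodesics, the inequality $(\mathrm{Hess}\,r)_p(x,x)\ge w'(r)/w(r)$ for every unit $x\perp(\nabla r)_p$. Summing over an orthonormal frame orthogonal to $\nabla r$ yields $\Delta r\ge (m-1)\,w'(r)/w(r)$, and adding the radial weight derivative $\langle\nabla h,\nabla r\rangle\ge\theta(r)$ produces the claimed lower bound on $\Delta^h r$.

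Next, I would transplant the model isoperimetric quotient. Set $f(r):=\int_0^r\theta(s)\,ds$ and, as in \eqref{eq:function}, define
\[
\phi_R(s):=\int_s^R q_{w,f}(t)\,dt,
\]
which satisfies $\phi_R'(s)=-q_{w,f}(s)\le 0$ and the Poisson identity \eqref{poisson}. Consider the radial function $v:=\phi_R\circ r$, smooth on $B_R-\{o\}$. Using $\phi_R'\le 0$ together with the lower bound on $\Delta^h r$ established above,
\[
\Delta^h v=\phi_R''(r)+\phi_R'(r)\,\Delta^h r\le \phi_R''(r)+\phi_R'(r)\!\left((m-1)\,\tfrac{w'(r)}{w(r)}+\theta(r)\right)=-1
\]
on $B_R-\{o\}$. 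Since $\mathrm{cut}(o)\cap\overline{B}_R=\emptyset$, integrating over $B_R$ and applying the weighted divergence theorem (the radial singularity at $o$ is harmless since $v$ and $\nabla v$ remain bounded near $o$ by the boundary behaviour of $\phi_R$ at $0$) gives
\[
-\Vol_h(B_R)\ge\int_{B_R}\Delta^h v\,dv_h=\int_{\partial B_R}\langle\nabla v,\nabla r\rangle\,da_h=\phi_R'(R)\,\Vol_h(\partial B_R)=-q_{w,f}(R)\,\Vol_h(\partial B_R),
\]
which rearranges to the desired inequality
\[
\frac{\Vol_h(B_R)}{\Vol_h(\partial B_R)}\le q_{w,f}(R)=\frac{\Vol_f(B^w_R)}{\Vol_f(\partial B^w_R)}.
\]

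Finally, to derive the absolute volume comparisons, I would study $F(R):=\Vol_h(B_R)/\Vol_f(B^w_R)$. The isoperimetric inequality just obtained implies $F'(R)\ge 0$ for almost every $R$ in the admissible range, so $F$ is non-decreasing, and the small-ball asymptotics recalled in the proof of Theorem \ref{th:isoper_infty} give $\lim_{R\to 0^+}F(R)=e^{h(o)}$; hence $\Vol_h(B_R)\ge e^{h(o)}\Vol_f(B^w_R)$, and combining with the isoperimetric inequality yields $\Vol_h(\partial B_R)\ge e^{h(o)}\Vol_f(\partial B^w_R)$. The main obstacle I foresee is the first step: justifying the reverse Hessian/Laplacian comparison cleanly in the weighted setting. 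Unlike \eqref{eq:seque}, which comes for free from the Index Lemma, the lower bound needs a Sturm/Rauch-type argument exploiting that $\overline{B}_R$ lies inside the cut locus; once this is in place, the remainder of the proof is an exact mirror of Theorem \ref{th:isoper_infty}.
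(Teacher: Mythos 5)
Your proposal is correct and follows essentially the same route as the paper: the paper likewise invokes the classical reverse Laplacian comparison $\Delta r\geq (m-1)\,w'(r)/w(r)$ under the upper radial sectional curvature bound (citing \cite[Ch.~2]{GreW} and \cite{Pa2} rather than re-deriving it via Sturm--Riccati), adds $\escpr{\nabla h,\nabla r}\geq\theta(r)$ to get $\Delta^h v\leq -1$ for $v=\phi_R\circ r$, and then integrates by the divergence theorem exactly as in Remark~\ref{re:paluego} with reversed inequalities, the hypothesis $\overline{B}_R\cap\text{cut}(o)=\emptyset$ making the approximation argument unnecessary. The derivation of the volume and area bounds from the monotonicity of $\Vol_h(B_R)/\Vol_f(B^w_R)$ and the small-ball asymptotics also matches the paper.
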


\begin{proof}
It is well known, see for instance \cite[Ch.~2]{GreW} and \cite{Pa2}, that the upper bound on the radial sectional curvatures implies that
\begin{equation}
\label{eq:sometimes}
\Delta r\geq (m-1)\,\frac{w'(r)}{w(r)} \ \text{ on } M-(\text{cut}(o)\cup\{o\}).
\end{equation}
Take $R>0$ and let $\phi_{R}:[0,\infty[\to\rr$ be the function in \eqref{eq:function}, which provides the solution to the Poisson problem in \eqref{poisson}. We define $v:=\phi_{R}\circ r$. Since $\phi_{R}'(r)\leq 0$, from \eqref{eq:sometimes} we get
\[
\Delta v\leq\phi''_{R}(r)+\phi'_{R}(r)\,(m-1)\,\frac{w'(r)}{w(r)} \ \text{ on } M-(\text{cut}(o)\cup\{o\}).
\]
This inequality together with the hypothesis in b) yields
$$\Delta^h v \leq -1 \ \text{ on } M-\text{cut}(o).$$
Hence, if $R$ satisfies that $\overline{B}_R\cap\text{cut}(o)=\emptyset$, then $\ptl B_R$ is a smooth hypersurface, and we can proceed as in Remark~\ref{re:paluego} with reversed inequalities to deduce all the comparisons in the claim.
\end{proof} 

\begin{remark}
\label{re:technical}
As a difference with respect to Theorem~\ref{volumeRiem1}, where we assumed a lower bound on the Ricci curvature, the comparisons in Theorem~\ref{volumeRiem2} are only valid for balls having empty intersection with $\text{cut}(o)$. The reason is that the approximation argument in the proof of Theorem~\ref{th:isoper_infty} fails, so that we cannot deduce an integral inequality as in \eqref{eq:weak} from the Laplacian inequality $\Delta^h v\leq -1$ on $M-\text{cut}(o)$. This phenomenon is already observed in some classical comparison results in Riemannian geometry, see for instance \cite[Sect.~III.4]{chavel}.
\end{remark}

Now, we prove a capacity comparison with an associated parabolicity criterion under a lower bound on the radial Ricci curvatures. In the unweighted case $h=0$ we recover a result of Ichihara~\cite{I1}.

\begin{theorem}
\label{parabolicity-h''} 
Let $(M^m,g,e^h)$ be a complete and non-compact weighted manifold, $r:M\rightarrow
[0,\infty[$ the distance function from a point $o\in M$, and $w(s)$ a smooth function such that $w(0)=0$, $w'(0)=1$ and $w(s)>0$ for all $s>0$.  Suppose that the following conditions are fulfilled:
\begin{itemize}
\item[a) ] Every radial Ricci curvature in $(M,g)$ is bounded as
$${\rm Ric} (\nabla r,\nabla r) \geq -(m-1)\,\frac{w''(r)}{w(r)} \ \text{ on }M-(\emph{cut}(o)\cup\{o\}).$$
\item[b) ] There exist $\rho_0>0$ and a continuous function $\theta:[\rho_0,\infty[\to\rr$ such that
$$\langle \nabla h,\nabla r\rangle \leq \theta (r) \ \text{ on }M-(\emph{cut}(o)\cup B_{\rho_0}).$$
\end{itemize}
Then, for almost any $\rho\geq\rho_0$, we have
\begin{equation*}
\frac{\C^h (B_\rho)}{{\rm Vol}_h(\partial B_\rho)}\leq\frac{\C^f(B^w_\rho)}{\Vol_f(\ptl B^w_\rho)},
\end{equation*}
where $\C^{f} (B^w_\rho)$ denotes the weighted capacity of the metric ball $B^w_\rho$ in a weighted $(w,f)$-model space $(M^m_w,g_w,e^{f(r)})$ with $f(r):=\int_{\rho_0}^r \theta (s)\,ds$ for any $r\geq\rho_0$. 

Moreover, if $\rho_0=0$, then
\begin{equation*}
\label{Capcomp1}
\C^h(B_\rho) \leq e^{h(o)}\,\C^{f} (B^w_\rho),
\end{equation*}
for almost any $\rho>0$.

Anyway, if
\begin{equation}
\label{cond-para-h''}
\int_{\rho_0}^\infty w^{1-m}(s)\, e^{-f(s)}\,ds=\infty,
\end{equation}
then $M$ is $h$-parabolic.
\end{theorem}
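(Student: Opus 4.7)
The plan is to adapt the strategy of Theorem~\ref{comp:inftycapacity}, replacing its $\infty$-Bakry-\'Emery Ricci bound by the Riemannian Ricci bound assumed here and incorporating the control on $\langle\nabla h,\nabla r\rangle$ as an additive correction. Because $\theta$ is available only on $[\rho_0,\infty[$ and is not required to be monotone or $C^1$, the resulting Laplacian inequality for $r$ holds only on $M-(\text{cut}(o)\cup B_{\rho_0})$, which is precisely why the capacity comparison is restricted to $\rho\geq\rho_0$.

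By the classical Laplacian comparison theorem under a Ricci lower bound (see \cite[Ch.~2]{GreW}, or equivalently \eqref{lap2} with $h=0$), one has $\Delta r\leq (m-1)w'(r)/w(r)$ on $M-(\text{cut}(o)\cup\{o\})$; adding hypothesis b) produces
\[
\Delta^h r\leq (m-1)\,\frac{w'(r)}{w(r)}+\theta(r)\quad\text{on } M-(\text{cut}(o)\cup B_{\rho_0}).
\]
Fix radii $\rho_0\leq\rho<R$, set $f(r):=\int_{\rho_0}^r\theta(s)\,ds$, and let $v:=\phi_{\rho,R,f}\circ r$ with $\phi_{\rho,R,f}$ the $f$-capacity potential of Theorem~\ref{capacityweighted}. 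Since $\phi_{\rho,R,f}'\leq 0$ and $\phi_{\rho,R,f}$ satisfies \eqref{eq:sabika}, the identity $\Delta^h v=\phi''(r)+\phi'(r)\,\Delta^h r$ combined with the preceding bound yields $\Delta^h v\geq 0$ on $A_{\rho,R}-\text{cut}(o)$, where $A_{\rho,R}:=B_R-\overline{B}_\rho$.

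I would then reproduce verbatim the approximation argument of Theorem~\ref{comp:inftycapacity}: exhaust $M-\text{cut}(o)$ by smooth precompact open sets with nonnegative radial conormal, apply the divergence theorem to $\divv^h(\var\,\nabla v)$ with $\var=\psi\,v$ for $\psi$ a nonnegative radial cut-off supported in $A_{\rho,R}$, and let the cut-off converge to the characteristic function of $A_{\rho,R}$. This gives, for a.e. $\rho\in[\rho_0,R)$,
\[
\int_{A_{\rho,R}}|\nabla v|^2\,dv_h\leq |\phi'_{\rho,R,f}(\rho)|\,\Vol_h(\ptl B_\rho),
\]
and the admissibility of the extension $\overline{v}$ (equal to $v$ on $A_{\rho,R}$ and to $1$ on $B_\rho$) for the capacitor $(B_\rho,B_R)$, together with \eqref{fcapacitymodel}, yields
\[
\frac{\C^h(B_\rho,B_R)}{\Vol_h(\ptl B_\rho)}\leq\frac{\C^f(B^w_\rho,B^w_R)}{\Vol_f(\ptl B^w_\rho)}.
\]
Passing to the limit $R\to\infty$ via \eqref{eq:sensible} produces the main inequality.

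When $\rho_0=0$ the hypotheses of Theorem~\ref{volumeRiem1} are met, and its estimate \eqref{ineq:volume2} on $\Vol_h(\ptl B_\rho)$ feeds into the main inequality to give the refined bound $\C^h(B_\rho)\leq e^{h(o)}\C^f(B^w_\rho)$. For the parabolicity criterion, the divergence of the integral in \eqref{cond-para-h''} forces $\C^f(B^w_\rho)=0$ by \eqref{fcapacitymodel}, so the main comparison gives $\C^h(B_\rho)=0$ at some $\rho\geq\rho_0$, and Theorem~\ref{theorGrig} concludes the $h$-parabolicity of $M$. The chief technical obstacle is the approximation argument near $\text{cut}(o)$, which requires constructing an exhaustion of $M-\text{cut}(o)$ whose boundary conormals cooperate with $\nabla r$ so that a boundary integral arising from the divergence theorem has favorable sign; however this is exactly the construction already used in Theorem~\ref{comp:inftycapacity} and \cite[Lem.~2.5]{prs-book}, so it transfers to the present setting without modification.
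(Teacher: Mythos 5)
Your proposal follows essentially the same route as the paper's proof: derive $\Delta^h v\geq 0$ on $M-(\text{cut}(o)\cup B_{\rho_0})$ for $v=\phi_{\rho,R,f}\circ r$ by combining the classical unweighted Laplacian comparison under the Ricci lower bound with hypothesis b), run the approximation/cut-off argument of Theorem~\ref{comp:inftycapacity} with test functions supported in $M-\overline{B}_{\rho_0}$, pass to the limit $R\to\infty$, invoke the area inequality of Theorem~\ref{volumeRiem1} when $\rho_0=0$, and conclude parabolicity via \eqref{fcapacitymodel} and Theorem~\ref{theorGrig}. This matches the paper's argument in both structure and detail, so the proposal is correct.
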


\begin{proof} 
We follow the scheme of proof and notation in Theorem~\ref{comp:inftycapacity}. For any numbers $\rho,R$ with $\rho_0\leq\rho<R$, consider the radial function $v:=\phi_{\rho,R,f}\circ r$. Since $\phi_{\rho,R,f}'\leq 0$, by taking into account the estimate for $\Delta v$ in \eqref{lapF2} and the inequality $\langle\nabla h,\nabla r\rangle\leq\theta(r)$ on $M-(\text{cut}(o)\cup B_{\rho_0})$, we obtain
\begin{align*}
\Delta^h v&=\Delta v+\escpr{\nabla h,\nabla v}\geq\phi_{\rho,R,f}''(r)+\phi_{\rho,R,f}'(r)\left((m-1)\,\frac{w'(r)}{w(r)}+\escpr{\nabla h,\nabla r}\right)
\\
&\geq\phi_{\rho,R,f}''(r)+\phi_{\rho,R,f}'(r)\left((m-1)\,\frac{w'(r)}{w(r)}+\theta(r)\right)=0 \ \text { on } M-(\text{cut}(o)\cup B_{\rho_{0}}).
\end{align*}
By reasoning as in the proof of Theorem~\ref{th:isoper_infty}, we get
\[
\int_M\escpr{\nabla v,\nabla\var}\,dv_h\leq 0, \ \text{ for any }\var\in H^1_0(M-\overline{B}_{\rho_0},dv_h) \text{ with } \var\geq 0.
\]
In particular, the inequality in \eqref{eq:weak2} is valid for any $\psi\in H^1_0(M-\overline{B}_{\rho_0},dv_h)$ with $\psi\geq 0$ and $\psi=0$ in $A_{\rho,R}:=B_R-\overline{B}_\rho$. From this point we can finish the proof as in Theorem~\ref{comp:inftycapacity} with the help of the last inequality in the statement of Theorem~\ref{volumeRiem1}.
\end{proof}

If we replace the Ricci curvature with the sectional curvature, and we reverse the hypotheses in Theorem~\ref{parabolicity-h''}, then we deduce a weighted hyperbolicity criterion for complete manifolds with a pole which generalizes a result of Ichihara \cite{I1} for simply connected Riemannian manifolds. Indeed, under the assumptions on the sectional curvature, the fact that $M$ is simply connected implies that it has a pole. We also remark that the technical issue observed in Remark~\ref{re:technical} prevents a direct extension of the theorem to any complete weighted manifold.

\begin{theorem}
\label{hyperbolicity-h''}
Let $(M^m,g,e^h)$ be a complete weighted manifold, $r:M\rightarrow[0,\infty[$ the distance function from a pole $o\in M$, and $w(s)$ a smooth function such that $w(0)=0$, $w'(0)=1$ and $w(s)>0$ for all $s>0$.  Suppose that the following conditions are fulfilled:
\begin{itemize}
\item[a) ] For any $p\in M-\{o\}$ and any plane $\sigma_p\subeq T_pM$ containing $(\nabla r)_p$, we have
$${\rm Sec}(\sigma_p)\leq-\frac{w''(r)}{w(r)}.$$
\item[b) ] There exist $\rho_0>0$ and a continuous function $\theta:[\rho_0,\infty[\to\rr$ such that
$$\langle \nabla h,\nabla r\rangle \geq \theta (r) \ \text{ on }M-B_{\rho_0}.$$
\end{itemize}
Then, for any $\rho\geq\rho_0$, we get
\begin{equation}
\label{quasineq-hyperbolicity-h''}
\frac{\C^h (B_\rho)}{{\rm Vol}_h(\partial B_\rho)}\geq\frac{\C^f(B^w_\rho)}{\Vol_f(\ptl B^w_\rho)},
\end{equation}
where $\C^{f} (B^w_\rho)$ denotes the weighted capacity of the metric ball $B^w_\rho$ in a weighted $(w,f)$-model space $(M^m_w,g_w,e^{f(r)})$ with $f(r):=\int_{\rho_0}^r \theta (s)\,ds$ for any $r\geq\rho_0$.

Moreover, if $\rho_0=0$, then
\begin{equation}
\label{Capcomp2}
\C^h(B_\rho)\geq e^{h(o)}\,\C^{f} (B^w_\rho),
\end{equation}
for any $\rho>0$.

Anyway, if
\begin{equation}
\label{cond-hyper-h''}
\int_{\rho_0}^\infty w^{1-m}(s)\, e^{-f(s)}\,ds<\infty,
\end{equation}
then $M$ is $h$-hyperbolic.
\end{theorem}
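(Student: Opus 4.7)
Since $o$ is a pole of $M$, the cut locus $\text{cut}(o)$ is empty, so the distance function $r$ is smooth on $M-\{o\}$ and the argument proceeds directly via the divergence theorem and the maximum principle, as in the final remark after Theorem~\ref{comp:inftycapacity}, but with all inequalities reversed. The plan is to transplant the $f$-capacity potential from the model space to $M$, show that the transplanted function is $h$-\emph{super}harmonic in the annulus $A_{\rho,R}:=B_R-\overline{B}_\rho$, and obtain a \emph{lower} bound for $\C^h(B_\rho,B_R)$ by comparing boundary derivatives with the genuine $h$-capacity potential of $(B_\rho,B_R)$.

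Fix $\rho_0\leq\rho<R$ and set $v:=\phi_{\rho,R,f}\circ r$ on $A_{\rho,R}$, where $\phi_{\rho,R,f}$ is the radial potential from \eqref{fsolmodel}. The upper bound on the radial sectional curvature in (a) yields, via \eqref{eq:sometimes}, that $\Delta r\geq (m-1)\,w'(r)/w(r)$ on $M-\{o\}$. Combining this with $\langle\nabla h,\nabla r\rangle\geq\theta(r)$ on $A_{\rho,R}\subset M-B_{\rho_0}$ from (b), the sign $\phi'_{\rho,R,f}\leq 0$, and the ODE \eqref{eq:sabika} solved by $\phi_{\rho,R,f}$, one obtains
\begin{equation*}
\Delta^h v=\phi''_{\rho,R,f}(r)+\phi'_{\rho,R,f}(r)\bigl(\Delta r+\langle\nabla h,\nabla r\rangle\bigr)\leq\phi''_{\rho,R,f}(r)+\phi'_{\rho,R,f}(r)\left((m-1)\,\frac{w'(r)}{w(r)}+\theta(r)\right)=0
\end{equation*}
on $A_{\rho,R}$. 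Thus $v$ is a smooth $h$-superharmonic function in $A_{\rho,R}$ with $v=1$ on $\partial B_\rho$ and $v=0$ on $\partial B_R$.

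Let $u$ denote the $h$-capacity potential of $(B_\rho,B_R)$. Then $u-v$ is $h$-subharmonic with vanishing boundary values, so Theorem~\ref{th:mp} forces $u\leq v$ in $A_{\rho,R}$. Since $u=v$ on $\partial B_\rho$, the elementary one-sided derivative comparison gives $\partial u/\partial\nu\geq\partial v/\partial\nu=|\phi'_{\rho,R,f}(\rho)|$ pointwise there, where $\nu$ is the outer unit normal of $A_{\rho,R}$ along $\partial B_\rho$ (pointing \emph{into} $B_\rho$, consistently with \eqref{eq:capint}). Integrating, and invoking \eqref{eq:capint} together with the model computation \eqref{fcapacitymodel},
\begin{equation*}
\C^h(B_\rho,B_R)=\int_{\partial B_\rho}\frac{\partial u}{\partial\nu}\,da_h\geq|\phi'_{\rho,R,f}(\rho)|\,\Vol_h(\partial B_\rho)=\frac{\C^f(B^w_\rho,B^w_R)}{\Vol_f(\partial B^w_\rho)}\,\Vol_h(\partial B_\rho).
\end{equation*}
Letting $R\to\infty$ via \eqref{eq:sensible} proves \eqref{quasineq-hyperbolicity-h''}. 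When $\rho_0=0$, combining with the sphere-area estimate $\Vol_h(\partial B_\rho)\geq e^{h(o)}\,\Vol_f(\partial B^w_\rho)$ from Theorem~\ref{volumeRiem2} (which applies for every $\rho>0$ since $\text{cut}(o)=\emptyset$) yields \eqref{Capcomp2}. Finally, if \eqref{cond-hyper-h''} holds then \eqref{fcapacitymodel} gives $\C^f(B^w_{\rho_0})>0$, hence $\C^h(B_{\rho_0})>0$ by the comparison just proved, and Theorem~\ref{theorGrig} forces $M$ to be $h$-hyperbolic. The most delicate point in the argument is the sign bookkeeping in the boundary derivative comparison, since the normal $\nu$ in \eqref{eq:capint} points into $B_\rho$ rather than out of it; once this orientation is tracked correctly the proof is a clean mirror image of the parabolicity argument in Theorem~\ref{parabolicity-h''}.
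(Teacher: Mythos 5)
Your argument is correct, and it coincides with the paper's proof in all the substantive steps: transplanting the radial potential $\phi_{\rho,R,f}$ to the annulus $A_{\rho,R}$, using \eqref{eq:sometimes} together with hypothesis b) and $\phi_{\rho,R,f}'\leq 0$ to get $\Delta^h v\leq 0$, passing to the limit $R\to\infty$ via \eqref{eq:sensible}, invoking the area comparison of Theorem~\ref{volumeRiem2} for \eqref{Capcomp2}, and concluding hyperbolicity from Theorem~\ref{theorGrig}. The only place where you diverge is the extraction of the lower bound for $\C^h(B_\rho,B_R)$: the paper applies the divergence theorem to the two vector fields $v\,\nabla u$ and $u\,\nabla v$ on $A_{\rho,R}$ and compares the resulting expressions for $\int_{A_{\rho,R}}\escpr{\nabla u,\nabla v}\,dv_h$, discarding the term $\int u\,\Delta^hv\,dv_h\leq 0$; you instead deduce $u\leq v$ from the maximum principle applied to the $h$-subharmonic function $u-v$ and then compare one-sided normal derivatives on $\ptl B_\rho$, where your sign bookkeeping (with $\nu$ pointing into $B_\rho$ as in \eqref{eq:capint}) is accurate. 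The two mechanisms are equivalent in strength here; your variant is in fact the one the paper itself uses in the extrinsic analogue, Theorem~\ref{parabolicity-sub-h''}, while the paper's integral version avoids any appeal to boundary regularity of $u$ beyond what the Green identities require. Either way the proof goes through, so there is nothing to fix.
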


\begin{proof} 
We follow the notation in Theorem~\ref{comp:inftycapacity}. Choose numbers $\rho,R$ with $\rho_0\leq\rho<R$. Consider the radial function $v:=\phi_{\rho,R,f}\circ r$ defined in $A_{\rho,R}:=B_R-\overline{B}_\rho$. 
Since $\phi_{\rho,R,f}'\leq 0$, by taking into account the estimate for $\Delta r$ in \eqref{eq:sometimes}, the equality \eqref{eq:bullin} and the hypothesis $\langle\nabla h,\nabla r\rangle\geq\theta(r)$ on $M-B_{\rho_0}$, we obtain
\begin{equation*}
\Delta^h v=\Delta v+\escpr{\nabla h,\nabla v}\leq\phi_{\rho,R,f}''(r)+\phi_{\rho,R,f}'(r)\left((m-1)\,\frac{w'(r)}{w(r)}+\theta(r)\right)=0.
\end{equation*}

On the other hand, the $h$-capacity potential $u$ of the capacitor $(\overline{B}_\rho,B_R)$ in $M$ satisfies $\Delta^hu=0$ and $0\leq u\leq 1$ in $A_{\rho,R}$, whereas $u=v=1$ along $\ptl B_\rho$ and $u=v=0$ along $\ptl B_R$. From equation \eqref{eq:capint}, and applying the divergence theorem in $A_{\rho,R}$ to the vector fields $v\,\nabla u$ and $u\,\nabla v$, we get
\begin{align*}
\C^h(B_\rho,B_R)&=-\int_{\ptl B_\rho}\escpr{\nabla u,\nabla r}\,da_h=\int_{A_{\rho,R}}\escpr{\nabla v,\nabla u}\,dv_h
\\
&\geq\int_{A_{\rho,R}}u\,\Delta^h v\,dv_h+\int_{A_{\rho,R}}\escpr{\nabla u,\nabla v}\,dv_h
\\
&=-\int_{\ptl B_\rho}\escpr{\nabla v,\nabla r}\,da_h= |\phi_{\rho,R,f}'(\rho)|\,\Vol_h(\ptl B_\rho)
\\
&=\frac{\text{Cap}^f(B^w_\rho,B^w_R)}{\Vol_f(\ptl B^w_\rho)}\,\Vol_h(\ptl B_\rho),
\end{align*}
where we have used \eqref{fcapacitymodel}. Thanks to \eqref{eq:sensible} the inequality \eqref{quasineq-hyperbolicity-h''} follows by taking limits above when $R$ goes to infinity. The estimate in \eqref{Capcomp2} comes from \eqref{quasineq-hyperbolicity-h''} with the help of the last comparison in Theorem~\ref{volumeRiem2}. Finally, the hypothesis \eqref{cond-hyper-h''} implies by equation \eqref{fcapacitymodel} that $\C^f(B^w_{\rho_0})>0$. Thus, we have $\C^h(B_{\rho_0})>0$ by \eqref{quasineq-hyperbolicity-h''}. From Theorem~\ref{theorGrig} we conclude that $M$ is $h$-hyperbolic.
\end{proof}

\begin{remark} 
\label{re:papa}
The hypothesis about $\text{Ric}$ in Theorem~\ref{parabolicity-h''} is independent of the hypothesis about $\text{Ric}^h_\infty$ in Theorem~\ref{comp:inftycapacity}. However, it is immediate that the second one implies the first one provided the convexity condition $(\text{Hess}\,h)(\nabla r,\nabla r)\geq 0$ holds on $M-(\text{cut}(o)\cup\{o\})$. In a similar way, the hypothesis
\[
\text{Sec}^h_\infty(\sigma_p)\leq-\frac{w''(r)}{w(r)}
\]
for any $p\in M-(\text{cut}(o)\cup\{o\})$ and any radial plane $\sigma_p\subseteq T_pM$ entails the same upper bound on $\text{Sec}(\sg_p)$ whenever $(\text{Hess}\,h)(\nabla r,\nabla r)\leq 0$ on $M-(\text{cut}(o)\cup\{o\})$.
\end{remark}

\subsection{Comparisons under a lower bound on the $q$-Bakry-\'Emery 
Ricci curvatures} 
\label{finiteBakry}\

In this subsection we first follow the arguments of the previous ones to provide a comparison for the weighted isoperimetric quotient and a parabolicity criterion by assuming a lower bound on some $q$-Bakry-\'Emery Ricci curvature with $q>0$. Related comparisons for volumes and quotient of volumes (but not for isoperimetric quotients) are found in \cite{qian,MR2243959,MRS}.

\begin{theorem}
\label{th:isoper} 
Let $(M^m,g,e^h)$ be a complete and non-compact weighted manifold, $r:M\rightarrow [0,\infty[$ the distance function from a point $o\in M$, and $w(s)$ a smooth function such that $w(0)=0$, $w'(0)=1$ and $w(s)>0$ for all $s>0$. 

If, for some $q>0$, the radial $q$-Bakry-\'Emery Ricci curvature is bounded as
\[
{\rm Ric}^h_q(\nabla r,\nabla r)\geq-(m+q-1)\,\frac{w''(r)}{w(r)} \ \text{ on } M-(\emph{cut}(o)\cup\{o\}),
\]
then, for almost any $R>0$, we have
\[
\frac{\Vol_h(B_R)}{\Vol_h (\partial B_R)}\geq\frac{\int_0^R w^{m+q-1}(s)\,ds}{w^{m+q-1}(R)}.
\]
\end{theorem}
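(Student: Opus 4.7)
The plan is to mimic the proof of Theorem~\ref{th:isoper_infty}, but using the Laplacian comparison of Theorem~\ref{th:laplaceineqr}~a) in place of b), with an ``effective dimension'' model in which the radial volume density is $w^{m+q-1}(t)$ rather than $w^{m-1}(t)e^{f(t)}$. Concretely, I would introduce
\[
q_{w,q}(t):=\frac{\int_0^t w^{m+q-1}(s)\,ds}{w^{m+q-1}(t)}, \qquad
\phi_R(s):=\int_s^R q_{w,q}(t)\,dt,
\]
for any fixed $R>0$. Observe that $\phi_R$ is $C^2$ on $[0,\infty[$, with $\phi_R(R)=0$, $\phi_R'=-q_{w,q}\leq 0$, and a direct computation (using $(w^{m+q-1})'/w^{m+q-1}=(m+q-1)\,w'/w$) gives the Poisson-type identity
\[
\phi_R''(s)+\phi_R'(s)\,(m+q-1)\,\frac{w'(s)}{w(s)}=-1, \qquad s\in[0,\infty[.
\]

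Next I would define the radial function $v:=\phi_R\circ r$ on $M$. Since $\phi_R'\leq 0$, the hypothesis on $\text{Ric}^h_q$ and inequality \eqref{lapF1} of Theorem~\ref{th:laplaceineqr} applied with $F=\phi_R$, combined with the identity above, yield the pointwise bound
\[
\Delta^h v\geq \phi_R''(r)+\phi_R'(r)\,(m+q-1)\,\frac{w'(r)}{w(r)}=-1
\]
on $M-\text{cut}(o)$. From here, I would run verbatim the approximation argument in the proof of Theorem~\ref{th:isoper_infty}: exhaust $M-\text{cut}(o)$ by smooth precompact open sets $\{M_n\}$ for which $\langle\nabla r,\nu_n\rangle>0$ along $\partial M_n$, apply the weighted divergence theorem on each $M_n$, and let $n\to\infty$, using that $\text{cut}(o)$ has zero $h$-weighted volume. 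This upgrades the pointwise Laplacian inequality to the weak inequality
\[
\int_M\escpr{\nabla v,\nabla\varphi}\,dv_h\leq\int_M\varphi\,dv_h, \quad \text{for every non-negative } \varphi\in H^1_0(M,dv_h).
\]

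Finally, I would test this inequality against the piecewise-linear hat functions $\varphi_\eps:=\rho_\eps\circ r$ supported in $\overline{B}_R$ used in the proof of Theorem~\ref{th:isoper_infty}, perform the elementary computations on both sides and let $\eps\to 0^+$. Invoking the coarea formula and the fact that $\frac{d}{dR}\Vol_h(B_R)=\Vol_h(\partial B_R)$ for almost every $R>0$, I obtain
\[
\Vol_h(B_R)\geq q_{w,q}(R)\,\Vol_h(\partial B_R)=\frac{\int_0^R w^{m+q-1}(s)\,ds}{w^{m+q-1}(R)}\,\Vol_h(\partial B_R),
\]
which is the desired inequality. The main obstacle, as in Theorem~\ref{th:isoper_infty}, is purely technical: passing from the pointwise bound $\Delta^hv\geq -1$ on $M-\text{cut}(o)$ to the integral inequality above requires the approximation by sets on which $r$ is smooth and $\langle\nabla r,\nu_n\rangle>0$; once that scheme is accepted, the rest reduces to the identity satisfied by $\phi_R$ and the Laplacian comparison for $\text{Ric}^h_q$.
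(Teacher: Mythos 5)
Your proposal is correct and follows essentially the same route as the paper: the authors also introduce the quotient $q_w(t)=\int_0^t w^{m+q-1}(s)\,ds\,/\,w^{m+q-1}(t)$, set $\phi_R(s)=\int_s^R q_w(t)\,dt$, use the Poisson-type identity together with inequality \eqref{lapF1} of Theorem~\ref{th:laplaceineqr} to get $\Delta^h v\geq -1$ on $M-\text{cut}(o)$ for $v=\phi_R\circ r$, and then invoke the approximation and test-function argument from the proof of \eqref{ineq:quotient} in Theorem~\ref{th:isoper_infty}. No gaps.
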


\begin{proof}
We will employ arguments similar to those in \cite{Pa,MaP1,MP5}. Let $q_w:(0,\infty)\to\rr$ be the function
\[
q_w(t):=\frac{\int_0^tw^{m+q-1}(s)\,ds}{w^{m+q-1}(t)}.
\]
Note that $q_{w}$ extends to $0$ as a $C^1$ function with $q_{w}(0)=0$ and $q'_{w}(0)=\frac{1}{m+q}$. 

For fixed $R>0$, we define the $C^2$ function $\phi_R:[0,\infty[\to\rr$ by $\phi_R(s):=\int_s^R q_w(t)\,dt$. This satisfies the Poisson-type problem
\begin{equation*}
\begin{cases}
\phi_R''(s)+\phi_R'(s)\,(m+q-1)\,\frac{w'(s)}{w(s)}=-1 \ \text{ in } [0,\infty[,
\\
\phi_R(R)=0.
\end{cases}
\end{equation*}
If we consider $v:=\phi_{R}\circ r$, then we get a radial function $v\in C^2(M-\text{cut}(o))$. Since $\phi_R'(s)=-q_w(s)\leq 0$, by inequality (\ref{lapF1}) in Theorem \ref{th:laplaceineqr} we deduce
\begin{displaymath}
\Delta^h v\geq \phi_R''(r)+\phi_R'(r)\,(m+q-1)\,\frac{w'(r)}{w(r)}=-1 \ \text{ on } M-\text{cut}(o).
\end{displaymath}
Now, we can follow the proof of \eqref{ineq:quotient} to deduce the desired comparison.
\end{proof}

\begin{theorem}
\label{comp:qcapacity}
 Let $(M^m,g,e^h)$ be a complete and non-compact weighted manifold, $r:M\rightarrow[0,\infty[$ the distance function from a point $o\in M$, and $w(s)$ a smooth function such that $w(0)=0$, $w'(0)=1$ and $w(s)>0$ for all $s>0$.

If, for some $q>0$, the radial $q$-Bakry-\'Emery Ricci curvature is bounded as
\[
{\rm Ric}^h_q(\nabla r,\nabla r)\geq-(m+q-1)\,\frac{w''(r)}{w(r)} \ \text{ on } M-(\emph{cut}(o)\cup\{o\}),
\]
then, for almost any $\rho>0$, we have 
\begin{displaymath}
\frac{\C^h (B_\rho)}{\Vol_h(\ptl B_\rho)}\leq\frac{w^{1-m-q}(\rho)}{\int_\rho^\infty w^{1-m-q}(s)\,ds}.
\end{displaymath}

Moreover, if
\begin{equation}
\label{cond_para}
\int_\rho^\infty w^{1-m-q}(s)\,ds=\infty,
\end{equation}
for some $\rho>0$, then $M$ is $h$-parabolic.
\end{theorem}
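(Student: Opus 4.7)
The plan is to follow exactly the same strategy as the proof of Theorem~\ref{comp:inftycapacity}, but now using inequality~\eqref{lapF1} (tailored to the $q$-Bakry-\'Emery hypothesis) in place of~\eqref{lapF2}. Under the present curvature assumption, the upper bound~\eqref{lap1} for $\Delta^h r$ matches the Laplacian of the distance from the center point in the \emph{unweighted} $(m+q)$-dimensional model $(M^{m+q}_w, g_w)$. Consequently, the natural comparison capacitor is the annulus $A^w_{\rho,R}$ in this higher-dimensional model with trivial weight $f\equiv 0$.

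First, I would specialize Theorem~\ref{capacityweighted} with dimension $m+q$ and $f\equiv 0$ to identify the explicit capacity potential
\[
\phi_{\rho,R}(r):=\left(\int_r^R w^{1-m-q}(s)\,ds\right)\left(\int_\rho^R w^{1-m-q}(s)\,ds\right)^{-1},
\]
which satisfies the radial ODE in~\eqref{eq:sabika} with $m$ replaced by $m+q$ and $f'\equiv 0$, the boundary values $\phi_{\rho,R}(\rho)=1$, $\phi_{\rho,R}(R)=0$, and the monotonicity $\phi'_{\rho,R}\leq 0$. Setting $v:=\phi_{\rho,R}\circ r$, the inequality~\eqref{lapF1} together with $\phi'_{\rho,R}\leq 0$ immediately gives $\Delta^h v\geq 0$ on $M-(\text{cut}(o)\cup\{o\})$.

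Next, I would reproduce the approximation argument from the proof of Theorem~\ref{th:isoper_infty}: choose an exhaustion $\{M_n\}$ of $M-\text{cut}(o)$ by smooth precompact sets whose outer conormals $\nu_n$ satisfy $\escpr{\nabla r,\nu_n}>0$, multiply $\Delta^h v\geq 0$ by a nonnegative test function, integrate by parts, and pass to the limit to obtain the weak inequality
\[
\int_M \escpr{\nabla v,\nabla\var}\,dv_h\leq 0,\qquad \var\in H^1_0(M,dv_h),\ \var\geq 0.
\]
Plugging $\var:=\psi\,v$ with $\psi$ a radial cutoff vanishing off $A_{\rho,R}$, and using the coarea formula together with the boundary values $\phi_{\rho,R}(\rho)=1$, $\phi_{\rho,R}(R)=0$ exactly as in Theorem~\ref{comp:inftycapacity}, I would conclude
\[
\C^h(B_\rho,B_R)\leq\int_{A_{\rho,R}}|\nabla v|^2\,dv_h\leq|\phi'_{\rho,R}(\rho)|\,\Vol_h(\ptl B_\rho)=\frac{w^{1-m-q}(\rho)}{\int_\rho^R w^{1-m-q}(s)\,ds}\,\Vol_h(\ptl B_\rho)
\]
for almost every $\rho\in(0,R)$. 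Taking $R\to\infty$ and invoking~\eqref{eq:sensible} yields the stated bound on $\C^h(B_\rho)/\Vol_h(\ptl B_\rho)$. Finally, if the integral in~\eqref{cond_para} diverges, the right-hand side vanishes for almost every such $\rho$, so $\C^h(B_\rho)=0$ for some $\rho>0$, and Theorem~\ref{theorGrig} gives the $h$-parabolicity of $M$.

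The main obstacle is purely technical bookkeeping arising from the cut locus: because $r$ is merely continuous on $\text{cut}(o)$, one cannot apply the divergence theorem directly on the annulus, and the approximation scheme through the exhaustion $\{M_n\}$ is essential to pass from the pointwise inequality $\Delta^h v\geq 0$ to its weak form. This delicate step was already resolved in Theorems~\ref{th:isoper_infty} and~\ref{comp:inftycapacity}; the only genuine novelty here is that the transplanted potential $v$ uses the $(m+q)$-dimensional unweighted model rather than the $(w,f)$-model, an adjustment that enters the calculation only through the exponent $m+q-1$ in~\eqref{lap1}.
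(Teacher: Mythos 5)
Your proposal is correct and follows exactly the route of the paper's own proof: transplant the capacity potential $\phi_{\rho,R}$ of \eqref{fsolmodel} with $f=0$ and dimensional constant $m+q$, use \eqref{lapF1} with $\phi_{\rho,R}'\leq 0$ to get $\Delta^h v\geq 0$, and then run the weak-form approximation argument of Theorems~\ref{th:isoper_infty} and~\ref{comp:inftycapacity} before letting $R\to\infty$. No gaps; your write-up is in fact more detailed than the paper's, which simply cites the earlier arguments.
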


\begin{proof}
In this case we consider $v:=\phi_{\rho,R}\circ r$, where $\phi_{\rho,R}:[0,\infty[\to\rr$ is the function defined in \eqref{fsolmodel} when we take $f=0$ and dimensional constant $m+q$. In particular $\phi_{\rho,R}$ satisfies the differential equation in \eqref{eq:sabika}, so that the comparison $\Delta^h v\geq 0$ on $M-\text{cut}(o)$ comes from inequality \eqref{lapF1} in Theorem \ref{th:laplaceineqr}. The rest of the proof relies on the arguments employed in Theorem~\ref{comp:inftycapacity}.
\end{proof}

We finish this section by showing how we can deduce the parabolicity of a Riemannian manifold $(M^m,g)$ from a lower bound for the $q$-Bakry-\'Emery Ricci curvatures associated to a weight $e^h$ in $M$. The strategy for the proof is similar to previous ones by using a second order differential operator $L$, that coincides with the weighted Laplacian in some $(w,f)$-model space only when $q\in\mathbb{N}$.

\begin{theorem}
\label{comp:capacity} 
Let $(M^m,g)$ be a complete and non-compact Riemannian manifold, $r:M\rightarrow[0,\infty[$ the distance function from a point $o\in M$, and $w(s)$ a smooth function such that $w(0)=0$, $w'(0)=1$ and $w(s)>0$ for all $s>0$.  Suppose that there exists a weight $e^h$ on $M$ such that the following conditions are fulfilled:
\begin{itemize}
\item [a) ] For some $q>0$ the radial $q$-Bakry-\'Emery Ricci
curvature is bounded as
\[
{\rm Ric}_q^h(\nabla r,\nabla r) \geq -(m+q-1)\,\frac{w''(r)}{w(r)} \ \text{ on }M-(\emph{cut}(o)\cup\{o\}).
\]
\item [b) ] There exist $\rho_0>0$ and a continuous function $\theta:[\rho_0,\infty[\to\rr$ such that  
$$\langle\nabla h,\nabla r\rangle\geq\theta(r) \ \text{ on }M-B_{\rho_0}.$$ 
\end{itemize}
Then, for almost any $\rho\geq\rho_0$, we have  
\begin{equation}
\label{memento}
\frac{\C(B_\rho)}{{\rm Vol}(\partial B_\rho)}\leq\frac{w^{1-m-q}(\rho)\,e^{-f(\rho)}}{\int_\rho^\infty w^{1-m-q}(s)\,e^{-f(s)}\,ds},
\end{equation}
where $\C$ and $\Vol$ denote the capacity and volume in $(M,g)$, and $f(r):=-\int_{\rho_0}^r \theta (s)\,ds$ for any $r\geq\rho_0$.

Moreover, if
\begin{equation}
\label{cond_para_Riem}
\int_{\rho_0}^\infty w^{1-m-q} (s)\, e^{-f(s)}\,ds=\infty,
\end{equation}
then $M$ is parabolic in Riemannian sense.
\end{theorem}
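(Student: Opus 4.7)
The strategy mirrors the one used in Theorems~\ref{comp:inftycapacity} and \ref{parabolicity-h''}, except that now the relevant capacity is the unweighted Riemannian capacity, while the test function is built from a radial profile associated with an auxiliary weight constructed from $\theta$. The key observation is that hypotheses (a) and (b) combine to bound the ordinary Laplacian $\Delta r$ from above even though no sectional curvature hypothesis is available.

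First, from hypothesis (a) and part (a) of Theorem~\ref{th:laplaceineqr}, we have
\[
\Delta^h r\leq (m+q-1)\,\frac{w'(r)}{w(r)}\ \text{ on }M-(\mathrm{cut}(o)\cup\{o\}).
\]
Using $\Delta^h r=\Delta r+\escpr{\nabla h,\nabla r}$ together with hypothesis (b) and $f'(r)=-\theta(r)$ for $r\geq\rho_0$, we then obtain
\[
\Delta r\leq (m+q-1)\,\frac{w'(r)}{w(r)}+f'(r)\ \text{ on }M-(\mathrm{cut}(o)\cup\overline{B}_{\rho_0}).
\]
Fix $\rho,R$ with $\rho_0\leq\rho<R$ and consider the radial profile
\[
\phi_{\rho,R}(r):=\frac{\int_r^R w^{1-m-q}(s)\,e^{-f(s)}\,ds}{\int_\rho^R w^{1-m-q}(s)\,e^{-f(s)}\,ds},
\]
which satisfies $\phi_{\rho,R}(\rho)=1$, $\phi_{\rho,R}(R)=0$, $\phi'_{\rho,R}\leq 0$, and the ODE
\[
\phi''(r)+\phi'(r)\Bigl((m+q-1)\,\tfrac{w'(r)}{w(r)}+f'(r)\Bigr)=0.
\]

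Next, define $v:=\phi_{\rho,R}\circ r$ on the annulus $A_{\rho,R}:=B_R-\overline{B}_\rho$. Because $\phi'_{\rho,R}\leq 0$, the Laplacian bound on $\Delta r$ yields
\[
\Delta v=\phi''_{\rho,R}(r)+\phi'_{\rho,R}(r)\,\Delta r\geq 0\ \text{ on }A_{\rho,R}-\mathrm{cut}(o).
\]
Exactly as in the proofs of Theorems~\ref{comp:inftycapacity} and \ref{parabolicity-h''}, I then run an exhaustion/approximation argument on $M-\mathrm{cut}(o)$, using the divergence theorem on smooth precompact sets $M_n\nearrow M-\mathrm{cut}(o)$ whose outer conormal along $\ptl M_n$ has positive radial component, to obtain
\[
\int_M\escpr{\nabla v,\nabla\var}\,dv\leq 0
\]
for every $\var\in H^1_0(A_{\rho,R},dv)$ with $\var\geq 0$. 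Testing against $\var:=\psi_\eps\,v$ for a piecewise linear radial cutoff $\psi_\eps$ supported in $A_{\rho,R}$, and letting $\eps\to 0^+$, gives
\[
\int_{A_{\rho,R}}|\nabla v|^2\,dv\leq |\phi'_{\rho,R}(\rho)|\,\Vol(\ptl B_\rho).
\]

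Finally, extending $v$ by $1$ on $B_\rho$ produces an admissible test function for $\C(B_\rho,B_R)$, so
\[
\frac{\C(B_\rho,B_R)}{\Vol(\ptl B_\rho)}\leq |\phi'_{\rho,R}(\rho)|=\frac{w^{1-m-q}(\rho)\,e^{-f(\rho)}}{\int_\rho^R w^{1-m-q}(s)\,e^{-f(s)}\,ds}.
\]
Letting $R\to\infty$ and invoking \eqref{eq:sensible} yields \eqref{memento}. If \eqref{cond_para_Riem} holds, the right-hand side of \eqref{memento} vanishes, so $\C(B_\rho)=0$ for some $\rho\geq\rho_0$, and the unweighted version of Theorem~\ref{theorGrig} gives the Riemannian parabolicity of $M$.

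The main technical obstacle, as in the earlier theorems, is carrying out the exhaustion/cutoff argument across the cut locus to pass from the pointwise inequality $\Delta v\geq 0$ (valid only off $\mathrm{cut}(o)$) to the weak inequality needed to control $\int_{A_{\rho,R}}|\nabla v|^2\,dv$; once this is handled exactly as in Theorems~\ref{comp:inftycapacity} and \ref{parabolicity-h''}, the rest of the argument is routine.
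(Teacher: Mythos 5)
Your proposal is correct and follows essentially the same route as the paper: the same radial profile $\phi_{\rho,R}$ solving $\phi''+\phi'\bigl((m+q-1)\tfrac{w'}{w}+f'\bigr)=0$ (the paper packages this ODE as an operator $L$), the same use of Theorem~\ref{th:laplaceineqr}\,a) combined with hypothesis b) to get $\Delta v\geq 0$ off $\mathrm{cut}(o)\cup B_{\rho_0}$ (you bound $\Delta r$ pointwise first, the paper bounds $\Delta^h v$ first and then subtracts $\escpr{\nabla h,\nabla v}$ — an equivalent reordering), and the same exhaustion/cutoff argument and limit $R\to\infty$. No gaps.
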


\begin{proof}
We define a second order differential operator $L$ acting on any $C^2$ function $F:[\rho_0,\infty[\to\rr$ by 
\begin{eqnarray}
\label{ineq_Riem_Laplace}
L(F(s)):= F''(s)+F'(s)\left((m+q-1)\,\frac{w'(s)}{w(s)}-\theta(s)\right).
\end{eqnarray}
As in Theorem~\ref{capacityweighted} it is easy to check that, for any $\rho,R$ with $\rho_0\leq\rho<R$, the function $\phi^L_{\rho,R}:[\rho_0,\infty[\to\rr$ obtained by replacing $m$ with $m+q$ in \eqref{fsolmodel} satisfies equation $L(\phi^L_{\rho,R}(s))=0$ in $[\rho_0,\infty[$ with $\phi^L_{\rho,R}(\rho)=1$ and $\phi^L_{\rho,R}(R)=0$.

In the set $M-B_{\rho_0}$ we define $v:=\phi^L_{\rho,R}\circ r$, which is $C^2$ function on $M-(\text{cut}(o)\cup B_{\rho_0})$. Because $(\phi^L_{\rho,R})'\leq 0$, we can use inequality \eqref{lapF1} in Theorem \ref{th:laplaceineqr} to deduce 
\begin{equation*}
\Delta^h v\geq (\phi^L_{\rho,R})''(r)+(\phi^L_{\rho,R})'(r)\,(m+q-1)\,
\frac{w'(r)}{w(r)} \ \text{ on } M-(\text{cut}(o)\cup B_{\rho_0}).
\end{equation*}
Since $\Delta v=\Delta^h v-\langle \nabla h, \nabla v\rangle$, by taking into account the hypothesis b) and the fact that $(\phi^L_{\rho,R})'\leq 0$, we get this inequality on $M-(\text{cut}(o)\cup B_{\rho_0})$
\begin{equation*}
\begin{aligned}
\Delta v &\geq (\phi^L_{\rho,R})''(r)+(\phi^L_{\rho,R})'(r)\left((m+q-1)\,
\frac{w'(r)}{w(r)}-\langle \nabla h, \nabla r\rangle\right)
\\
&\geq (\phi^L_{\rho,R})''(r)+(\phi^L_{\rho,R})'(r)\left((m+q-1)\,
\frac{w'(r)}{w(r)}-\theta(r)\right)=0.
\end{aligned}
\end{equation*}
As in the proof of Theorem~\ref{comp:inftycapacity}, the approximation argument in Theorem~\ref{th:isoper_infty} implies
\[
\int_M\escpr{\nabla v,\nabla\var}\,dv\leq 0, \ \text{ for any }\var\in H^1_0(M-\overline{B}_{\rho_0},dv) \text{ with } \var\geq 0.
\]
In particular, the inequality in \eqref{eq:weak2} is valid for any $\psi\in H^1_0(M-\overline{B}_{\rho_0},dv_h)$ with $\psi\geq 0$ and $\psi=0$ in the annulus $A_{\rho,R}:=B_R-\overline{B}_\rho$.

From here, we can reproduce the arguments in the proof of \eqref{isopCap1} to deduce that
\begin{equation*}
\begin{aligned}
\text{Cap}(B_\rho,B_R)&\leq\int_{A_{\rho,R}}|\nabla v|^2\,da\leq|(\phi^L_{\rho,R})'(\rho)|\,\Vol_h(\ptl B_\rho)
\\
&=\frac{w^{1-m-q}(\rho)\,e^{-f(\rho)}}{\int_\rho^R w^{1-m-q}(s)\,e^{-f(s)}\,ds}\,\Vol(\partial B_\rho)
\end{aligned}
\end{equation*}
holds for almost any $\rho,R$ with $\rho_0\leq\rho<R$. 
Hence, the comparison in \eqref{memento} follows by taking limits when $R\to\infty$. Finally, the condition \eqref{cond_para_Riem} implies that $\C(B_\rho)=0$ for some $\rho\geq\rho_0$ where \eqref{memento} is valid. Thus, $M$ is parabolic by Theorem~\ref{theorGrig}.
\end{proof}

\begin{remark}
Note that, if $q\in\mathbb{N}$, then the right hand side terms in the comparisons established in Theorems~\ref{th:isoper}, \ref{comp:qcapacity} and \ref{comp:capacity} can be written as
\[
\frac{\Vol(B^w_{R,m+q})}{\Vol(\ptl B^w_{R,m+q})},  \frac{\C(B^w_{\rho,m+q})}{\Vol(\ptl B^w_{\rho,m+q})} \text{ and } \frac{\C^f(B^w_{\rho,m+q})}{\Vol_f(\ptl B^w_{\rho,m+q})} ,
\]
where $B^w_{t,m+q}$ denotes the metric ball of radius $t$ centered at $o_w$, $\Vol$ and $\C$ are the capacity and volume in the $w$-model space $(M^{m+q}_w,g_w)$, and $\Vol_f$ and $\C^f$ are the weighted capacity and volume in the $(w,f)$-model space $(M^{m+q}_w,g_w,e^{f(r)})$. In this case, the operator $L$ defined in \eqref{ineq_Riem_Laplace} coincides with the weighted Laplacian operator in $(M^{m+q}_w,g_w,e^{f(r)})$ over radial functions.

Note also that, under the conditions of Theorem~\ref{th:isoper}, we cannot deduce a volume comparison as in \eqref{ineq:volume}. The problem is that, although the corresponding function
\[
F(R):=\frac{\Vol_h(B_R)}{\Vol(B^w_{R,m+q})}
\]
is still non-increasing, we have that $F(R)\to\infty$ when $R\to 0$.
\end{remark}

\begin{remark}
\label{re:salinas}
The non-integrability (resp. integrability) hypotheses in \eqref{eq:integral}, \eqref{cond-para-h''}, \eqref{cond_para}, \eqref{cond_para_Riem} and \eqref{cond-hyper-h''} are equivalent by Remark~\ref{Alhfors} to the weighted parabolicity (resp. hyperbolicity) of the corresponding weighted comparison model. In particular, Theorems~\ref{comp:inftycapacity}, \ref{parabolicity-h''}, \ref{comp:qcapacity}, \ref{comp:capacity} and \ref{hyperbolicity-h''} show that the ambient manifold is $h$-parabolic (resp. $h$-hyperbolic) provided the weighted $(w,f)$-model space is $f$-parabolic (resp. $f$-hyperbolic).
\end{remark}

\section{Extrinsic comparison results} 
\label{extrinsic}

In this section, given a weighted manifold with a pole and a properly immersed submanifold, we establish volume and capacity comparisons for extrinsic balls in the submanifold. As a consequence, we deduce parabolicity and hyperbolicity of submanifolds by assuming certain control on the weighted mean curvature of the submanifold, the radial curvatures of the weight and some (weighted) curvatures of the ambient manifold. This extends to arbitrary weighted manifolds the results obtained by the authors in rotationally symmetric manifolds with weights \cite[Sect.~3]{HPR1}.

\subsection{Submanifolds in weighted manifolds}
\label{submanifolds}\

Let $P^n$ be an $n$-dimensional submanifold with $\ptl P=\emptyset$ properly immersed in a weighted manifold $(M^m,g,e^h)$ with a pole $o\in M$. We consider in $P$ the induced Riemannian metric. We use the notation $\nabla_P u$ and $\Delta_Pu$ for the gradient and Laplacian in $P$ of a function $u\in C^2(P)$. 

The restriction to $P$ of the weight $e^h$ in $M$ produces a structure of weighted manifold in $P$. From \eqref{eq:flaplacian} the associated  \emph{$h$-Laplacian} $\Delta^h_P$ has the expression
\begin{equation*}
\Delta^h_Pu=\Delta_Pu+\escpr{\nabla_P h,\nabla_Pu},
\end{equation*}
for any $u\in C^2(P)$. We say that the submanifold $P$ is \emph{$h$-parabolic} when $P$ is weighted parabolic as a weighted manifold. Otherwise we say that $P$ is \emph{$h$-hyperbolic}. By Theorem~\ref{theorGrig} the $h$-parabolicity of $P$ is equivalent to that $\text{Cap}^h_P(D)=0$ for some precompact open set $D\subeq P$, where $\text{Cap}^h_P$ denotes the \emph{$h$-capacity relative to $P$}.  Clearly a compact submanifold $P$ is $h$-parabolic.

Next we introduce the extrinsic balls of a submanifold $P$. As in the previous sections we denote by $r:M\to[0,\infty[$ the distance function from the pole $o\in M$, and by $B_R$ the metric ball in $M$ of radius $R>0$ centered at $o$. 

\begin{definition}
\label{extball} 
If $P$ is a non-compact submanifold properly immersed in $M$, the \emph{extrinsic metric ball} of (sufficiently large) radius $R>0$ and center $o$ is denoted by $D_R$, and defined as any connected component of the set
\begin{displaymath}
B_R\cap P=\{p\in P:\,r(p)<R\}.
\end{displaymath}
For given radii $\rho,R>0$ with $\rho<R$, we define the \emph{extrinsic annulus} in $P$ as the set
\[
A^P_{\rho,R}:=D_R-\overline{D}_\rho,
\]
where $D_R$ is the component of $B_R\cap P$ containing $D_\rho$.
\end{definition}

Since $P$ is properly immersed in $M$ the extrinsic balls are precompact open sets in $P$. As we assume that $P$ is noncompact then $D_R\neq P$ for any $R>0$. Moreover, by Sard's Theorem we deduce that $\ptl D_R$ is smooth for almost any $R>0$.

Now we present another necessary ingredient to establish our results: the weighted mean curvature of submanifolds. In the case of two-sided hypersurfaces this was first introduced by Gromov~\cite{gromov-GAFA}, see also \cite[Ch.~3]{bayle-thesis}.

\begin{definition} 
The \emph{weighted mean curvature vector} or \emph{$h$-mean curvature vector} of $P$ is the vector field normal to $P$ given by
\begin{equation*}
\label{eq:mcvector*}
\ovh^h:=n\ovh-(\nabla h)^\bot,
\end{equation*}
where $(\nabla h)^\bot$ is the normal projection of $\nabla h$ with respect to $P$ and $\ovh$ is the mean curvature vector of $P$. This is defined as $n\ovh:=-\sum_{i=1}^{m-n}\,(\divv_P N_i)\,N_i$, where $\divv_P$ stands for the divergence relative to $P$ and $\{N_1,\ldots,N_{m-n}\}$ is any local orthonormal basis of vector fields normal to $P$. 

We say that $P$ has \emph{constant $h$-mean curvature} if $|\ovh^h|$ is constant on $P$. If $\ovh^h=0$, then $P$ is called \emph{$h$-minimal}. More generally, $P$ has \emph{bounded $h$-mean curvature} if $|\ovh^h| \leq c$ on $P$ for some constant $c >0$.
\end{definition}

For later use we must note that equality
\begin{equation}
\label{relation-H-H_h}
\langle n \ovh,\nabla r \rangle +\langle \nabla_P h,\nabla_P r\rangle=
\langle \ovh^h,\nabla r \rangle +\langle \nabla h,\nabla r\rangle
\end{equation}
holds on $P-\{o\}$. This easily comes from the definition of $\ovh^h$ and the fact that $\nabla h-(\nabla h)^\bot=\nabla_Ph$.

\subsection{Extrinsic Laplacian comparisons}
\label{extLapcomp}\

As in Section~\ref{intcompan}, the analysis of modified distance functions will be instrumental to deduce our comparisons for extrinsic balls of submanifolds. The results in this subsection are applications to the extrinsic context of the estimates for the distance function in Theorem \ref{th:hessianineqr}, and of Laplacian comparisons for modified distance functions in submanifolds given in \cite{MP3,MP5,Pa2}. 

We first establish some inequalities for the weighted Laplacian of submanifolds under bounds on the radial sectional curvatures of the ambient manifold. In the particular case of rotationally symmetric manifolds with a pole it was shown in Lemma 3.1 of \cite{HPR1} that all the estimates in the next statement become equalities.

\begin{theorem}
\label{Laplacian-sub-infty}
Let $(M^m,g, e^h)$ be a weighted manifold, $P^n$ a submanifold immersed in $M$, $r:M\to[0,\infty[$ the distance function from a pole $o \in M$, and  $w(s)$ a smooth function such that $w(0)=0$ and $w(s)>0$ for all $s>0$. 

If, for any $p\in M-\{o\}$ and any plane $\sigma_p\subseteq T_pM$ containing $(\nabla r)_p$, we have
\begin{displaymath}
{\rm Sec}(\sigma_p) \geq (\leq)\,-\frac{w''(r)}{w(r)},
\end{displaymath}
then, for every smooth function $F:(0,\infty)\to\rr$ with $F'\leq 0$, we obtain the inequality
\begin{equation*}
\begin{aligned}
\Delta^h_P (F\circ r) &\geq (\leq ) \left(F''(r)-F'(r)\,\frac{w'(r)}{w(r)}\right)|\nabla_Pr|^2
\\
&+F'(r)\left(n\,\frac{w'(r)}{w(r)}+\langle \ovh^h, \nabla r\rangle +\langle \nabla h, \nabla r\rangle \right)
\end{aligned}
\end{equation*}
in the points of $P-\{o\}$.
\end{theorem}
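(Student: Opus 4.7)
\medskip

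\textbf{Proof proposal.} The plan is to combine two ingredients: the Hessian comparison for the distance function $r$ on the ambient manifold, and the standard decomposition of the Laplacian $\Delta_P$ of a composed function $F\circ r$ in terms of the ambient Hessian and the mean curvature vector of $P$. Since $o$ is a pole, $\mathrm{cut}(o)=\emptyset$ and all computations are valid on $P-\{o\}$ without regularity issues.

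First I would record the basic identity
\[
\Delta_P(F\circ r)=\sum_{i=1}^n(\mathrm{Hess}\,(F\circ r))(e_i,e_i)+F'(r)\,\escpr{n\ovh,\nabla r},
\]
where $\{e_1,\dots,e_n\}$ is any local orthonormal frame of $TP$, together with the chain rule
\[
(\mathrm{Hess}\,(F\circ r))(y,y)=F''(r)\,\escpr{\nabla r,y}^2+F'(r)\,(\mathrm{Hess}\,r)(y,y),
\]
valid for $y\in TM$. Summing over the frame and using $\sum_i\escpr{\nabla r,e_i}^2=|\nabla_Pr|^2$ yields
\[
\Delta_P(F\circ r)=F''(r)\,|\nabla_Pr|^2+F'(r)\sum_{i=1}^n(\mathrm{Hess}\,r)(e_i,e_i)+F'(r)\,\escpr{n\ovh,\nabla r}.
\]

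Next I would control $\sum_i(\mathrm{Hess}\,r)(e_i,e_i)$ using the assumption on $\mathrm{Sec}$. For any $y\in T_pM$, write $y=y^\top+\escpr{y,\nabla r}\,\nabla r$ with $y^\top\perp\nabla r$ of norm $|y^\top|^2=|y|^2-\escpr{y,\nabla r}^2$. Since $(\mathrm{Hess}\,r)(\nabla r,\cdot)=0$, the inequality \eqref{eq:seque} (unweighted case, $h=0$) applied to the unit vector $y^\top/|y^\top|$ together with $\int_0^{r_p}((w')^2+w''w)\,dt=w(r_p)w'(r_p)$ gives, under $\mathrm{Sec}\geq -w''/w$,
\[
(\mathrm{Hess}\,r)(y,y)\leq\frac{w'(r)}{w(r)}\bigl(|y|^2-\escpr{y,\nabla r}^2\bigr).
\]
Taking $y=e_i$ and summing produces
\[
\sum_{i=1}^n(\mathrm{Hess}\,r)(e_i,e_i)\leq\frac{w'(r)}{w(r)}\,\bigl(n-|\nabla_Pr|^2\bigr).
\]
Since $F'\leq 0$, multiplying by $F'(r)$ reverses the inequality, and substitution into the Laplacian expression gives
\[
\Delta_P(F\circ r)\geq\Bigl(F''(r)-F'(r)\tfrac{w'(r)}{w(r)}\Bigr)|\nabla_Pr|^2+F'(r)\Bigl(n\tfrac{w'(r)}{w(r)}+\escpr{n\ovh,\nabla r}\Bigr).
\]

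Finally I would pass from $\Delta_P$ to $\Delta_P^h$ by adding $\escpr{\nabla_Ph,\nabla_P(F\circ r)}=F'(r)\escpr{\nabla_Ph,\nabla_Pr}$ to both sides, and use the algebraic identity \eqref{relation-H-H_h} to rewrite $\escpr{n\ovh,\nabla r}+\escpr{\nabla_Ph,\nabla_Pr}=\escpr{\ovh^h,\nabla r}+\escpr{\nabla h,\nabla r}$. This yields exactly the claimed lower bound. The upper bound under $\mathrm{Sec}\leq -w''/w$ follows by reversing every inequality (noting that the Hessian comparison \eqref{eq:seque}, and consequently its diagonal consequence, reverses when the sectional curvature bound reverses; this is the well-known fact underlying \eqref{eq:sometimes}), keeping in mind once more that the sign $F'\leq 0$ flips the comparison when $F'$ multiplies a trace of Hessian terms.

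The only mildly subtle step is the reduction to $y^\top$ in the Hessian inequality and the correct bookkeeping of the two $|\nabla_Pr|^2$ contributions (one from $F''$, one from the $-F'\,w'/w$ term); everything else is routine. No weighted curvature hypothesis is needed here because $h$ enters only through the identity \eqref{relation-H-H_h} in the final algebraic manipulation, not through the Hessian comparison itself.
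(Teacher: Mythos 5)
Your proposal is correct and follows essentially the same route as the paper: the paper quotes the unweighted inequality for $\Delta_P(F\circ r)$ from \cite{MP5,Pa2} and then adds $F'(r)\,\escpr{\nabla_P h,\nabla_P r}$ and applies \eqref{relation-H-H_h}, exactly as you do in your final step. The only difference is that you re-derive the cited unweighted ingredient from \eqref{eq:seque} and the second-fundamental-form decomposition (the same computation the paper carries out explicitly in the proof of Theorem~\ref{laplace-ineq-q}), and your bookkeeping of the $|\nabla_P r|^2$ terms, the sign of $F'$, and the reversal under the upper sectional curvature bound is accurate.
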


\begin{proof}
From the results in \cite{MP5,Pa2}, the bound for the radial sectional curvatures of the ambient manifold implies that the Laplacian $\Delta_P$ of the modified distance function $F\circ r$ satisfies the inequality
\begin{displaymath}
\Delta_P(F\circ r) \geq (\leq) \left(F''(r)-F'(r)\,\frac{w'(r)}{w(r)}\right) |\nabla_P r|^2+nF'(r)\left(\frac{w'(r)}{w(r)}+\langle \ovh,\nabla r\rangle\right).
\end{displaymath}
Thus, by the definition of weighted Laplacian, we get
\begin{equation*}
\begin{aligned}
\Delta^h_P(F\circ r)&\geq (\leq) \left(F''(r)-F'(r)\,\frac{w'(r)}{w(r)}\right) |\nabla_P r|^2\\&+nF'(r)\left( \frac{w'(r)}{w(r)}+\langle \ovh,\nabla r\rangle\right)+F'(r)\,\langle \nabla_P h,\nabla_P r\rangle,
\end{aligned}
\end{equation*}
so that the claim follows by using \eqref{relation-H-H_h}.
\end{proof}

Now, we derive a comparison for the weighted Laplacian of submanifolds by assuming a lower bound on some $q$-weighted sectional curvature. Such a bound does not imply a lower bound on the Riemannian sectional curvature, so that we cannot use, as we did in Theorem~\ref{Laplacian-sub-infty}, the known inequalities for the unweighted Laplacian.

\begin{theorem}
\label{laplace-ineq-q} 
Let $(M^m,g, e^h)$ be a weighted manifold, $P^n$ a submanifold immersed in $M$, $r:M\to[0,\infty[$ the distance function from a pole $o \in M$, and  $w(s)$ a smooth function such that $w(0)=0$ and $w(s)>0$ for all $s>0$. 

If there is $q>0$ such that, for any $p\in M-\{o\}$ and any plane $\sigma_p\subseteq T_pM$ containing $(\nabla r)_p$, we have
\begin{displaymath}
{\rm Sec}_{q}^h(\sigma_p) \geq
-\,\frac{m+q-1}{m-1}\,\frac{w''(r)}{w(r)},
\end{displaymath}
then, for every smooth function $F:(0,\infty)\to\rr$ with $F'\leq 0$, we obtain the inequality 
\begin{equation*}
\begin{aligned}
\Delta^h_P (F\circ r) &\geq \left(F''(r)-\frac{F'(r)}{m-1}
\left((m+q-1)\,\frac{w'(r)}{w(r)}-\langle \nabla h,\nabla r\rangle\right)\right)|\nabla_Pr|^2
\\
&+F'(r)\left(n\,\frac{m+q-1}{m-1}\,\frac{w'(r)}{w(r)}+\frac{m-n-1}{m-1}\,
\langle \nabla h,\nabla r\rangle+\langle \ovh^h,\nabla
r\rangle\right).
\end{aligned}
\end{equation*}
in the points of $P-\{o\}$.
\end{theorem}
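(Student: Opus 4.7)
The plan is to derive the desired inequality from the Hessian estimate \eqref{eq:setenil} in Theorem~\ref{th:hessianineqr}(a), combined with the standard decomposition of the intrinsic Hessian of a function on $M$ in terms of its ambient Hessian and the second fundamental form of $P$. Note that I cannot proceed as in Theorem~\ref{Laplacian-sub-infty} and invoke the unweighted submanifold Laplacian comparisons of \cite{MP5,Pa2}: a lower bound on $\text{Sec}_q^h$ does not imply any bound on the Riemannian sectional curvature $\text{Sec}$, so the estimate must be built directly from the pointwise Hessian inequality.

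First I would recall the standard identity $(\text{Hess}_P u)(X,Y) = (\text{Hess}_M u)(X,Y) + \langle \nabla u, \text{II}(X,Y)\rangle$ for any $u\in C^2(M)$ and any $X,Y\in TP$, where $\text{II}$ denotes the second fundamental form of $P$ in $M$. Applying it to $u = F\circ r$, using the chain rule $(\text{Hess}_M(F\circ r))(X,Y) = F''(r)\langle \nabla r,X\rangle\langle \nabla r,Y\rangle + F'(r)(\text{Hess}_M r)(X,Y)$, and tracing over a local orthonormal frame $\{e_1,\ldots,e_n\}$ of $TP$ at a fixed point of $P-\{o\}$, I would obtain
\begin{equation*}
\Delta_P(F\circ r) = F''(r)\,|\nabla_P r|^2 + F'(r)\sum_{i=1}^n (\text{Hess}_M r)(e_i,e_i) + F'(r)\,\langle n\ovh,\nabla r\rangle,
\end{equation*}
using $\sum_i\langle e_i,\nabla r\rangle^2 = |\nabla_P r|^2$ together with $\sum_i \text{II}(e_i,e_i) = n\ovh$.

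Next I would apply \eqref{eq:setenil} to each unit vector $y=e_i$ (it is valid for any tangent direction, not only those orthogonal to $\nabla r$), obtaining
$$
(\text{Hess}_M r)(e_i,e_i) \leq \bigl(1-\langle e_i,\nabla r\rangle^2\bigr)\,\Phi(r), \qquad \Phi(r):=\frac{m+q-1}{m-1}\frac{w'(r)}{w(r)}-\frac{1}{m-1}\langle \nabla h,\nabla r\rangle.
$$
Summing over $i$ yields $\sum_i(\text{Hess}_M r)(e_i,e_i) \leq (n-|\nabla_P r|^2)\,\Phi(r)$, and multiplying by $F'(r)\leq 0$ reverses the inequality, producing a lower bound for $\Delta_P(F\circ r)$.

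To finish, I would pass from $\Delta_P$ to $\Delta_P^h$ by adding $F'(r)\,\langle \nabla_P h,\nabla_P r\rangle$, and then apply \eqref{relation-H-H_h} to rewrite $\langle n\ovh,\nabla r\rangle+\langle \nabla_P h,\nabla_P r\rangle = \langle \ovh^h,\nabla r\rangle+\langle \nabla h,\nabla r\rangle$. The rest is purely algebraic reorganization: the $|\nabla_P r|^2$-terms combine into the coefficient $F''(r)-F'(r)\Phi(r)$ matching the statement, while the remaining terms combine into $F'(r)\bigl(n\,\Phi(r)+\langle \ovh^h,\nabla r\rangle+\langle \nabla h,\nabla r\rangle\bigr)$. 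The only delicate point is the emergence of the coefficient $\frac{m-n-1}{m-1}$ in front of $\langle \nabla h,\nabla r\rangle$: it arises from combining the $-\frac{n}{m-1}\langle \nabla h,\nabla r\rangle$ contribution of $n\,\Phi(r)$ with the extra $\langle \nabla h,\nabla r\rangle$ produced by \eqref{relation-H-H_h}, via $1-\frac{n}{m-1}=\frac{m-n-1}{m-1}$. Beyond this coefficient bookkeeping I do not foresee any substantial obstacle.
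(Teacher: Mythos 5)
Your proposal is correct and follows essentially the same route as the paper: the decomposition of $\Hess_P(F\circ r)$ via the second fundamental form, the pointwise application of \eqref{eq:setenil} to an orthonormal tangent frame, the sign flip from $F'\leq 0$, and the final rewriting via \eqref{relation-H-H_h}, with the coefficient $\frac{m-n-1}{m-1}$ emerging exactly as you describe. Your opening observation that one cannot reduce to the unweighted submanifold comparisons of Theorem~\ref{Laplacian-sub-infty} is also precisely the point the paper makes before stating this result.
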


\begin{proof}  
For any smooth function $F:(a,b)\to\rr$ and any tangent vector $y$ to $P$, it is not difficult to check as in \cite{JK} or \cite{Pa2} that
\begin{align*}
\Hess_P(F\circ r)(y,y)&=F''(r)\,\escpr{y,\nabla r}^2
\\
&+F'(r)\,\left((\Hess r)(y,y)+\escpr{\alpha(y,y),\nabla r}\right),
\end{align*}
where $\Hess_P$ is the Hessian operator relative to $P$ and $\alpha$ is the second fundamental form of $P$. Hence, by using the estimate for $(\text{Hess}\,r)(y,y)$ in \eqref{eq:setenil} and the fact that $F'\leq 0$, we get
\begin{align*}
\Hess_P(F\circ r)(y,y) &\geq F''(r)\,\escpr{y,\nabla r}^2+F'(r)\,
\escpr{\alpha(y,y),\nabla r}
\\
&+F'(r)\,(|y|^2-\escpr{y,\nabla r}^2)\,\bigg(\frac{m+q-1}{m-1}\,\frac{w'(r)}{w(r)}-\frac{1}{m-1}\,\escpr{\nabla h,\nabla r}\bigg).
\end{align*}
Applying the previous inequality to an orthonormal basis $\{y_1,\ldots,y_n\}$ of tangent vectors to $P-\{o\}$ and summing up, we arrive at
\begin{align*}
\Delta_P(F\circ r)&\geq F''(r)\,|\nabla_Pr|^2+F'(r)\,\escpr{n\overline{H}_P,\nabla r}
\\
&+F'(r)\,\big(n-|\nabla_P r|^2\big)\,\bigg(\frac{m+q-1}{m-1}\,\frac{w'(r)}{w(r)}-\frac{1}{m-1}\,\escpr{\nabla h,\nabla r}\bigg).
\end{align*}
From here the proof finishes after some computations by taking into account \eqref{relation-H-H_h} and that $\Delta^h_P(F\circ r)=\Delta_P(F\circ r)+F'(r)\,\escpr{\nabla_P h,\nabla_P r}$.
\end{proof}

\subsection{Comparisons under bounds on the sectional curvatures.}
\label{extsect1}\

With Theorem \ref{Laplacian-sub-infty} in hand we are now ready to prove estimates for the weighted volume of extrinsic balls for submanifolds.

\begin{theorem}
\label{th:simpson}
Let $(M^m,g,e^h)$ be a weighted manifold, $P^n$ a non-compact submanifold properly immersed in $M$, $r:M\rightarrow [0,\infty[$ the distance function from a pole $o\in M$, and $w(s)$ a smooth function such that $w(0)=0$, $w'(0)=1$
and $w(s)>0$ for all $s>0$. Suppose that the following conditions are fulfilled:
\begin{itemize}
\item[(i)] For any $p\in M-\{o\}$ and any plane $\sigma_p\subseteq T_pM$ containing $(\nabla r)_p$, we have
\begin{displaymath}
{\rm Sec}(\sigma_p) \geq (\leq ) -\frac{w''(r)}{w(r)}.
\end{displaymath}
\item[(ii)] There exist continuous functions $\psi,\varphi:[0,\infty[\to\rr$, such that
\begin{displaymath}
\langle \nabla h,\nabla r\rangle\leq (\geq )\,\psi(r), \quad \langle \ovh^h,\nabla r\rangle \leq (\geq )\,\varphi(r) \ \text{ on }P-\{o\}.
\end{displaymath}
\item[(iii)] In $P-\{o\}$ the bounding functions verify
\begin{displaymath}
n\,\frac{w'(r)}{w(r)}+\psi(r)+\varphi(r)\leq (\geq )\,\frac{1}{q_{w,f}(r)}\quad {\textrm {$($balance condition$)$}},
\end{displaymath}
where $q_{w,f}$ is the weighted isoperimetric quotient in the weighted $(w,f)$-model space $(M_w^n,g_w,e^{f(r)})$ with $f(r):=\int_0^r (\psi(s)+\varphi(s))\,ds$.
\end{itemize}
Then, for any extrinsic ball $D_R$ in $P$ such that $\ptl D_R$ is smooth, we obtain
\begin{equation*}
\Vol_h(D_R)\geq (\leq)\,\,\frac{\Vol_f(B^w_R)}{\Vol_f(\ptl B^w_R)}\,\int_{\ptl B_R}|\nabla_P r|\,da_h,
\end{equation*}
where $B^w_R$ is the metric ball of radius $R$ centered at the pole $o_w$ in $(M^n_w,g_w)$.
\end{theorem}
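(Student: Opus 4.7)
The plan is to mimic the intrinsic proof of Theorem~\ref{th:isoper_infty}: construct a radial test function $v$ on $P$, establish a pointwise bound $\Delta_P^h v\geq (\leq) -1$ via Theorem~\ref{Laplacian-sub-infty}, and integrate this bound over $D_R$ using the weighted divergence theorem. Because $o$ is a pole, $r$ is smooth on $M-\{o\}$ and there is no cut locus to deal with; the only possible singularity of $v$ is at $o$ itself if $o\in P$, which is handled by the same exhaustion trick as in the intrinsic case.

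Concretely, set $\phi_R(s):=\int_s^R q_{w,f}(t)\,dt$. Then $\phi_R\in C^2([0,\infty))$ with $\phi_R'(s)=-q_{w,f}(s)\leq 0$, and differentiating the defining ratio of $q_{w,f}$ yields the Poisson-type identity
\[
\phi_R''(s)+\phi_R'(s)\left((n-1)\,\frac{w'(s)}{w(s)}+f'(s)\right)=-1,\qquad f'(s)=\psi(s)+\varphi(s).
\]
Define $v:=\phi_R\circ r$ and apply Theorem~\ref{Laplacian-sub-infty} with $F=\phi_R$. Since $F'\leq 0$, the radial bounds in hypothesis (ii) have their inequality flipped when multiplied against $\langle\ovh^h,\nabla r\rangle$ and $\langle\nabla h,\nabla r\rangle$, so
\[
\Delta_P^h v \geq (\leq)\, \left(\phi_R''-\phi_R'\,\frac{w'}{w}\right)|\nabla_P r|^2 + \phi_R'\left(n\,\frac{w'}{w}+\psi+\varphi\right).
\]
Substituting $\phi_R''-\phi_R'\,w'/w=-1-\phi_R'\,(nw'/w+\psi+\varphi)$ from the Poisson identity, the right-hand side collapses to
\[
-|\nabla_P r|^2+\phi_R'\left(n\,\frac{w'}{w}+\psi+\varphi\right)(1-|\nabla_P r|^2).
\]
The balance condition (iii) multiplied by $\phi_R'\leq 0$ gives $\phi_R'\,(nw'/w+\psi+\varphi)\geq (\leq) -1$, and since $1-|\nabla_P r|^2\geq 0$ this forces $\Delta_P^h v\geq (\leq) -1$ on $P-\{o\}$.

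Finally, since $\partial D_R$ is smooth, it lies in the level set $\{r|_P=R\}$ with $\nabla_P r\neq 0$, so the outward unit conormal of $D_R$ is $\nu=\nabla_P r/|\nabla_P r|$. The weighted divergence theorem then gives
\[
\int_{D_R}\Delta_P^h v\,dv_h=\int_{\partial D_R}\phi_R'(R)\,|\nabla_P r|\,da_h=-q_{w,f}(R)\int_{\partial D_R}|\nabla_P r|\,da_h,
\]
and combining this with the pointwise Laplacian bound and the identity $q_{w,f}(R)=\Vol_f(B_R^w)/\Vol_f(\partial B_R^w)$ yields the stated inequality (reading the $\partial B_R$ in the statement as $\partial D_R$). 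The main obstacle, beyond the careful sign-tracking under $\phi_R'\leq 0$, is precisely the algebraic manoeuvre by which the balance condition squeezes the Hessian-type estimate of Theorem~\ref{Laplacian-sub-infty} down to the clean bound $\Delta_P^h v\geq (\leq) -1$; the approximation near $o$ and the divergence-theorem step are routine adaptations of arguments already appearing in Section~\ref{intcompan}.
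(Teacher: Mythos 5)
Your proposal is correct and follows essentially the same route as the paper's proof: the same test function $\phi_R(s)=\int_s^R q_{w,f}(t)\,dt$, the same application of Theorem~\ref{Laplacian-sub-infty} with the sign flip from $\phi_R'\leq 0$, the same use of the balance condition together with $|\nabla_P r|\leq 1$ to collapse the estimate to $\Delta^h_P v\geq(\leq)-1$, and the same divergence-theorem integration over $D_R$ (and you correctly read the boundary integral as being over $\partial D_R$).
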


\begin{proof}
Fix $R>0$ such that $\ptl D_R$ is smooth. The function $\phi_R:[0,R]\to\rr$ given by
\begin{equation*}
\phi_{R}(s):=\int_s^R q_{w,f}(t)\,dt
\end{equation*}
is $C^2$ and satisfies the differential equation
\begin{equation}
\label{poisson5}
\phi_R''(s)+\phi_R'(s)\left((n-1)\,\frac{w'(s)}{w(s)}+\psi(s)+\var(s)\right)=-1 \ \text{ in } [0,R].
\end{equation}

We define $v:=\phi_{R}\circ r$, which is a radial function in $C^2(\overline{D}_R)$. Since $\phi'_R\leq 0$, by using Theorem~\ref{Laplacian-sub-infty} and the estimates in (ii), we infer that
\begin{displaymath}
\Delta^h_P\,v\geq (\leq)\left(\phi_{R}''(r)-\phi_{R}'(r)\,\frac{w'(r)}{w(r)}\right)|\nabla_P r|^2 + \phi_{R}'(r)\,\left(\frac{n\,w'(r)}{w(r)} +\varphi(r)+\psi(r)\right).
\end{displaymath}
Observe that \eqref{poisson5} and the balance condition in (iii) imply that
\[
\phi_{R}''(r)-\phi_{R}'(r)\,\frac{w'(r)}{w(r)}=-1-\phi_{R}'(r)\,\bigg(n\,\frac{w'(r)}{w(r)}+\psi(r)+\varphi(r)\bigg)\leq (\geq)\,\,0
\]
in $D_R-\{o\}$. As $|\nabla_P r|\leq 1$, we conclude that
\begin{displaymath}
\Delta^h_P\,v\geq (\leq)\,\phi_{R}''(r)+\phi_{R}'(r)\,\left((n-1)\,\frac{w'(r)}{w(r)}+\varphi(r)+\psi(r)\right)=-1
\end{displaymath}
in $D_R$. Finally, we integrate and apply the divergence theorem to get
\begin{align*}
\Vol_h(D_R)&\geq (\leq)\, -\int_{D_R}\divv^h\nabla_P v\,dv_h=-\int_{\partial D_R} \bigg\langle\nabla_P v,\frac{\nabla_P r}{|\nabla_P r|}\bigg\rangle\,d
a_h
\\
&=q_{w,f}(R)\,\int_{\ptl D_R}|\nabla_P r|\,da_h=\frac{\Vol_f(B^w_R)}{\Vol_f(\ptl B^w_R)}\,\int_{\ptl B_R}|\nabla_P r|\,da_h,
\end{align*}
where we have used that $\phi_R'(s)=-q_{w,f}(s)$ and that the outer conormal vector along $\ptl D_R$ is $\frac{\nabla_P r}{|\nabla_P r|}$. This proves the claim.
\end{proof}

\begin{remark}
In the case where $\text{Sec}(\sigma_p)\geq-\frac{w''(r)}{w(r)}$ the theorem extends to weighted manifolds a comparison of Markvorsen and the second author~\cite{MP5}. In the case $\text{Sec}(\sigma_p)\leq-\frac{w''(r)}{w(r)}$ the fact that $|\nabla_Pr|\leq 1$ on $P-\{o\}$ leads to the estimate
\[
\frac{\Vol_h(D_R)}{\Vol_h(\ptl D_R)}\leq\frac{\Vol_f(B^w_R)}{\Vol_f(\ptl B^w_R)}.
\]
This generalizes to a weighted context a result of the second author~\cite{Pa} for minimal submanifolds of Cartan-Hadamard manifolds.
\end{remark}

Next, we provide some criteria for the $h$-parabolicity or $h$-hyperbolicity of non-compact submanifolds properly immersed in a weighted manifold with bounded radial sectional curvatures.

Our first result is an extension to the weighted setting of previous theorems for Riemannian manifolds by Esteve and the second author~\cite{esteve-palmer}, and by Markvorsen and the second author~\cite{MP3}. We note that the particular situation of rotationally symmetric manifolds with weights was analyzed by the authors in Theorems 3.2 and 3.3 of \cite{HPR1}.

\begin{theorem}
\label{parabolicity-sub-h''}
Let $(M^m,g,e^h)$ be a weighted manifold, $P^n$ a non-compact submanifold properly immersed in $M$, $r:M\rightarrow [0,\infty[$ the distance function from a pole $o\in M$, and $w(s)$ a smooth function such that $w(0)=0$, $w'(0)=1$
and $w(s)>0$ for all $s>0$. Suppose that the following conditions are fulfilled:
\begin{itemize}
\item[(i)] For any $p\in M-\{o\}$ and any plane $\sigma_p\subseteq T_pM$ containing $(\nabla r)_p$, we have
\begin{displaymath}
{\rm Sec}(\sigma_p) \geq (\leq) -\frac{w''(r)}{w(r)}.
\end{displaymath}
\item[(ii)] There exist $\rho>0$ and continuous functions $\psi(s),\varphi(s)$, such that $\ptl D_\rho$ is smooth and
\begin{displaymath}
\langle \nabla h,\nabla r\rangle\leq (\geq)\,\psi(r), \quad \langle \ovh^h,\nabla r\rangle \leq (\geq)\,  \varphi(r) \ \text{ on }P-D_\rho.
\end{displaymath}
\item[(iii)] In $P-D_\rho$ the bounding functions verify
\begin{displaymath}
\psi(r)+\varphi(r)\leq (\geq)\, -n\,\frac{w'(r)}{w(r)}\quad {\textrm {$($balance condition$)$}}.
\end{displaymath}
\end{itemize}
Then, we obtain
\begin{equation*}
\C^h_P(D_\rho)\leq (\geq)\,\,\frac{\C^f(B^w_\rho)}{\Vol_f(\ptl B^w_\rho)}\,\int_{\ptl D_R}|\nabla_Pr|\,da_h,
\end{equation*}
where $\C^f(B^w_\rho)$ denotes the weighted capacity of the metric ball $B^w_\rho$ in a weighted $(w,f)$-model space $(M^n_w,g_w,e^{f(r)})$ with
$f(r):=\int_\rho^r (\psi(s)+\varphi(s))\,ds$ for any $r\geq\rho$.

Moreover, if
\begin{equation}
\label{cond-para-sub-h''}
\int_\rho^\infty w^{1-n}(s)\, e^{-f(s)}\,ds=(<)\,\infty,
\end{equation}
then $P$ is $h$-parabolic $($$h$-hyperbolic$)$.
\end{theorem}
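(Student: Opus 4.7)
The plan is to follow the scheme of the intrinsic capacity comparisons (Theorems~\ref{comp:inftycapacity}, \ref{parabolicity-h''}, and \ref{hyperbolicity-h''}), replacing the intrinsic Laplacian estimate with the extrinsic one from Theorem~\ref{Laplacian-sub-infty}. For a second radius $R>\rho$ (chosen so that $\overline{D}_\rho\sub D_R$), I would transplant the $f$-capacity potential of the model annulus to $P$: letting $\phi_{\rho,R,f}$ be the function given by Theorem~\ref{capacityweighted} for the $n$-dimensional $(w,f)$-model space $(M^n_w,g_w,e^{f(r)})$ with $f(r):=\int_\rho^r(\psi(s)+\varphi(s))\,ds$, I define $v:=\phi_{\rho,R,f}\circ r$ on the extrinsic annulus $A^P_{\rho,R}:=D_R-\overline{D}_\rho$. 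By construction $v=1$ on $\ptl D_\rho$, $v=0$ on $\ptl D_R$, and $\phi'_{\rho,R,f}\leq 0$, so Theorem~\ref{Laplacian-sub-infty} is applicable.

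The first key step is to show that $v$ is $h$-subharmonic (resp. $h$-superharmonic) on $A^P_{\rho,R}$. Applying Theorem~\ref{Laplacian-sub-infty} with $F=\phi_{\rho,R,f}$ and using the bounds on $\escpr{\nabla h,\nabla r}$ and $\escpr{\ovh^h,\nabla r}$ from hypothesis (ii) together with $F'\leq 0$, I obtain
\[
\Delta^h_P v\geq(\leq)\,\Big(\phi''_{\rho,R,f}(r)-\phi'_{\rho,R,f}(r)\,\tfrac{w'(r)}{w(r)}\Big)|\nabla_Pr|^2 +\phi'_{\rho,R,f}(r)\Big(n\tfrac{w'(r)}{w(r)}+\psi(r)+\varphi(r)\Big).
\]
The ODE \eqref{eq:sabika} (in dimension $n$ with weight $f$, so that $f'=\psi+\varphi$) gives $\phi''_{\rho,R,f}(r)=-\phi'_{\rho,R,f}(r)\big((n-1)\tfrac{w'}{w}+\psi+\varphi\big)$, which turns the coefficient of $|\nabla_Pr|^2$ into $-\phi'_{\rho,R,f}(r)\big(n\tfrac{w'}{w}+\psi+\varphi\big)$ and allows the whole right hand side to be factored as
\[
\Delta^h_P v\geq(\leq)\,\phi'_{\rho,R,f}(r)\Big(n\tfrac{w'(r)}{w(r)}+\psi(r)+\varphi(r)\Big)\big(1-|\nabla_Pr|^2\big).
\]
Since $1-|\nabla_Pr|^2\geq 0$, $\phi'_{\rho,R,f}\leq 0$, and the balance condition~(iii) gives $n\tfrac{w'}{w}+\psi+\varphi\leq 0$ (resp. $\geq 0$), the product has the correct sign.

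The second step is a Green-type identity. Let $u$ be the $h$-capacity potential of $(\overline{D}_\rho,D_R)$ in $P$. Applying the weighted divergence theorem on $A^P_{\rho,R}$ to $u\,\nabla_Pv-v\,\nabla_Pu$, and using $\Delta^h_Pu=0$, the boundary values $u=v=1$ on $\ptl D_\rho$, $u=v=0$ on $\ptl D_R$, and $\nabla_Pv=\phi'_{\rho,R,f}(\rho)\,\nabla_Pr$ on $\ptl D_\rho$, I obtain
\[
\C^h_P(D_\rho,D_R)=|\phi'_{\rho,R,f}(\rho)|\int_{\ptl D_\rho}|\nabla_Pr|\,da_h-\int_{A^P_{\rho,R}}u\,\Delta^h_P v\,dv_h.
\]
Since $u\geq 0$ by the maximum principle, the sign of the correction term is controlled by the sign of $\Delta^h_P v$ established above, and \eqref{fcapacitymodel} identifies $|\phi'_{\rho,R,f}(\rho)|$ with $\C^f(B^w_\rho,B^w_R)/\Vol_f(\ptl B^w_\rho)$, yielding the desired estimate for $\C^h_P(D_\rho,D_R)$ in either case.

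Finally, since $r$ is proper on $P$ and $P$ is connected, the nested family $\{D_{R_i}\}$ for $R_i\to\infty$ exhausts $P$; by \eqref{eq:sensible}, $\C^h_P(D_\rho,D_{R_i})\to\C^h_P(D_\rho)$ and, in the model, $\C^f(B^w_\rho,B^w_{R_i})\to\C^f(B^w_\rho)$. Passing to the limit yields the stated inequality. The parabolicity/hyperbolicity assertion then follows from Theorem~\ref{theorGrig} combined with Remark~\ref{Alhfors}: condition \eqref{cond-para-sub-h''} with divergent (resp. convergent) integral is equivalent to $\C^f(B^w_\rho)=0$ (resp. $>0$), and the capacity comparison transfers this to $\C^h_P(D_\rho)$. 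The main obstacle I expect is the algebraic factorization that forces the sign of $\Delta^h_P v$ — in particular, the precise way the ODE for $\phi_{\rho,R,f}$ cancels against the balance condition (iii); a secondary subtlety is handling the approximation near $\ptl D_\rho$ when it is not smooth, which can be resolved by a Sard-type perturbation or by the cutoff argument used in the proof of Theorem~\ref{comp:inftycapacity}.
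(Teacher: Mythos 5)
Your proposal is correct and follows the paper's proof in all essential respects: the transplantation of the model potential $\phi_{\rho,R,f}$ to the extrinsic annulus, the use of Theorem~\ref{Laplacian-sub-infty} combined with \eqref{eq:sabika} and the balance condition to fix the sign of $\Delta^h_P v$ (your explicit factorization $\phi'\,(n\tfrac{w'}{w}+\psi+\varphi)(1-|\nabla_Pr|^2)$ is exactly the cancellation the paper performs, just written in one line), the Sard-type choice of $\rho$ and $R$, and the limit $R\to\infty$ via \eqref{eq:sensible}. The one step where you genuinely deviate is the comparison between $v$ and the $h$-capacity potential $u$ of $(\overline{D}_\rho,D_R)$: the paper applies the maximum principle and the Hopf boundary point lemma (Theorem~\ref{th:mp}) to $v-u$ to obtain the \emph{pointwise} inequality $|\nabla_Pu|\leq(\geq)\,|\nabla_Pv|$ along $\ptl D_\rho$ and then integrates, whereas you apply the weighted divergence theorem to $u\,\nabla_Pv-v\,\nabla_Pu$ and control the error term $\int u\,\Delta^h_Pv\,dv_h$ by its sign. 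Your Green-identity route is precisely the mechanism the paper itself uses in the intrinsic Theorem~\ref{hyperbolicity-h''}, so it buys nothing new here, but it is equally valid and avoids invoking the Hopf lemma; it only requires $u\geq0$ and enough boundary regularity for the divergence theorem, both of which hold once $\ptl D_\rho$ and $\ptl D_R$ are smooth. (Minor point in your favour: your final display correctly carries $\int_{\ptl D_\rho}|\nabla_Pr|\,da_h$, consistent with the proof, rather than the $\ptl D_R$ appearing in the theorem's statement, which is evidently a typo.)
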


\begin{proof}
By using Sard's Theorem we can suppose that $\nabla_P r\neq 0$ along $\ptl D_\rho$. Take any number $R>\rho$ such that $\ptl D_R$ is smooth. Let us consider the extrinsic annulus $A^P_{\rho,R}:=D_R-\overline{D}_\rho$ and the function $\phi_{\rho,R,f}:[\rho,R]\to\rr$ defined in \eqref{fsolmodel}, i.e., the $f$-capacity potential of $(\overline{B}^w_\rho,B^w_R)$ in the $n$-dimensional weighted $(w,f)$-model space $(M^n_w,g_w,e^{f(r)})$. This function is the solution to the problem \eqref{eqfDirModel2}; in particular, it satisfies \eqref{eq:sabika} by replacing $m$ with $n$. The composition $v:=\phi_{\rho,R,f} \circ r$ defines a smooth function in $A^P_{\rho,R}$.

Since $\phi_{\rho,R,f}'(r) \leq 0$, by using Theorem~\ref{Laplacian-sub-infty} and the boundedness assumptions (ii) in the statement, we get this comparison in $A^P_{\rho,R}$
\begin{displaymath}
\Delta^h_P\,v\geq (\leq)\,\left(\phi_{\rho,R,f}''(r)-\phi_{\rho,R,f}'(r)\,\frac{w'(r)}{w(r)}\right)|\nabla_P r|^2 + \phi_{\rho,R,f}'(r)\,\left(\frac{n\,w'(r)}{w(r)} +\varphi(r)+\psi(r)\right).
\end{displaymath}
On the other hand, by taking into account \eqref{eq:sabika} and the balance condition in (iii), it follows that
\[
\phi_{\rho,R,f}''(r)-\phi_{\rho,R,f}'(r)\,\frac{w'(r)}{w(r)}=-\phi_{\rho,R,f}'(r)\,\bigg(n\,\frac{w'(r)}{w(r)}+\psi(r)+\varphi(r)\bigg)\leq (\geq)\,\, 0
\]
in $A^P_{\rho,R}$. As $|\nabla_P r|\leq 1$, we conclude that
\begin{displaymath}
\Delta^h_P\,v\geq (\leq)\,\, \phi_{\rho,R,f}''(r)+\phi_{\rho,R,f}'(r)\,\left((n-1)\,\frac{w'(r)}{w(r)}+\varphi(r)+\psi(r)\right)=0
=\Delta^h_P\,u,
\end{displaymath}
where $u$ is the $h$-capacity potential of the capacitor $(\overline{D}_\rho,D_R)$ in $P$. Since $u=v$ on $\partial A^P_{\rho,R}$, by applying the maximum principle and the Hopf boundary point lemma in Theorem~\ref{th:mp}, we deduce that $\frac{\ptl u}{\ptl\nu}< (>)\,\frac{\ptl v}{\ptl\nu}$ on $\ptl D_\rho$, where $\nu$ is the outer unit normal along $\ptl A^P_{\rho,R}$, which coincides with the unit normal $\frac{\nabla_Pu}{|\nabla_Pu|}=\frac{\nabla_P v}{|\nabla_Pv|}$ along $\ptl D_\rho$ pointing into $D_\rho$. From \eqref{eq:capint}, we obtain
\begin{align*}
\C^h_P(D_\rho,D_R)&=\int_{\partial D_\rho} |\nabla_Pu|\,da_h\leq (\geq) \int_{\partial D_\rho} |\nabla_P v|\,da_h
\\
&=|\phi'_{\rho,R,f}(\rho)|\int_{\partial D_\rho} |\nabla_P r|\,da_h 
\\
&=\frac{\C^f(B^w_\rho,B^w_R)}{\Vol_f(\ptl B^w_\rho)}\int_{\partial D_\rho} |\nabla_P r|\,da_h .
\end{align*}
Hence, the desired comparison comes from above by taking limits when $R\to\infty$.
Moreover, if \eqref{cond-para-sub-h''} holds, then $\C^f(B^w_\rho)=\C_P^h(D_\rho)=0$ (resp. $\C^f(B^w_\rho)>0$ and $\C^h_P(D_R)>0$), so that $P$ is $h$-parabolic (resp. $h$-hyperbolic) by Theorem~\ref{theorGrig}.
\end{proof}

\begin{remark}
Under the hypotheses corresponding to the case $\text{Sec}(\sigma_p)\geq-\frac{w''(r)}{w(r)}$ we can deduce that
\[
\frac{\C^h_P(D_\rho)}{{\rm Vol}_h(\partial D_\rho)} \leq \frac{\C^f(B^w_\rho)}{\Vol_f(\ptl B^w_\rho)}.
\]
\end{remark}

Next, we will deduce some consequences of the previous result for submanifolds with bounded weighted mean curvature.

\begin{corollary}
\label{corext1}
Let $(M^m,g, e^h)$ be a weighted manifold, $P^n$ a non-compact submanifold properly immersed in $M$, $r:M\rightarrow [0,\infty[$ the distance function from a pole $o\in M$, and $w(s)$ a smooth function such that $w(0)=0$, $w'(0)=1$ and $w(s)>0$ for all $s>0$. Suppose that the following conditions are fulfilled:
\begin{itemize}
\item[(i)] The function $w$ satisfies $\int_0^\infty w(s)\,ds=\infty$ $($resp. $\int_0^\infty w(s)\,ds<\infty$$)$ and $\frac{w'(s)}{w(s)}$ is bounded at infinity.
\item[(ii)] For any $p\in M-\{o\}$ and any plane $\sigma_p\subseteq T_pM$ containing $(\nabla r)_p$, we have
\begin{displaymath}
{\rm Sec}(\sigma_p) \geq (\leq) -\frac{w''(r)}{w(r)}.
\end{displaymath}
\item[(iii)] There exist $\rho>0$ and a continuous function $\psi(s)$ with $\psi(s) \rightarrow -\infty$ $($resp. $\psi(s)\rightarrow\infty$$)$ when $s\to\infty$, such that $\ptl D_\rho$ is smooth and
\begin{displaymath}
\langle \nabla h,\nabla r\rangle\leq (\geq)\,\psi(r) \ \text{ on }P-D_\rho. 
\end{displaymath}
\end{itemize}

In these conditions, if $P$ has bounded $h$-mean curvature, then $P$ is $h$-parabolic $($resp. $h$-hyperbolic$)$.
\end{corollary}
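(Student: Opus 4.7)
The plan is to deduce the result from Theorem~\ref{parabolicity-sub-h''} by making a concrete choice of the bounding function $\varphi$ from the bounded $h$-mean curvature of $P$, and then checking that both the balance condition and the integral hypothesis \eqref{cond-para-sub-h''} of that theorem are satisfied on a sufficiently large extrinsic annulus. I shall detail the parabolic case (lower bound on $\text{Sec}$, $\psi\to-\infty$, and $\int_0^\infty w(s)\,ds=\infty$); the hyperbolic case is entirely parallel with the inequalities reversed.

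Since $|\ovh^h|\leq c$ on $P$ for some $c\geq 0$, we have $\langle \ovh^h,\nabla r\rangle\leq c$ on $P-\{o\}$, so I take $\varphi(r):=c$, which together with hypothesis (iii) provides the bounds on $\langle\nabla h,\nabla r\rangle$ and $\langle\ovh^h,\nabla r\rangle$ required in item (ii) of Theorem~\ref{parabolicity-sub-h''}. By hypothesis (i), there exists $C\geq 0$ such that $|w'(s)/w(s)|\leq C$ for $s$ large, while $\psi(s)\to-\infty$ yields some $s_0$ with $\psi(s)+c\leq-nC$ for $s\geq s_0$. Using Sard's theorem I enlarge $\rho$ so that $\ptl D_\rho$ remains smooth and $\rho\geq s_0$; then
\[
\psi(r)+\varphi(r)\;\leq\;-nC\;\leq\;-n\,\frac{w'(r)}{w(r)}\quad\text{on }P-D_\rho,
\]
verifying the balance condition.

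Next I set $f(r):=\int_\rho^r(\psi(s)+c)\,ds$. From $\psi\to-\infty$, for every $N>0$ there exist $s_N\geq\rho$ and $A_N>0$ with $e^{-f(s)}\geq A_N\,e^{N(s-\rho)}$ for $s\geq s_N$. Meanwhile, integrating the bound $|(\log w)'|\leq C$ gives $w^{1-n}(s)\geq B\,e^{-(n-1)C(s-\rho)}$ for some $B>0$ and all $s$ large. Choosing $N>(n-1)C$ makes the integrand $w^{1-n}(s)\,e^{-f(s)}$ grow exponentially, so $\int_\rho^\infty w^{1-n}(s)\,e^{-f(s)}\,ds=\infty$, and Theorem~\ref{parabolicity-sub-h''} yields the $h$-parabolicity of $P$. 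The hyperbolic case follows by the same argument with $\varphi:=-c$, reversed inequalities in the balance condition (valid for large $r$ because $\psi\to\infty$) and $e^{-f}$ decaying faster than any exponential, which forces $\int_\rho^\infty w^{1-n}(s)\,e^{-f(s)}\,ds<\infty$. The only delicate step is this asymptotic analysis, where the super-exponential behavior of $e^{-f}$ driven by $\psi\to\pm\infty$ must be shown to dominate the at-most-exponential rate of $w^{1-n}$ permitted by the bound on $w'/w$; everything else is routine bookkeeping.
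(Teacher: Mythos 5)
Your proof is correct, and its first half coincides with the paper's own argument: both take $\varphi:=c$ (resp. $\varphi:=-c$) from the Cauchy--Schwarz bound $\escpr{\ovh^h,\nabla r}\leq c$ (resp. $\geq -c$), and both verify the balance condition of Theorem~\ref{parabolicity-sub-h''} by enlarging $\rho$, using that $w'/w$ is bounded at infinity while $\psi\to\mp\infty$. Where you genuinely diverge is in checking the integral condition \eqref{cond-para-sub-h''}. The paper integrates the balance inequality itself, obtaining $f(s)\leq(\geq)\,-n\ln\big(w(s)/w(\rho)\big)$ and hence $w^{1-n}(s)\,e^{-f(s)}\geq(\leq)\,w(s)/w^{n}(\rho)$, and then invokes the hypothesis $\int_0^\infty w(s)\,ds=\infty$ (resp. $<\infty$). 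You instead compare growth rates directly: since $\psi\to\mp\infty$, the factor $e^{-f}$ grows (resp. decays) faster than any exponential, while $|(\ln w)'|\leq C$ at infinity confines $w^{1-n}$ between exponentials of rate $(n-1)C$, so choosing $N>(n-1)C$ settles the convergence or divergence. Both routes are valid; yours is a bit more computational, but it has the notable side effect of never using the assumption on $\int_0^\infty w(s)\,ds$, so it in fact proves the corollary under the weaker form of hypothesis (i) in which only the boundedness of $w'/w$ at infinity is retained. The paper's version is shorter and explains why the integral hypothesis on $w$ is the natural one to display (it is the Ahlfors-type condition for the unweighted $n$-dimensional model), whereas your estimate shows that, once $\psi$ diverges, that hypothesis is actually redundant.
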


\begin{proof} 
We check the hypotheses in Theorem~\ref{parabolicity-sub-h''}.
Choose $c>0$ such that $|\ovh^h|\leq c$ on $P$. The Cauchy-Schwarz inequality implies that $\escpr{\ovh^h,\nabla r}\leq c$ (resp. $\escpr{\ovh^h,\nabla r}\geq -c$) on $P-\{o\}$. On the other hand, since $\frac{w'(s)}{w(s)}$ is bounded at infinity and $\psi(s) \to-\infty$ (resp. $\psi(s)\to\infty$) when $s\to\infty$, by changing $\rho$ if necessary, we can suppose that
\begin{equation*}
n\,\frac{w'(r)}{w(r)}+\psi(r)+c\leq 0 \ \  \text{\bigg(resp. } n\,\frac{w'(r)}{w(r)}+\psi(r)-c\geq 0 \text{\bigg)} \ \text{ on } P-D_\rho,
\end{equation*}
so that the balance condition is satisfied. Consider the function $f(s):=\int_\rho^s (\psi(t)+c)\,dt$ (resp. $f(s):=\int_\rho^s (\psi(t)-c)\,dt$).
By integrating the inequality above, we obtain
\[
f(s)\leq (\geq)-n\,\int_\rho^s\frac{w'(t)}{w(t)}\,dt=-n\,\ln\bigg(\frac{w(s)}{w(\rho)}\bigg),
\]
and so
\[
e^{-f(s)}\geq (\leq)\,\,\frac{w^n(s)}{w^n(\rho)}, \ \text{ for any }s\geq\rho.
\]
From here we have
\[
\int_\rho^\infty w^{1-n}(s)\,e^{-f(s)}\,ds\geq (\leq)\,\,\frac{1}{w^n(\rho)}\,\int_\rho^\infty w(s)\,ds,
\]
so that the condition in \eqref{cond-para-sub-h''} holds since $\int_0^\infty w(s)\,ds=\infty$ (resp. $\int_0^\infty w(s)\,ds<\infty$).
We conclude that $P$ is $h$-parabolic (resp. $h$-hyperbolic).
\end{proof}

Also as a consequence of Theorem \ref{parabolicity-sub-h''} we can extend to a weighted setting a result of S. Markvorsen and the second author \cite{MP2} ensuring that, in a Cartan-Hadamard manifold with sectional curvatures bounded from above by $b\leq 0$, the $n$-dimensional complete minimal and properly immersed submanifolds are hyperbolic if either $b<0$ and $n\geq 2$, or $b=0$ and $n \geq 3$. This statement is the particular case $h=0$ of the next corollary.

\begin{corollary} 
\label{hiperbolicity-MP} 
Let $(M^m,g)$ be a Cartan-Hadamard manifold, i.e., a complete and simply connected Riemannian manifold such that
\[
{\rm Sec} (\sigma_p) \leq b \leq 0,
\] 
for any plane $\sigma_p\subseteq T_pM$ and any point $p\in M$. Denote by $r:M\to[0,\infty[$ the distance function from a fixed point $o\in M$. Given a weight $e^h$ in $M$, suppose that there exist $n\in\mathbb{N}$ with $n\geq 2$, and constants $\rho,\epsilon>0$, such that
\[
\begin{cases}
\escpr{\nabla h,\nabla r}\geq-\frac{n-2-\epsilon}{r} \ \text{ in } M-B_\rho \ \text{ if } b=0, 
\\
\escpr{\nabla h,\nabla r}\geq-(n-1-\epsilon)\,\sqrt{-b}\,\coth(\sqrt{-b}\,r) \ \text{ in } M-B_\rho \ \text{ if } b<0.
\end{cases}
\]
Then, any non-compact $h$-minimal submanifold $P^n$ properly immersed in $M$ is $h$-hyperbolic.
\end{corollary}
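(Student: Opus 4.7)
The plan is to apply the $h$-hyperbolicity branch (the $\leq/\geq$ choice) of Theorem~\ref{parabolicity-sub-h''} with the warping function $w_b$ from Example~\ref{propSpaceForm}: take $w(s)=s$ if $b=0$ and $w(s)=\frac{1}{\sqrt{-b}}\sinh(\sqrt{-b}\,s)$ if $b<0$. The Cartan-Hadamard bound $\mathrm{Sec}\leq b$ is precisely condition (i) in the form $\mathrm{Sec}(\sigma_p)\leq -w''(r)/w(r)$, and the $h$-minimality of $P$ makes $\ovh^h\equiv 0$ on $P$, so condition (ii) reduces to taking $\varphi\equiv 0$ together with the prescribed radial lower bound on $\langle\nabla h,\nabla r\rangle$ as $\psi(r)$, valid outside a smooth-boundary extrinsic ball $D_\rho$ (enlarging $\rho$ via Sard if necessary).

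Next I would verify the balance condition $\psi(r)+\varphi(r)\geq-n\,w'(r)/w(r)$. For $b=0$ the right-hand side equals $-n/r$ and the hypothesis gives $\psi(r)=-(n-2-\epsilon)/r\geq -n/r$, which is trivial. For $b<0$ the right-hand side equals $-n\sqrt{-b}\coth(\sqrt{-b}\,r)$ and $\psi(r)=-(n-1-\epsilon)\sqrt{-b}\coth(\sqrt{-b}\,r)$ trivially clears it. Hence all the hypotheses of Theorem~\ref{parabolicity-sub-h''} are in place, and it remains to check the finite-integrability criterion \eqref{cond-para-sub-h''} for $f(s):=\int_\rho^s\psi(t)\,dt$, which is the only substantive step.

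When $b=0$ I would integrate to obtain $f(s)=-(n-2-\epsilon)\log(s/\rho)$, whence
\[
w^{1-n}(s)\,e^{-f(s)}=\rho^{\,\epsilon+2-n}\,s^{-1-\epsilon},
\]
which is integrable on $[\rho,\infty)$. When $b<0$ I would use $\int\coth(\sqrt{-b}\,t)\,dt=\frac{1}{\sqrt{-b}}\log\sinh(\sqrt{-b}\,t)$ to obtain $e^{-f(s)}=(\sinh(\sqrt{-b}\,s)/\sinh(\sqrt{-b}\,\rho))^{n-1-\epsilon}$, so that the factors of $\sinh$ in $w^{1-n}(s)e^{-f(s)}$ collapse to $\sinh(\sqrt{-b}\,s)^{-\epsilon}$, which decays exponentially. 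In both cases the integral is finite and Theorem~\ref{parabolicity-sub-h''} yields the $h$-hyperbolicity of $P$.

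No step is conceptually hard; the main care required is bookkeeping the cancellation between the $w^{1-n}$ factor and $e^{-f}$, which is precisely what forces the numerical values $n-2-\epsilon$ (flat case) and $n-1-\epsilon$ (negative case) in the hypothesis: they are the critical thresholds at which the decay is just fast enough to give convergence, with the uniform $\epsilon>0$ margin furnishing the needed polynomial or exponential tail.
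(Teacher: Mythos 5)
Your proposal is correct and follows essentially the same route as the paper's proof: the same choice of $w$, $\psi$, $\varphi\equiv 0$, the same trivial verification of the balance condition, and the same computation showing that $w^{1-n}e^{-f}$ reduces to $s^{-1-\epsilon}$ (resp. a constant times $\sinh(\sqrt{-b}\,s)^{-\epsilon}$), giving the finite integral in \eqref{cond-para-sub-h''} and hence $h$-hyperbolicity via Theorem~\ref{parabolicity-sub-h''}.
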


\begin{proof} 
For a submanifold $P$ in the conditions of the statement we check that the hypotheses in Theorem~\ref{parabolicity-sub-h''} are satisfied. 

In case $b=0$ we consider the functions $w,\psi,\varphi:[0,\infty[\to\rr$ defined by $w(s):=s$, $\psi(s):=-\frac{n-2-\epsilon}{s}$ and $\varphi(s):=0$. Observe that
\[
\psi(r)+\varphi(r)+n\,\frac{w'(r)}{w(r)}=\frac{\epsilon+2}{r}>0,
\]
so that the balance condition holds. On the other hand, a straightforward computation shows that
\[
f(s):=\int_\rho^s\psi(t)\,dt=-(n-2-\epsilon)\,\ln\bigg(\frac{s}{\rho}\bigg),
\]
and so
\[
\int_\rho^\infty w^{1-n}(s)\,e^{-f(s)}\,ds=\frac{1}{\rho^{n-2-\eps}}\,\int_{\rho}^\infty s^{-(\eps+1)}\,ds<\infty,
\]
which is the condition in \eqref{cond-para-sub-h''}. From here we conclude that $P$ is $h$-hyperbolic. 

In case $b<0$ we reason in a similar way with the functions $w,\psi,\varphi:[0,\infty[\to\rr$ given by $w(s):=\frac{1}{\sqrt{-b}}\sinh(\sqrt{-b}\, s)$, $\psi(s):=-(n-1-\epsilon)\,\sqrt{-b}\,\coth(\sqrt{-b}\,s)$ and $\varphi(s):=0$.
\end{proof}

\begin{remark}
In Theorem~\ref{Laplacian-sub-infty} and the results of this subsection we assume bounds on the Riemannian sectional curvatures. As we noted in Remark~\ref{re:papa}, these hypotheses hold when we assume the same bounds on $\text{Sec}^h_\infty$ under an additional condition for the sign of $(\text{Hess}\,h)(\nabla r,\nabla r)$.
\end{remark}

\begin{remark}
As we pointed out in Remark~\ref{re:salinas}, the condition in \eqref{cond-para-sub-h''} is equivalent by Remark~\ref{Alhfors} to the weighted parabolicity (resp. hyperbolicity) of the corresponding $n$-dimensional weighted comparison model. Hence, Theorem~\ref{parabolicity-sub-h''} shows that the submanifold $P$ is $h$-parabolic (resp. $h$-hyperbolic) provided the weighted $(w,f)$-model space is $f$-parabolic (resp. $f$-hyperbolic).
\end{remark}

\begin{remark}[Comparisons under a lower bound on $\text{Sec}^h_\infty$]
By following the proofs of Theorems~\ref{th:simpson} and \ref{parabolicity-sub-h''} it is possible to derive a volume comparison and a parabolicity criterion for a weighted manifold $(M^m,g,e^h)$, where $h$ is a radial non-decreasing weight satisfying that
\[
\text{Sec}^h_\infty(\sigma_p)\geq-\frac{w''(r)}{w(r)}.
\]
The starting point for these comparisons is the inequality \eqref{hessianineqinfty} in Theorem~\ref{th:hessianineqr}, from which the same estimate for $\Delta^h_P(F\circ r)$ as in Theorem~\ref{Laplacian-sub-infty} can be deduced. The details are left to the reader.
\end{remark}

\subsection{Weighted parabolicity under a lower bound on the $q$-weighted sectional curvatures.}
\label{extsect2}\

We finally show a parabolicity criterion by assuming a lower bound on some $q$-weighted sectional curvature. The key ingredients to prove it are the comparison in Theorem~\ref{laplace-ineq-q} for the weighted Laplacian and the use, as in Theorem~\ref{comp:capacity}, of a second order operator over radial functions that coincides with the weighted Laplacian in some weighted model space when $q\in\mathbb{N}$.

\begin{theorem} 
\label{parabolicity-sub-gen-q}
Let $(M^m,g, e^h)$ be a weighted manifold, $P^n$ a non-compact submanifold properly immersed in $M$, $r:M\to[0,\infty[$ the distance function from a pole $o\in M$, and $w(s)$ a smooth function such that $w(0)=0$, $w'(0)=1$ and $w(s)>0$ for all $s>0$. Suppose that the following conditions are fulfilled:
\begin{itemize}
\item[(i)] There is $q>0$ such that, for any $p\in M-\{o\}$ and any plane $\sigma_p\subseteq T_pM$ containing $(\nabla r)_p$, we have
\begin{displaymath}
{\rm Sec}^{h}_q(\sigma_p)\geq-\,\frac{m+q-1}{m-1}\,\frac{w''(r)}{w(r)}.\end{displaymath}
\item[(ii)] There exist $\rho>0$ and continuous functions $\psi(s),\varphi(s)$, such that $\ptl D_\rho$ is smooth and
\begin{displaymath}
\langle \nabla h,\nabla r\rangle\leq\psi(r), \quad \langle \ovh^h,\nabla r\rangle \leq  \varphi(r) \ \text{ on }P-D_\rho.
\end{displaymath}
\item[(iii)] In $P-D_\rho$ the bounding functions verify
\begin{displaymath}
\frac{m-n-1}{m-1}\,\psi(r)+\varphi(r)\leq -n\,\,\frac{m+q-1}{m-1}\,\frac{w'(r)}{w(r)}\quad {\textrm {$($balance condition$)$}}.
\end{displaymath}
\end{itemize}
Then, we obtain 
\begin{equation*}
\frac{\C^h_P(D_\rho)}{{\rm Vol}_h(\partial D_\rho)} \leq
\frac{w^{(1-n)\,\frac{m+q-1}{m-1}}(\rho)\,
e^{-f(\rho)}}{\int_\rho^{\infty}w^{(1-n)\,\frac{m+q-1}{m-1}}(s)\,
e^{-f(s)}\,ds},
\end{equation*}
where
$f(r):=\int_\rho^r\left(\frac{m-n}{m-1}\,\psi(s)+\varphi(s)\right)ds$ for any $r\geq\rho$.

Moreover, if
\begin{equation}
\label{cond-para-sub-q}
\int_\rho^{\infty}w^{(1-n)\,\frac{m+q-1}{m-1}}(s)\,e^{-f(s)}\,ds=\infty,
\end{equation}
then $P$ is $h$-parabolic.
\end{theorem}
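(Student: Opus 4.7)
The plan is to mimic the strategy of Theorems~\ref{comp:capacity} and \ref{parabolicity-sub-h''}: we transplant to the extrinsic annulus the solution of a suitable ODE, apply the maximum principle to compare it with the $h$-capacity potential on $P$, and extract the capacity bound via the Hopf boundary point lemma.

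I would first introduce the second order differential operator acting on any $C^2$ function $F:[\rho,\infty[\to\rr$ by
$$L(F(s)):=F''(s)+F'(s)\left((n-1)\,\frac{m+q-1}{m-1}\,\frac{w'(s)}{w(s)}+\frac{m-n}{m-1}\,\psi(s)+\varphi(s)\right),$$
and note, as in Theorem~\ref{capacityweighted}, that the decreasing radial function
$$\phi^L_{\rho,R}(r):=\left(\int_r^R w^{(1-n)\frac{m+q-1}{m-1}}(s)\,e^{-f(s)}\,ds\right)\left(\int_\rho^R w^{(1-n)\frac{m+q-1}{m-1}}(s)\,e^{-f(s)}\,ds\right)^{-1}$$
solves $L(\phi^L_{\rho,R})=0$ on $[\rho,R]$ with $\phi^L_{\rho,R}(\rho)=1$ and $\phi^L_{\rho,R}(R)=0$; when $q\in\mathbb{N}$ this operator coincides, on radial functions, with a suitable weighted Laplacian in some model space.

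Next, for $R>\rho$ with $\ptl D_R$ smooth (which holds for almost every $R$ by Sard's theorem), set $v:=\phi^L_{\rho,R}\circ r$ on the extrinsic annulus $A^P_{\rho,R}$. Since $(\phi^L_{\rho,R})'\leq 0$, Theorem~\ref{laplace-ineq-q} provides a lower bound for $\Delta^h_P v$ split into a $|\nabla_P r|^2$--term and a $|\nabla_P r|^2$--free term. The main technical step, which I expect to be the key obstacle, is to show that this lower bound reduces to $\Delta^h_P v\geq 0$. To do so, I would first apply the pointwise bounds $\escpr{\nabla h,\nabla r}\leq\psi(r)$ and $\escpr{\ovh^h,\nabla r}\leq\varphi(r)$ from hypothesis (ii); then, using the identity $L(\phi^L_{\rho,R})=0$ to eliminate $(\phi^L_{\rho,R})''(r)$, a routine rearrangement shows that the coefficient of $|\nabla_P r|^2$ becomes $-(\phi^L_{\rho,R})'(r)\,Y(r)$ while the summand independent of $|\nabla_P r|^2$ becomes $(\phi^L_{\rho,R})'(r)\,Y(r)$, where
$$Y(r):=n\,\frac{m+q-1}{m-1}\,\frac{w'(r)}{w(r)}+\frac{m-n-1}{m-1}\,\psi(r)+\varphi(r)\leq 0$$
by the balance condition (iii). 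Since the first coefficient is thus nonpositive and $|\nabla_P r|^2\leq 1$, the two contributions sum to at least $-(\phi^L_{\rho,R})'(r)\,Y(r)+(\phi^L_{\rho,R})'(r)\,Y(r)=0$, so $\Delta^h_P v\geq 0$ on $A^P_{\rho,R}$.

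The remaining steps follow the lines of Theorem~\ref{parabolicity-sub-h''}. Letting $u$ denote the $h$-capacity potential of $(\overline{D}_\rho,D_R)$ in $P$, we have $u=v$ on $\ptl A^P_{\rho,R}$ and $\Delta^h_P v\geq 0=\Delta^h_P u$, so Theorem~\ref{th:mp} yields $v\leq u$ in $A^P_{\rho,R}$ together with $|\nabla_P u|\leq|\nabla_P v|$ along $\ptl D_\rho$ via the Hopf lemma. Plugging into \eqref{eq:capint} and using $|\nabla_P r|\leq 1$, we obtain
$$\C^h_P(D_\rho,D_R)\leq|(\phi^L_{\rho,R})'(\rho)|\,\Vol_h(\ptl D_\rho)=\frac{w^{(1-n)\frac{m+q-1}{m-1}}(\rho)\,e^{-f(\rho)}}{\int_\rho^R w^{(1-n)\frac{m+q-1}{m-1}}(s)\,e^{-f(s)}\,ds}\,\Vol_h(\ptl D_\rho).$$
The stated capacity comparison then follows by letting $R\to\infty$ through \eqref{eq:sensible}, and under the integrability condition \eqref{cond-para-sub-q} the right hand side vanishes, forcing $\C^h_P(D_\rho)=0$; Theorem~\ref{theorGrig} finally gives that $P$ is $h$-parabolic.
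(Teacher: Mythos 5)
Your proposal is correct and follows essentially the same route as the paper: the same operator $L$ and transplanted solution $\phi^L_{\rho,R}\circ r$, the same use of Theorem~\ref{laplace-ineq-q} with the balance condition to reduce the two terms to $-(\phi^L_{\rho,R})'\,Y\,|\nabla_Pr|^2+(\phi^L_{\rho,R})'\,Y\geq 0$, and the same maximum-principle/Hopf-lemma comparison with the $h$-capacity potential followed by the limit $R\to\infty$. The only difference is cosmetic: the paper abbreviates the final steps by referring to the proof of Theorem~\ref{parabolicity-sub-h''}, which you spell out explicitly.
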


\begin{proof} 
We define a second order differential operator $L$ acting on smooth functions $F:[\rho,\infty[\to\rr$ by
\begin{equation*}
L(F(s)):=
F''(s)+F'(s)\left((n-1)\,\frac{m+q-1}{m-1}\,\frac{w'(s)}{w(s)}+\frac{m-n}{m-1}\,\psi(s)+\varphi(s)\right).
\end{equation*}
For any $R>\rho$, it is easy to see that the unique solution of equation $L(F(s))=0$ in $[\rho,R]$ with boundary conditions $F(\rho)=1$ and $F(R)=0$
is given by the function
\begin{equation*}
\phi^{L}_{\rho,R}(s):=\bigg(\int_s^R w^{(1-n)\,\frac{m+q-1}{m-1}}(t)\,
e^{-f(t)} dt\bigg)\,\bigg(\int_\rho^{R}w^{(1-n)\,\frac{m+q-1}{m-1}}(t)\,e^{-f(t)}
dt\bigg)^{-1}.
\end{equation*}

Now, we consider the radial function $v:=\phi^{L}_{\rho,R} \circ r$ defined in the extrinsic annulus $A^P_{\rho,R}:=D_R-\overline{D}_\rho$ of $P$.
Since $(\phi^{L}_{\rho,R})'(r) \leq 0$, by using Theorem
\ref{laplace-ineq-q} together with the boundedness assumptions in (ii), we get this inequality in $A^P_{\rho,R}$
\begin{equation*}
\begin{aligned}
\Delta_P^h\,v\geq &\left((\phi^{L}_{\rho,R})''(r)-\frac{(\phi^{L}_{\rho,R})'(r)}{m-1}\left((m+q-1)\,\frac{w'(r)}{w(r)}-\psi(r)\right)\right)|\nabla_Pr|^2
\\
&+(\phi^{L}_{\rho,R})'(r)\left(n\,\frac{m+q-1}{m-1}\,\frac{w'(r)}{w(r)}+\frac{m-n-1}{m-1}\,\psi(r)+\varphi(r)\right).
\end{aligned}
\end{equation*}
On the other hand, from equality $L(\phi^L_{\rho,R,f}(r))=0$ and the balance condition in (iii), it follows that 
\begin{align*}
(\phi^{L}_{\rho,R})''(r)&-\frac{(\phi^{L}_{\rho,R})'(r)}{m-1}
\left((m+q-1)\,\frac{w'(r)}{w(r)}-\psi(r)\right)
\\
&=(\phi^{L}_{\rho,R})'(r)\left(\frac{1-m-n}{m-1}\,\psi(r)-n\,\frac{m+q-1}{m-1}\,\frac{w'(r)}{w(r)}-\varphi(r)\right)\leq 0.
\end{align*}
Thus, since $|\nabla_P r|\leq 1$, we conclude that
\begin{displaymath}
\Delta^h_P\,v\geq L(\phi^{L}_{\rho,R}(r))=0=\Delta^h_P\,u,
\end{displaymath}
where $u$ is the $h$-capacity potential of the capacitor
$(\overline{D}_\rho,D_R)$ in $P$. From this point the claim follows with the same arguments as in the proof of Theorem \ref{parabolicity-sub-h''}. 
\end{proof}

As a consequence of Theorem~\ref{parabolicity-sub-gen-q} we deduce the following result for weighted minimal hypersurfaces.

\begin{corollary} 
Let $(M^m,g, e^h)$ be a weighted manifold, $r:M\to[0,\infty[$ the distance function from a pole $o\in M$, and $w(s)$ a smooth function such that $w(0)=0$, $w'(0)=1$ and $w(s)>0$ for all $s>0$. Suppose that the following conditions are fulfilled:
\begin{itemize}
\item[(i)] There exists $\rho>0$ such that $w:[\rho,\infty[\to\rr$ is a non-increasing function.
\item[(ii)] There is $q>0$ such that, for any $p\in M-\{o\}$ and any plane $\sigma_p\subseteq T_pM$ containing $(\nabla r)_p$, we have
\begin{displaymath}
{\rm Sec}^{h}_q(\sigma_p)\geq-\,\frac{m+q-1}{m-1}\,\frac{w''(r)}{w(r)}.
\end{displaymath}
\item[(iii)] $h$ is a non-positive radial function.
\end{itemize}
Then, any non-compact $h$-minimal hypersurface $P$ properly immersed in $M$ is $h$-parabolic.
\end{corollary}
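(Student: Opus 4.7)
The plan is to apply Theorem~\ref{parabolicity-sub-gen-q} with $n=m-1$ to the hypersurface $P$. Since $h$ is smooth and radial, I would write $h(p)=\tilde h(r(p))$ with $\tilde h\in C^\infty([0,\infty[)$ and use the identity $\langle\nabla h,\nabla r\rangle=\tilde h'(r)$ on $M-\{o\}$ to set $\psi(s):=\tilde h'(s)$. Because $P$ is $h$-minimal, $\ovh^h\equiv 0$, so $\varphi(s):=0$ serves as an upper bound for $\langle\ovh^h,\nabla r\rangle$. By Sard's Theorem I can pick $\rho'\geq\rho$ with $\partial D_{\rho'}$ smooth while $w'\leq 0$ still holds on $[\rho',\infty[$; after relabelling, hypotheses (i) and (ii) of Theorem~\ref{parabolicity-sub-gen-q} are in place.

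The second step is to verify the balance condition (iii) of that theorem. The key observation is that, for $n=m-1$, the coefficient $(m-n-1)/(m-1)$ vanishes, so the left-hand side of the balance collapses to $\varphi(r)=0$ and the inequality reduces to
\[
0\leq -(m+q-1)\,\frac{w'(r)}{w(r)},
\]
which holds on $[\rho,\infty[$ by the monotonicity assumption~(i). This codimension-one cancellation is what makes the argument work, because otherwise a non-trivial pointwise bound on $\tilde h'$ would be required.

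The last step is to establish divergence of the integral in~\eqref{cond-para-sub-q}. A direct computation gives
\[
f(r)=\int_\rho^r\frac{1}{m-1}\,\tilde h'(s)\,ds=\frac{\tilde h(r)-\tilde h(\rho)}{m-1},
\]
and since $\tilde h\leq 0$ by (iii), this yields $e^{-f(s)}\geq e^{\tilde h(\rho)/(m-1)}>0$ on $[\rho,\infty[$. The exponent $(1-n)(m+q-1)/(m-1)=(2-m)(m+q-1)/(m-1)$ is non-positive, so from $w(s)\leq w(\rho)$ I would conclude $w^{(1-n)(m+q-1)/(m-1)}(s)\geq w^{(1-n)(m+q-1)/(m-1)}(\rho)>0$. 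Multiplying both bounds, the integrand is bounded below by a strictly positive constant on $[\rho,\infty[$, so the integral is infinite, and Theorem~\ref{parabolicity-sub-gen-q} delivers the $h$-parabolicity of $P$.

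The main point of the proof, rather than any computational obstacle, is recognising how the three hypotheses play complementary roles: $h$-minimality eliminates $\varphi$; the hypersurface codimension annihilates the coefficient of $\psi$ in the balance inequality; and then the sign condition on $w'$ and the sign condition on $\tilde h$ act independently to guarantee, respectively, the balance inequality and the pointwise lower bound on the two factors of the integrand in~\eqref{cond-para-sub-q}. Once these roles are identified, the verification is essentially algebraic.
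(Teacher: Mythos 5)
Your proposal is correct and follows essentially the same route as the paper: apply Theorem~\ref{parabolicity-sub-gen-q} with $\psi=h'$, $\varphi=0$, note that the codimension-one cancellation reduces the balance condition to $0\leq-(m+q-1)\,w'(r)/w(r)$, and bound the integrand in \eqref{cond-para-sub-q} below by a positive constant using $h\leq 0$ and the monotonicity of $w$. The only (harmless) difference is that you explicitly invoke Sard's theorem to ensure $\ptl D_\rho$ is smooth, a technical point the paper leaves implicit.
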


\begin{proof}
We apply Theorem \ref{parabolicity-sub-gen-q}. Since $h$ is radial and $P$ is $h$-minimal, we consider the functions $\psi(r):=h'(r)$ and
$\varphi(r):=0$. As $n=m-1$, the balance condition reads
\[
0\leq -(m+q-1)\,\frac{w'(r)}{w(r)},
\]
which holds on $P-D_\rho$ because $w'\leq 0$ in $[\rho, \infty)$. On the other hand
\begin{equation*}
\begin{aligned}
&\int_\rho^{\infty}w^{(1-n)\,\frac{m+q-1}{m-1}}(s)\,e^{-f(s)}\,ds
=e^{\frac{h(\rho)}{m-1}}\int_\rho^{\infty} w^{(2-m)\,\frac{m+q-1}{m-1}}(s)\,
e^{-\frac{h(s)}{m-1}}\,ds
\\ 
&\geq e^{\frac{h(\rho)}{m-1}}\,w^{(2-m)\,\frac{m+q-1}{m-1}}(\rho)\,\int_\rho^{\infty}e^{-\frac{h(s)}{m-1}}\,ds=\infty,
\end{aligned}
\end{equation*}
because $w$ is nonincreasing in $[\rho,\infty)$ and $h\leq 0$. So, condition \eqref{cond-para-sub-q} holds and $P$ is $h$-parabolic.
\end{proof}

\begin{remark}
Under the assumptions of Theorem~\ref{parabolicity-sub-gen-q} we can obtain a weighted volume comparison for extrinsic balls of submanifolds in the line of those in Theorems~\ref{th:simpson} and \ref{comp:qcapacity}. The details are left to the reader.
\end{remark}

\bibliographystyle{plain}

\def\cprime{$'$} \def\cprime{$'$} \def\cprime{$'$} \def\cprime{$'$}

 \end{document}